\documentclass[11pt,reqno]{article}
\usepackage[utf8]{inputenc}
\usepackage[a4paper, margin=1.5cm,top=3cm, bottom=3cm]{geometry}
\usepackage{amsmath, amsthm, amssymb, amsfonts}
\usepackage{mathtools}
\usepackage{todo}
\usepackage{hyperref}
\usepackage{enumitem}
\usepackage{setspace}
\usepackage{xcolor}
\usepackage{thmtools}
\usepackage{thm-restate}
\usepackage{verbatim}
\usepackage{enumitem}
\usepackage{enumitem} % for custom labels in lists
\usepackage{hyperref} % for clickable references
\usepackage{cleveref} % for smarter referencing (optional but nice)
\usepackage{tikz}
\usetikzlibrary{arrows.meta}
\usepackage{graphicx}
\usepackage{caption}
\usepackage{subcaption}
\usepackage[numbers,sort]{natbib} 

\newtheorem{theorem}{Theorem}[section]
\newtheorem{lemma}[theorem]{Lemma}
\newtheorem{claim}[theorem]{Claim}
\newtheorem{definition}[theorem]{Definition}
\newtheorem{corollary}[theorem]{Corollary}

\newtheorem{fact}[theorem]{Fact}

\newenvironment{claimproof}[1][\proofname]{\proof[#1]}{\endproof}

\title{Towards the  Lov\'{a}sz conjecture via sublinear expanders}
\author{Matija Buci{\'c}\thanks{Faculty of Mathematics, University of Vienna, Austria, and Department of Mathematics, Princeton University, USA. Email: \texttt{matija.bucic@univie.ac.at}. Research supported in part by NSF Award DMS-2349013.} \and Micha Christoph\thanks{Department of Mathematics, ETH Z\"{u}rich. Email: \texttt{micha.christoph@math.ethz.ch}. Research supported by SNSF Ambizione Grant No. 216071. } \and Alexey Pokrovskiy\thanks{Department of Mathematics, University College London, UK. Email: \texttt{a.pokrovskiy@ucl.ac.uk}.} \and Raphael Steiner\thanks{Department of Mathematics, ETH Z\"{u}rich. Email: \texttt{raphaelmario.steiner@math.ethz.ch}. Research supported by SNSF Ambizione Grant No. 216071.}}

\begin{document}
\maketitle
\begin{abstract}
Lov\'{a}sz' famous Hamiltonicity conjecture (1969) states that every connected vertex-transitive graph has a Hamiltonian path. A stronger version of the conjecture, often attributed to Thomassen (1978), states that every sufficiently large such graph even has a Hamiltonian cycle. Despite the great amount of attention these conjectures have attracted over the past decades both in the combinatorial and algebraic communities, for more than 40 years the best known lower bound for the maximum length of a cycle (path) in a connected vertex-transitive graph of order $n$ remained of the form $\Omega(\sqrt{n})$, due to Babai (1979). A series of recent works has successively improved the exponent in this lower bound further. In this paper, improving the previous state-of-the-art bound $\Omega(n^{9/14})$ due to Norin et al.~(2025), we prove that every connected vertex-transitive graph of order $n$ contains a cycle of length at least $n^{2/3-o(1)}$. This hits a natural barrier for several existing approaches from previous work.

Our proofs combine recent embedding techniques for paths in sublinear expanders, sublinear expander decompositions of almost-regular graphs, and several additional combinatorial ideas.
\end{abstract}
\section{Introduction}
\emph{Hamiltonicity} is one of the oldest and most fundamental notions in graph theory and has been studied since the beginnings of the subject.  A \emph{Hamilton} cycle (path) in a graph is a cycle (path) which covers all vertices, and a graph is called \emph{Hamiltonian} or \emph{traceable} if it contains a Hamilton cycle (path).  Apart from being a very natural object of study in combinatorics, Hamiltonicity also has strong ties to more applied algorithmic problems such as the \emph{Traveling Salesman Problem} in theoretical computer science. Deciding whether a graph has a Hamiltonian cycle (path) is one of the well-known 21 \textsc{NP}-hard problems from Karp's list of such problems~\cite{karp}, and thus easily checkable sufficient and necessary conditions for Hamiltonicity are of high interest and have been extensively studied. Probably the most famous such condition is Dirac's theorem from 1952, showing that a minimum degree bound of $n/2$ in an $n$-vertex graph is sufficient (and in general best possible) to guarantee a Hamilton cycle in a graph. While many generalizations of Dirac's condition have been discovered, there remains a surprising scarcity of sufficient conditions for Hamiltonicity that apply to very \emph{sparse} graphs of constant degree, say, and that in particular include the cycle graphs $C_n$ for $n\ge 3$.
One such potential condition which stands out in this sense was formulated conjecturally by Lov\'{a}sz in 1969~\cite{lovasz1969}. His conjecture, nowadays famous as the \emph{Lov\'{a}sz conjecture}, states that every connected vertex-transitive graph admits a Hamiltonian path. Vertex-transitive graphs (also known as symmetric graphs) can have a rich variety of structure and densities, and include large complicated classes of cubic graphs. Thus this condition, if true, would be quite orthogonal to the known sufficient conditions for Hamiltonicity that only apply to very dense graphs, such as Dirac's theorem. A well-known variant of Lov\'{a}sz' conjecture, often attributed to Thomassen (see~\cite{babai}) states that with finitely many exceptions, all connected vertex-transitive graphs even admit a Hamilton \emph{cycle}. Variants of Lov\'{a}sz' and Thomassen's conjectures, such as their restriction to the important special case of \emph{Cayley graphs} of finite groups, trace back even further, for instance to the work of Rankin~\cite{rankin} in 1948 and the work of Rapaport-Strasser in 1959~\cite{RS}. We remark in passing that also the corresponding questions for directed paths and cycles in vertex-transitive \emph{digraphs} are equally interesting, see~\cite{bucic} for a summary of the history of and some recent progress on these questions.

Lov\'{a}sz' and Thomassen's conjectures have attracted widespread attention in the last $50$ years, with many partial results on special cases, such as graphs of a particular order or Cayley graphs of specific groups, see~\cite{alspach81,kutnar,cayley-survey-84} for three surveys on these conjectures and~\cite{torsten,bedert,draganic,bradac} for some recent positive results confirming the conjecture for Kneser graphs and for moderately dense and random Cayley graphs, respectively. 

While these results all only apply to specific types of vertex-transitive graphs, such as those whose order is a particular type of number or those that correspond to specific Cayley graphs, \emph{general} positive results on the conjectures which apply to \emph{all} vertex-transitive graphs have remained few. Probably the most general such line of attack has been to give lower bounds on the function $\ell(n)$, defined as the maximum integer such that every connected vertex-transitive graph of order $n\ge 3$ contains a cycle of length at least $\ell(n)$. While the aforementioned stronger form of Lov\'{a}sz' conjecture claims that $\ell(n)=n$ for sufficiently large $n$, for over 40 years the best known lower bound remained stuck at $\ell(n)\ge\Omega(\sqrt{n})$ due to Babai~\cite{babai}. But recently, the problem has regained momentum with some exciting new advances. Namely, in a very nice paper from 2023, DeVos~\cite{devos} improved Babai's bound to $\ell(n)\ge \Omega(n^{3/5})$. This was further improved to $\ell(n)\ge \Omega(n^{13/21})$ by Groenland et al.~\cite{groen} and then to $\ell(n)\ge \Omega(n^{9/14})$ by Norin et al.~\cite{norin}, which has remained the state of the art thus far.
\paragraph{\textbf{Our results.}} In this paper, we improve the lower bound towards Lov\'{a}sz' and Thomassen's conjectures further to $\ell(n)\ge n^{2/3-o(1)}$, as formally stated in the following theorem.

\begin{theorem}\label{thm:mainlovasz}
Let $\varepsilon>0$. Then, for every sufficiently large number $n$, every connected vertex-transitive graph of order $n$ contains a cycle of length at least $n^{2/3-\varepsilon}$.
\end{theorem}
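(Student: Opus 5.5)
We will combine the classical structural approach for long cycles in vertex-transitive graphs with path-embedding techniques in sublinear expanders. Let $G$ be connected, vertex-transitive, $d$-regular on $n$ vertices, and let $L$ denote the length of a longest cycle. Babai's lemma says that any two longest cycles in a connected vertex-transitive graph intersect. This gives a covering argument: averaging over automorphisms, a fixed longest cycle $C$ meets the translates $\sigma(C)$, which forces $L^2 \gtrsim n$ in Babai's form. The refinements of DeVos, Groenland et al.\ and Norin et al.\ strengthen this intersection property. They show that two longest cycles must share many vertices (polynomially many in $n/L$), or else one can reroute through the intersection to find a longer cycle. We follow the same overall strategy and argue by contradiction. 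Suppose $L \le n^{2/3-\varepsilon}$. We then aim to build a cycle longer than $L$ out of several overlapping translates of a longest cycle.

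First we handle the degree. If $d \ge n^{1/3}$, say, then a longest path argument (Dirac-type: a longest path in a graph of minimum degree $d$ contains a cycle of length $>d$) together with connectivity of vertex-transitive graphs (which are at least $\frac{2}{3}(d+1)$-connected by Watkins/Mader) gives long cycles directly. More precisely, high connectivity combined with Babai-type intersection yields $L \gtrsim \sqrt{dn}$, which suffices in this range. So assume $d$ is small compared to $n^{1/3}$.

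Next, we apply a sublinear expander decomposition to the regular graph $G$. This partitions almost all edges into vertex-disjoint subgraphs $H_1,\dots,H_m$, each a sublinear expander of average degree comparable to $d$, with only an $n^{-o(1)}$-fraction of edges left over. Inside a sublinear expander on $h$ vertices, known path-embedding results give a path of length $h^{1-o(1)}$, and in fact a path connecting any two prescribed large sets. If some $H_i$ has $h \ge n^{2/3}$ vertices we are done immediately. Otherwise every piece is small. We would then use vertex-transitivity to transport the decomposition: the automorphism group acts transitively, so averaging gives each vertex a typical "piece" structure. The longest cycle $C$ must pass through many pieces, entering and leaving each through the sparse inter-piece edges.

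The key step, and the main obstacle, is to take a longest cycle $C$ and a suitable automorphic copy $\sigma(C)$ that meet in many vertices, as guaranteed by the averaging. At these meeting points we use the expander pieces as rerouting gadgets. Each piece is a robust connector: between any two large vertex sets inside a piece there is a short path avoiding a small forbidden set. We use these connectors to splice segments of $C$ and $\sigma(C)$ into a single cycle of length $>L$. The counting we would aim for is the following: $C$ visits about $L$ vertices, and by averaging a random translate hits $C$ in about $L^2/n$ vertices. Each intersection should be upgradable, via the expander connectivity, to a splice gaining length of order the piece size. The balance between piece size, number of intersections and the splice cost is exactly where the exponent $2/3$ should emerge, with the $o(1)$ loss coming from the polylogarithmic expansion. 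I expect the delicate part to be ensuring that the splicing paths inside the expanders avoid the already-used portions of both cycles. To handle this, I would try first to use the robust version of the expander path lemma, which allows deleting sets of size up to a sublinear fraction of the piece, and then choose the intersection points greedily so that they lie in distinct pieces.
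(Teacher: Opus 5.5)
Your proposal has a genuine gap. Its central step is a heuristic, not an argument, and applying a sublinear expander decomposition to $G$ itself cannot work for general vertex-transitive graphs.

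Nothing in the sketch explains why splicing a longest cycle $C$ with a translate $\sigma(C)$ at their roughly $L^2/n$ common vertices produces a cycle longer than $L$, or how the exponent $2/3$ arises. ``This is exactly where $2/3$ should emerge'' is not a computation.

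Worse, the expander pieces you rely on need not exist in usable form. The Letzter--Methuku--Sudakov decomposition lemma needs $d\ge 2\log n$, which fails e.g.\ for cubic Cayley graphs. In poorly expanding vertex-transitive graphs such as the torus $C_m\times C_m$, every $h$-vertex subgraph has a balanced separator of size $O(\sqrt h)$. Hence every sublinear-expander subgraph there has only polylogarithmically many vertices. So your ``otherwise every piece is small'' branch is the entire problem, and the pieces offer essentially no room for rerouting. A robust path lemma that tolerates deleting a sublinear fraction of a piece cannot avoid stretches of $C$ and $\sigma(C)$ that may fill most of a small piece. Likewise, the decomposition is not $\mathrm{Aut}(G)$-invariant, so ``transporting it by automorphisms'' has no content.

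The high-degree case is unsupported too. Your averaging gives $L^2/n\ge t$, where $t$ lower-bounds the intersection of two longest cycles. So $L\gtrsim\sqrt{dn}$ would require $t=\Omega(d)$, i.e.\ linear intersections of longest cycles in $k$-connected graphs, which is essentially Smith's open conjecture. The best known bound $\Omega(k^{2/3})$ only gives $L\gtrsim\sqrt{n}\,d^{1/3}$, which reaches $n^{2/3}$ only once $d\ge n^{1/2}$.

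The missing idea is the reduction the paper uses, which never searches for expanders inside $G$. For vertex-transitive $G$ one has $\ell\ge n/\mathrm{lpt}(G)-1$ (Groenland et al.). So it suffices to show $\mathrm{lpt}(G)\le\ell^{1/2+o(1)}$ for every connected graph $G$ with longest path length $\ell$. Watkins' connectivity bound and Bondy--Locke then convert the resulting path of length $n^{2/3-o(1)}$ into a cycle of comparable length.

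The transversal bound follows Norin et al. Take a shortest cycle $C$ meeting all longest paths. If $C$ is not geodetic, two separators of size at most $\sqrt{2\ell}$ form a transversal. If it is geodetic, $\mathrm{lpt}(G)\le 2k+1$ for the maximum order $k$ of a transaction on $C$. Theorem~\ref{thm: main} then yields a path through $k^{1-o(1)}$ of the transaction paths. By geodeticity their lengths sum to at least $|I|^2/2$, where $I$ indexes the transaction paths used, forcing $k\le\ell^{1/2+o(1)}$. This is the true source of the exponent.

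Expanders are used only to prove Theorem~\ref{thm: main}. There they are applied to the auxiliary graph obtained by contracting segments of $n^{\varepsilon}$ consecutive cycle vertices, which makes the matching graph $n^{\varepsilon}$-regular --- exactly the polynomial degree your direct approach lacks. The pieces are then glued together via the cycle-decomposition lemma (Lemma~\ref{lem: cycles}).
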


Importantly, Theorem~\ref{thm:mainlovasz} represents more than just another small nudge towards the conjectures\textbf{---}it actually achieves the maximum that was theoretically possible with the current approaches from previous work towards Lov\'{a}sz' and Thomassen's conjectures, and thus hits a natural barrier of the existing methods. To make this clear, in the next two paragraphs we summarize the two existing approaches towards improved lower bounds for $\ell(n)$ from the previous works~\cite{devos,groen,ma2025intersections,norin} which are all based on structural results about longest paths and cycles in general graphs, explain how our approach relates to these frameworks and how it exhausts the natural limits of both approaches. Along the way, we also state an additional result on longest path transversals implied by our proof.

\paragraph*{\textbf{Intersections of and hitting sets for longest paths and cycles.}} There are many fundamental open problems about basic properties of longest paths and cycles in graphs that remain wide open. The probably earliest question in this direction was raised already in 1966 by Gallai. To state the question formally, let us define a \emph{longest path (cycle) transversal} in a graph $G$ as a subset of vertices intersecting every longest path (cycle) in the graph, and let us denote by $\mathrm{lpt}(G), \mathrm{lct}(G)$, respectively, the minimum size of such a longest path (cycle) transversal in $G$. Motivated by the well-known folklore fact that any two longest paths in a connected graph must intersect, Gallai~\cite{gallai} asked whether every connected graph $G$ satisfies $\mathrm{lpt}(G)=1$. While this was shown to be false by Walther~\cite{walther}, the relaxed question~\cite{Zam76,Wal78} whether one can bound $\mathrm{lpt}(G)$ by an absolute constant for all connected graphs (and analogously, whether one can bound $\mathrm{lct}(G)$ by an absolute constant for all 
$2$-connected graphs) has remained open for 50 years. The best known upper bounds on these parameters in terms of the order of the graph are due to Norin et al.~\cite{norin}, who improved several previous bounds~\cite{Rau14,Lon21,Kie23} by showing that every connected $n$-vertex graph $G$ satisfies $\mathrm{lpt}(G)\le \sqrt{8n}$ and that every $2$-connected $n$-vertex graph $G$ satisfies $\mathrm{lct}(G)\le \sqrt{8n}$. 

Another fundamental question, which is attributed to Smith and can be traced back at least to 1980 (cf.~\cite{Groetschel84,Bondy95}), is the conjecture that in every $r$-connected graph with $r\ge 2$, every pair of longest cycles must intersect in at least $r$ vertices. The best known asymptotic lower bound on the intersection of two longest cycles in an $r$-connected graph was recently improved to $\Omega(r^{2/3})$ by Ma and Zhao~\cite{ma2025intersections}, improving another sequence of prior results~\cite{groen,Chen}.

Both of the above problems, i.e.\ upper bounds for longest path and cycle transversals as well as lower bounds for intersections of longest cycles, have direct applications to improving bounds for Lov\'{a}sz' and Thomassen's conjectures. To explain this, it is important to note that it was shown in~\cite[Proposition 5.1]{groen} that every connected vertex-transitive graph $G$ of order $n$ contains a cycle of length at least $\frac{n}{\mathrm{lct}(G)}$ and a path of length at least $\frac{n}{\mathrm{lpt}(G)}-1$. Hence, any progress on upper bounds for longest path and cycle transversals directly yields lower bounds for longest paths and cycles in connected vertex-transitive graphs. In fact, the aforementioned upper bounds of $\sqrt{8n}$ on longest path and cycle transversals due to Norin et al.~directly recover the well-known result of Babai~\cite{babai} that $\ell(n)\ge \Omega(\sqrt{n})$. However, as mentioned before, Norin et al.~in fact obtained a better lower bound of $\ell(n)\ge \Omega(n^{9/14})$. To do so, they derived another bound on longest path transversals in terms of the maximum length $\ell$ of a path rather than the order of the graph: They proved that $\mathrm{lpt}(G)\le O(\ell^{5/9})$ for every connected graph $G$ with maximum path length~$\ell$. Using the above-mentioned relationship that $\ell\ge \frac{n}{\mathrm{lpt}(G)}-1$ for connected vertex-transitive graphs, and some additional arguments (which we will repeat in the proof of Theorem~\ref{thm:mainlovasz} at the end of this section), one obtains the desired lower bound of $\ell(n)\ge \Omega(n^{9/14})$ from this approach. Norin et al. (see the discussion in the conclusion of~\cite{norin}) point out that the maximum limit achievable using methods similar to theirs would be to unify both of their bounds of the form $\mathrm{lpt}(G)\le O(\sqrt{n})$ and $\mathrm{lpt}(G)\le O(\ell^{5/9})$ to a bound of the form $\mathrm{lpt}(G)\le O(\sqrt{\ell})$, which with the same approach would yield a lower bound of $\ell(n)\ge \Omega(n^{2/3})$ towards Lov\'{a}sz' and Thomassen's conjectures. In this paper, we prove an asymptotic version of this goal. 

\begin{theorem}\label{thm:smallhittingset}
Let $G$ be a connected graph on at least two vertices and let $\ell$ denote the maximum length of a path in $G$. Then, $\mathrm{lpt}(G)\le \ell^{1/2+o(1)}$. 
\end{theorem}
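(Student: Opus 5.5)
We argue by contradiction via a structural dichotomy: either a set of $\ell^{1/2+o(1)}$ vertices meets all longest paths, or $G$ contains a path longer than $\ell$. Fix a longest path $P$ and put $L = \ell^{1/2}\cdot \ell^{o(1)}$. Following Norin et al., we look for a small set $S$ hitting every longest path. The natural candidate consists of high-degree vertices together with a sparse ``net'' of $L$ roughly equally spaced vertices on $P$. Indeed, any longest path $Q$ must intersect $P$, since two longest paths in a connected graph meet. Moreover, if $Q$ avoids $S$, then $Q$ meets $P$ only inside segments of length about $\ell/L \approx \sqrt{\ell}$ between consecutive net vertices.

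The first step is to reduce to a bounded-degree and almost-regular situation. Suppose $\mathrm{lpt}(G)$ is large. Then there are many pairwise ``disjoint enough'' longest paths, and their union $H$ has at most $O(\ell\cdot\mathrm{lpt})$ vertices. After discarding vertices of degree above $\ell^{o(1)}$ (these we would add to the transversal, and there are few such vertices meeting the relevant paths), $H$ is sparse, with average degree at least $2-o(1)$ and typically considerably more structure. We then apply a sublinear expander decomposition for almost-regular graphs: after passing to an almost-regular subgraph of average degree $d$, we find a robust sublinear expander $F$ on $m$ vertices. In such expanders, recent path-embedding techniques give paths of length about $m/\mathrm{polylog}$, or at least $m^{1-o(1)}$ in the sublinear regime. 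If the expander has more than $\ell^{1+o(1)}$ vertices, this already contradicts maximality of $\ell$.

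The key combinatorial step is therefore to show that if no transversal of size $\ell^{1/2+o(1)}$ exists, then the family of longest paths avoiding $S$ forces such a large expander. The idea is as follows. Each longest path avoiding $S$ spends its $\ell$ vertices in pieces of length at most about $\sqrt\ell$ near $P$ and in long excursions off $P$. Take $\sqrt{\ell}$ of these paths, chosen greedily so that each avoids the previous ones enlarged by a small transversal. Their union then contains roughly $\ell^{3/2}$ vertices, and it is connected through $P$ in a ``many-to-many'' way. Standard rerouting (the folklore argument that two longest paths meet, iterated) shows the union cannot contain two far-apart long paths joined by disjoint connectors. Equivalently, the union has no sparse cuts of the kind that would be cut by few vertices, for otherwise those few vertices would form a small transversal. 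Without small separators the union contains a sublinear expander on $\ell^{1+\Omega(1)}$ vertices, which yields a path longer than $\ell$, a contradiction.

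The main obstacle is this middle step: converting ``no small longest-path transversal'' into ``a large sublinear expander with controlled degrees''. Expansion in sublinear expanders only holds for sets of size below about half the graph, and we need the expander to be genuinely larger than $\ell^{1+\delta}$. I would first try an iterative separator argument. Repeatedly find a set of $\ell^{o(1)}$ vertices separating a small piece of the union of paths, add it to $S$, and charge the cost to the number of vertices of $P$ or of the longest paths it cuts. The error terms from the degree truncation and the $\mathrm{polylog}$ losses are exactly what produce the $\ell^{o(1)}$ factor in the bound $\mathrm{lpt}(G)\le \ell^{1/2+o(1)}$.
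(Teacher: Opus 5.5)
\textbf{There is a genuine gap.} The step you yourself flag as the main obstacle is turning ``no longest path transversal of size $\ell^{1/2+o(1)}$'' into ``a sublinear expander on $\ell^{1+\Omega(1)}$ vertices''. That step is the entire content of the theorem, and the mechanism you sketch cannot deliver it.

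\emph{The separator scales do not match.} Your separator argument can at best exclude vertex cuts of size up to the transversal budget $\ell^{1/2+o(1)}$, since a larger cut is not a ``small transversal''. The word ``equivalently'', which identifies longest-path transversals of $G$ with separators of the union, is also unsupported. A $(\beta,c,s)$-expander on $m=\ell^{1+\Omega(1)}$ vertices must satisfy $|N(U)|\ge \beta|U|/(\log m)^c$ for all $|U|\le 2m/3$. So it can have no cut of size below roughly $m/(\log m)^c$ that separates off a linear-size set. Excluding cuts of size $\ell^{1/2+o(1)}$ only controls the expansion of sets of size about $\ell^{1/2+o(1)}$, which gives paths nowhere near length $\ell$.

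\emph{The union is too sparse for the tools you cite.} By your own count, the union of $\sqrt{\ell}$ longest paths has at most $\ell^{3/2}$ edges on about $\ell^{3/2}$ vertices, so its average degree is about $2$. The decomposition of Lemma~\ref{lem: exapnder decomposition} needs $d\ge 2\log n$. The linking Lemma~\ref{lem: expander paths} needs robustness $s\ge (\log n)^{9c+21}/q^{10}$, and robustness forces $\delta>s$ (delete all edges at one vertex).

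\emph{Two further steps fail.} Any two longest paths in a connected graph intersect, so they cannot be chosen ``each avoiding the previous ones''. Nothing then forces their union to have $\ell^{3/2}$ vertices. Also, high-degree vertices cannot simply be added to the transversal. In a star of stars, $\ell=4$ and $\mathrm{lpt}=1$, yet unboundedly many vertices of large degree lie on longest paths.

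\textbf{The missing idea} is the reduction that produces both a dense structure and the square root. The paper first uses Norin et al.\ to take a shortest cycle $C$ meeting all longest paths.
\begin{itemize}
\item If $C$ is not geodetic, a shortcut path splits $C$ into two shorter cycles $C_1,C_2$. Each is avoided by some longest path $L_i$, and Lemma~\ref{lem:a} gives separators of size $\sqrt{2\ell}$ whose union is a transversal.
\item If $C$ is geodetic, Lemma~\ref{lem:c} gives $\mathrm{lpt}(G)\le 2k+1$, where $k$ is the largest order of a transaction $\{P_1,\dots,P_k\}$ across $C$. The transaction together with $C$ is exactly the cycle-plus-matching setting of Theorem~\ref{thm: main}. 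So some path in $G$ contains $|I|\ge k^{1-o(1)}$ of the $P_i$. Geodeticity and Lemma~\ref{lem:disjointpairs} then give $\ell\ge\sum_{i\in I}\mathrm{dist}_C(a_i,b_i)\ge |I|^2/2$, hence $k\le \ell^{1/2+o(1)}$.
\end{itemize}
Expanders enter only inside Theorem~\ref{thm: main}. There, intervals of length $n^{\varepsilon}$ of the cycle are contracted so that the matching becomes an $n^{\varepsilon}$-regular multigraph. That contraction supplies exactly the degree your union-of-paths graph lacks. The exponent $1/2$ comes from the quadratic distance bound, not from a net on a fixed longest path.
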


Theorem~\ref{thm:mainlovasz} is a direct consequence of this result, and we would like to demonstrate the quick deduction before moving on to describing the second approach for lower bounding $\ell(n)$ related to Smith's conjecture.

\begin{proof}[Proof of Theorem~\ref{thm:mainlovasz}, assuming Theorem~\ref{thm:smallhittingset}]
Let $\varepsilon>0$ and let $n\in \mathbb{N}$ be sufficiently large as a function of $\varepsilon$. Let $G$ be a connected vertex-transitive graph of order $n$, and let $\ell$ denote the maximum length of a path in $G$. Then, by Theorem~\ref{thm:smallhittingset} we have $\mathrm{lpt}(G)\le \ell^{1/2+o(1)}$. Furthermore, by \cite[Proposition~5.1]{groen} we have $\ell\ge \frac{n}{\mathrm{lpt}(G)}-1$. It follows that
$$n\le (\ell+1)\mathrm{lpt}(G)\le \ell^{3/2+o(1)},$$ and hence $\ell\ge n^{2/3-o(1)}\ge \frac{5}{2}n^{2/3-\varepsilon}$, using that $n$ is chosen sufficiently large. 

Now, let $\ell'$ denote the maximum length of a cycle in $G$ and let $d\ge 2$ denote the degree of some (and thus every) vertex in $G$. By a result of Watkins~\cite{watkins}, we have that $G$ is $(\lfloor 2d/3\rfloor+1)$-connected. Hence, either $d=2$ in which case $G$ must be isomorphic to $C_n$ and hence satisfies $\ell'=n\ge n^{2/3-\varepsilon}$, or we have that $G$ is $3$-connected. In the latter case, a result of Bondy and Locke~\cite{bondy} guarantees that $\ell'\ge \frac{2}{5}\ell\ge n^{2/3-\varepsilon}$. Hence, in each case we obtain that $G$ contains a cycle of length at least $n^{2/3-\varepsilon}$, as claimed in the statement of the theorem. This concludes the proof.
\end{proof}

For Smith's conjecture, the connection to bounds on $\ell(n)$ is more subtle. In fact, a direct relation only exists to a natural ``local'' variant of Smith's problem: For every $k\in \mathbb{N}$, let us denote by $s(k)$ the smallest integer such that for every pair $C_1, C_2$ of longest cycles in a graph intersecting in at most $k$ vertices, there exists a vertex-separator of size at most $s(k)$ in $G$ that separates $C_1$ from $C_2$. Clearly, if $s(k)< r$ for some values of $k, r\in \mathbb{N}$ then any two longest cycles in an $r$-connected graph intersect in more than $k$ vertices. In fact, all the progress towards Smith's conjecture thus far has worked in this local setting and has proved corresponding upper bounds for $s(k)$, culminating in the aforementioned state of the art bound due to Ma and Zhao~\cite{ma2025intersections} who obtained $s(k)=O(k^{3/2})$. The trivial lower bound for $s(k)$ is $s(k)\ge k$, and it seems possible that $s(k)$ grows linearly in $k$. 

The connection between this ``local'' variant of Smith's conjecture and lower bounds on $\ell(n)$ is as follows: Groenland et al.~\cite{groen} implicitly proved that any upper bound of the form $s(k)=O(k^\alpha)$ for some $\alpha\in [1,\infty)$ immediately yields a lower bound of $\ell(n)\ge \Omega(n^{(1+\alpha)/(1+2\alpha)})$ for long cycles in connected vertex-transitive graphs. Hence, the hard limit for a lower bound on $\ell(n)$ that could potentially be derived from this approach is $\ell(n)\ge \Omega(n^{2/3})$. Our Theorem~\ref{thm:mainlovasz} asymptotically achieves this theoretical limit via a different approach.

\paragraph*{\textbf{Key technical result and overview.}} 

To prove Theorem~\ref{thm:smallhittingset} and thus our main result Theorem~\ref{thm:mainlovasz}, we improve a key lemma from the previous work of Norin et al.~\cite{norin} about finding a path using many edges of a matching spanned within a cycle. Concretely, we prove the following asymptotically tight result.

\begin{theorem}\label{thm: main}
    For every fixed $\varepsilon>0$ and every sufficiently large $n\in\mathbb{N}$, the following holds. Let $L$ be a cycle with cyclic vertex-sequence $v_1,\ldots,v_{2n}$ and $M$ a perfect matching between $v_1,\ldots,v_n$ and $v_{n+1},\ldots,v_{2n}$. Then, there exists a path in the graph $L\cup M$ which uses at least $n^{1-\varepsilon}$ edges of $M$.
\end{theorem}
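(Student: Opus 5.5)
The natural strategy is to compress the cycle $L$ into blocks, so that $M$ becomes a dense almost-regular auxiliary graph. In that graph a long path is available from sublinear-expander embedding results, and it lifts back to a path in $L\cup M$ using one $M$-edge per visited block. Since one level of compression only recovers paths of polynomially smaller length, I would iterate over about $1/\varepsilon$ scales, losing a factor $n^{O(\varepsilon^2)}$ per scale.

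\textbf{Lifting from one scale.} Fix a block size $s=n^{\delta}$ with $\delta\ll\varepsilon$. Partition $v_1,\dots,v_n$ and $v_{n+1},\dots,v_{2n}$ into consecutive intervals of length $s$. Let $G_s$ be the multigraph whose $N=2n/s$ vertices are these intervals, with one edge $IJ$ for each $M$-edge between $I$ and $J$. Every vertex of $G_s$ has degree exactly $s$, so $G_s$ is $s$-regular on $N$ vertices. The lifting observation is this: suppose $I_1I_2\cdots I_t$ is a path in $G_s$ realised by $M$-edges $x_1y_2, x_2y_3,\dots$ with $y_j,x_j\in I_j$. Then entering $I_j$ at $y_j$ and walking along $L$ inside $I_j$ to $x_j$ gives a path in $L\cup M$ using $t-1$ edges of $M$, because distinct intervals are disjoint arcs of $L$. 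So it suffices to find, possibly after refining the scale, a path of length $n^{1-\varepsilon}$ in a suitable auxiliary graph of this kind. Working with intervals rather than single vertices also lets us reuse an interval at a finer scale later.

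\textbf{Expander decomposition and long paths.} I would apply the sublinear expander decomposition for almost-regular graphs to $G_s$. This writes $G_s$, after deleting a small fraction of edges, as a vertex-disjoint union of sublinear expanders $H_1,\dots,H_m$, each of average degree $\Omega(s)$ and almost regular. In each $H_i$ the path-embedding technique for sublinear expanders (robust extendability plus rotation/connecting via small expanding balls) should give a path covering $|H_i|^{1-o(1)}$ vertices. If some $H_i$ has $|H_i|\ge N^{1-\varepsilon/2}$, we lift it and are done. Otherwise every expander piece is small, of size at most $N^{1-\varepsilon/2}$ but with degree about $s$. In that case the vertex sets $V(H_i)$, viewed as unions of intervals, form clusters in which most $M$-edges stay inside a cluster.

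\textbf{Recursion and gluing — the main obstacle.} In the clustered case I would pass to a finer scale: restrict attention to the $M$-edges inside one cluster, subdivide its intervals into blocks of size $s'=s/n^{\delta}$, and repeat. After about $1/\delta$ rounds we reach single vertices, where a path in the auxiliary graph is literally a path in $L\cup M$. The difficulty is that the clusters at a given scale are individually too small. We must therefore chain many cluster-paths together, using the cycle edges of $L$ (consecutive intervals) and the few inter-cluster $M$-edges as connectors, while keeping everything vertex-disjoint in $L$.

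My first attempt at this would be a potential argument. Define $f(n)$ as the guaranteed number of $M$-edges, and show that a cluster structure at scale $s$ yields a path that passes through $\gg N/|{\rm cluster}|$ clusters in the order they appear along $L$; the cycle adjacencies make the cluster-quotient graph contain a long walk that can be shortcut. Inside each of these clusters we use an inductively obtained path, which gives a recursion like
\[
f(n)\ \ge\ n^{-O(\delta)}\,\frac{N}{k}\,f(k s)
\]
for cluster size $k$. This inequality solves to $f(n)\ge n^{1-O(\sqrt{\delta})}$. Making this gluing step rigorous is where I expect the real work to be: consecutive cluster-paths must attach at compatible endpoints. For that I would rely on the flexibility of expander paths, namely that the endpoints can be chosen within a large set, via the standard ``path with prescribed endpoint in any large set'' lemma for sublinear expanders.
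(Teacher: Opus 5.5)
There is a genuine gap. It lies in the gluing step you defer, and the mechanism you propose for it is false, not merely unproven.

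The quotient of $L$ by the clusters can be any connected Eulerian multigraph, including a tree-like one in which every path meets few clusters. For instance:
\begin{itemize}
\item let each half of $L$ follow an Euler tour of a rooted tree of clusters with branching $b=N^{\varepsilon/4}$ and depth about $4/\varepsilon$, each cluster consisting of $\Theta(b)$ intervals;
\item let $M$ match every cluster internally, first half to second half, so that its interval graph is a good expander.
\end{itemize}
Then $G_s$ is a disjoint union of these clusters, and every expander piece has $O(N^{\varepsilon/4})$ vertices, far below your threshold $N^{1-\varepsilon/2}$. Every path in the cluster quotient visits only $O(1/\varepsilon)$ clusters. So any path that visits each cluster once uses only $n^{\varepsilon/4+O(\delta)}$ edges of $M$, however well the recursion inside clusters works. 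The star case, where $L$ alternates between one central cluster and many leaf clusters, already shows the phenomenon.

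A long path does exist in such graphs, but it must enter and leave the root cluster, and every internal cluster, about $b$ times. Neither of your tools produces this. A recursively obtained path inside a cluster has one entry and one exit. The ``prescribed endpoint in a large set'' lemma only makes endpoints flexible, whereas here the attachment points are fixed boundary vertices dictated by $L$. There are many of them per cluster, and they must be joined in specified pairs by vertex-disjoint paths.

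The missing idea, which is what the paper does, is to traverse each expander piece many times via \emph{simultaneous linkages}, organized by the Eulerian structure of $L$ contracted along the pieces.
\begin{itemize}
\item The paper fixes a cycle decomposition $\mathcal C$ of this quotient. Lemma~\ref{lem: cycles}, via the loaded induction of Lemma~\ref{lem: root cycles}, gives a subcollection $\mathcal C^*$ with $\Delta(\mathcal C^*)\le 6k$ such that $\mathcal C^*\cup\mathcal C_p$ has a component of weight at least $W^{1-\delta}/2$ with probability at least $W^{-\delta}/2$, where $W$ is the total number of intervals in the pieces.
\item Every piece then has at most $6k$ deterministic attachment points plus a $2\sqrt p$-random set of them (Lemma~\ref{lem: coupling}).
\item This is exactly what the sublinear-expander linkage lemma (Lemma~\ref{lem: expander paths}, packaged as Lemma~\ref{lem: expander connections}) tolerates. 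It joins the attachment points in any prescribed maximum-degree-two pattern so that everything merges into a single cycle, while absorbing a random $n^{-\gamma}$-fraction of the piece.
\item Contracting the pieces one at a time (Claim~\ref{clm: iterative}) yields a cycle with at least $n^{-\varepsilon}\sum_{G\in\mathcal R}|V(G)|\ge n^{1-3\varepsilon}$ edges of $M$.
\end{itemize}
This works at a single scale, because each piece only needs to contribute an $n^{-\varepsilon}$-fraction of its intervals rather than a near-spanning path. Your multi-scale recursion is therefore unnecessary. It is also not self-similar: a cluster with its internal $M$-edges is a union of disjoint arcs of $L$ with a non-perfect matching, not an instance of the theorem.

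Finally, $G_s$ is a multigraph whose underlying simple graph can be far from regular. Before invoking the expander decomposition you need something like Claim~\ref{clm:simple}, which uses the contradiction hypothesis to show that few interval pairs carry two $M$-edges.
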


Norin et al.~\cite[Lemma 8]{norin} proved an analogous result, but only for the fixed value $\varepsilon=0.2$. The main novelty of this paper lies in proving this key result (Theorem~\ref{thm: main}). The reduction to Theorem~\ref{thm: main} has already been observed in the work of Norin et al. Nevertheless, for the sake of completeness of the presentation and since there are a few formal differences, we decided to include the complete deduction of Theorem~\ref{thm:smallhittingset} from Theorem~\ref{thm: main} in the appendix of this paper. 

Our proof of Theorem~\ref{thm: main} combines recent embedding techniques for paths in sublinear expanders as well as the tool of expander decompositions of almost-regular graphs from the recent work of Letzter et al.~\cite{expanders} with an array of new combinatorial ideas to glue many short path segments into one big path which uses many edges of the matching $M$. In Section~\ref{sec:aux} we prepare the proof of our main result, Theorem~\ref{thm: main}, with several auxiliary results about embedding vertex-disjoint paths into sublinear expanders. In Section~\ref{sec:keylemma} we then prove a key lemma about cycle decompositions of Eulerian graphs, which will later be instrumental for proving Theorem~\ref{thm: main} (which is done in Section~\ref{sec:thm}): Here, given as input a graph $L\cup M$ as in the statement of Theorem~\ref{thm: main}, we first contract some segments of the cycle $L$ into single vertices to increase the average degree of the graph at hand to be polynomial in $n$. We then apply the tools from Section~\ref{sec:aux} to decompose this auxiliary graph into disjoint sublinear expander pieces, within which we can find suitable collections of disjoint paths, exhausting a significant amount of vertices of the expander pieces. Finally, we use the technical lemma from Section~\ref{sec:keylemma} to glue these paths from different pieces together into one path in the auxiliary graph such that it expands to a path in the original graph $L\cup M$ using many edges of $M$. This overview is extremely oversimplified, but we still think it is helpful to keep it in mind as a guide when reading through the following sections.
\paragraph*{\textbf{Notation and Terminology.}} 
Throughout, we denote by $\log x$ the \emph{natural} logarithm of a real number $x>0$. By $[k]:=\{1,\ldots,k\}$, we denote the set of the first $k$ positive integers. With a slight abuse of notation, given two positive real constants $\varepsilon_1, \varepsilon_2\in (0,1)$, we write $\varepsilon_1 \ll \varepsilon_2$ to indicate that for some (implicit but fixed) function $f:(0,1)\rightarrow (0,1)$, we assume that $\varepsilon_1<f(\varepsilon_2)$.

Given a graph $G$, we denote by $V(G)$ and $E(G)$ its vertex- and edge-set, respectively. For a subset of vertices $X$, we denote by $G[X]$ and $G-X$ the subgraph induced by $X$ and obtained by deleting $X$, respectively, and we use analogous notation for deletion of edge sets. We also denote by $N_G(X):=\{u\in V(G)\setminus X|\exists v\in X: uv\in E(G)\}$ the \emph{external neighborhood} of the set $X$ and simply write $N_G(v)=N_G(\{v\})$ for the neighborhood of a single vertex, as usual. 
We use the notation $\delta(G),\Delta(G),d(G)$ to denote the minimum, maximum and average degree of a graph $G$, respectively.
For two disjoint subsets $A,B\subseteq V(G)$, we denote by $e_G(A,B)$ the number of edges in $G$ with one end in $A$ and one end in $B$.

Given a finite set $V$ and a probability value $q\in [0,1]$, we will use the terminology \emph{$q$-random subset} to refer to a random subset of $V$ obtained by including each element with probability $q$, independently.

All graphs in this paper, unless explicitly referred to as multigraphs, are considered simple. In multigraphs, we do allow loops and multiple parallel edges between a pair of vertices. The degree of a vertex is then counted with multiplicities; in particular, a loop at a vertex contributes twice to the degree count at that vertex. In particular, this means that a multigraph of maximum degree at most $2$ consists of a disjoint union of paths and cycles, where cycles are allowed to have length two (two parallel edges) or one (a loop).

\section{Auxiliary results}\label{sec:aux}
In this section, we prepare the proof of our main technical result, namely Theorem~\ref{thm: main}, with several auxiliary results mostly on sublinear expanders. Most of these results will be deduced from prior work on sublinear expanders, in particular~\cite{expanders}. We start with the following central definition, which will be used repeatedly.
\begin{definition}
    Let $\beta,c,s>0$. A graph $H$ is called a \emph{$(\beta,c,s)$-expander} if, for every $U\subseteq V(H)$ and $F\subseteq E(H)$ with $1\leq|U|\leq \frac{2}{3}|V(H)|$ and $|F|\leq s|U|$, we have
    \begin{align*}
        |N_{H-F}(U)|\geq \frac{\beta}{(\log |V(H)|)^c}\cdot |U|.
    \end{align*}
\end{definition}
Our first lemma is a simple fact that can be straightforwardly deduced from the previous definition.
\begin{lemma}\label{lem: can remove}
    Let $0<\alpha<\varepsilon<1$ and $c>0$. Then, the following holds for every large enough $n\in \mathbb{N}$. Let $G$ be a $(1/8,c,s)$-expander with at most $n$ vertices satisfying $\delta(G)\geq n^{\varepsilon}/3$ and $s\le n^\varepsilon /100$. Let $X$ be an arbitrary set of at most $n^{\alpha}$ vertices. Then, $G-X$ is a $(1/16,c,s)$-expander.
\end{lemma}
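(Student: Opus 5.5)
Let $G':=G-X$, $N:=|V(G)|$ and $N':=|V(G')|=N-|X|$. Fix $U\subseteq V(G')$ and $F\subseteq E(G')$ with $1\le |U|\le \frac23 N'$ and $|F|\le s|U|$; we must show $|N_{G'-F}(U)|\ge \frac{1}{16(\log N')^c}|U|$. The plan is to split according to the size of $U$ compared with $n^{\alpha}$, since removing $X$ costs at most $|X|\le n^{\alpha}$ neighbours. Throughout we use $N\ge \delta(G)\ge n^{\varepsilon}/3$, so $N'\ge N-n^{\alpha}\ge N/2$ and $\log N'\le \log N$. Hence it suffices to show $|N_{G'-F}(U)|\ge \frac{1}{16(\log N)^c}|U|$.

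\emph{Large $U$.} Suppose $|U|\ge 16(\log N)^c n^{\alpha}$. Since $|U|\le \frac23 N'\le \frac23 N$ and $F\subseteq E(G)$, the expansion of $G$ applies to the pair $(U,F)$ and gives $|N_{G-F}(U)|\ge \frac{1}{8(\log N)^c}|U|$. Every vertex of $N_{G-F}(U)$ outside $X$ lies in $N_{G'-F}(U)$, so
$$|N_{G'-F}(U)|\ge \frac{|U|}{8(\log N)^c}-n^{\alpha}\ge \frac{|U|}{16(\log N)^c}.$$

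\emph{Small $U$.} Suppose $|U|< 16(\log N)^c n^{\alpha}\le n^{\alpha'}$ for some fixed $\alpha'$ with $\alpha<\alpha'<\varepsilon$; this holds for large $n$ because $\log N\le \log n$. Here we use the minimum degree directly rather than expansion. The number of edges of $G'$ with both ends in $U$ is at most $|U|^2/2$. The edges from $U$ to $X$ number at most $|U|\,n^{\alpha}$. The edges of $F$ number at most $s|U|\le |U|n^{\varepsilon}/100$. Therefore the number of edges of $G'-F$ from $U$ to $V(G')\setminus U$ is at least
$$|U|\Big(\frac{n^{\varepsilon}}{3}-\frac{n^{\alpha'}}{2}-n^{\alpha}-\frac{n^{\varepsilon}}{100}\Big)\ge |U|\frac{n^{\varepsilon}}{4}.$$
This count only lower-bounds edges, not vertices. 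To convert it, pick any single vertex $u\in U$: it has at least $n^{\varepsilon}/3-|U|-n^{\alpha}$ neighbours in $G'$ outside $U$.

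The main obstacle is that $F$ could delete all of the edges at $u$, since $s|U|$ may exceed $\deg(u)$. We handle this by averaging. The edges of $F$ total at most $|U|n^{\varepsilon}/100$, so some $u\in U$ is incident to at most $n^{\varepsilon}/100$ edges of $F$. That $u$ then has at least $n^{\varepsilon}/3-n^{\alpha'}-n^{\alpha}-n^{\varepsilon}/100\ge n^{\varepsilon}/4$ neighbours in $N_{G'-F}(U)$. Since $|U|<n^{\alpha'}$, for large $n$ we get $n^{\varepsilon}/4\ge |U|\ge \frac{|U|}{16(\log N')^c}$, which finishes this case.

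Both cases give the required bound, so $G'$ is a $(1/16,c,s)$-expander.
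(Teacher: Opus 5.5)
Your case split (small $U$ via the minimum degree, large $U$ via the expansion of $G$ minus the at most $n^{\alpha}$ vertices of $X$) follows the paper's strategy; the paper splits at $|U|=n^{\varepsilon}/6$ instead of at $16(\log N)^c n^{\alpha}$. However, the reduction in your first paragraph runs the wrong way. From $\log N'\le\log N$ you get $\frac{|U|}{16(\log N')^c}\ge\frac{|U|}{16(\log N)^c}$. So a lower bound of $\frac{|U|}{16(\log N)^c}$ on $|N_{G'-F}(U)|$ is \emph{weaker} than what the definition of a $(1/16,c,s)$-expander for the $N'$-vertex graph $G'$ requires.

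This is not cosmetic, because your large-case threshold has no slack. For $|U|$ at (or just above) $16(\log N)^c n^{\alpha}$, your estimate $\frac{|U|}{8(\log N)^c}-n^{\alpha}$ is only about $n^{\alpha}$. The required quantity is about $n^{\alpha}(\log N/\log N')^c$, which is strictly larger as soon as $X$ meets $V(G)$. Your small case is unaffected, since there you verify the bound with $\log N'$ directly.

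The repair needs a \emph{lower} bound on $\log N'$, and your own observation $N'\ge N/2$ supplies it. It gives $\log N'\ge\log N-\log 2$, and since $N\ge n^{\varepsilon}/3\to\infty$, this yields $(\log N')^c\ge\frac23(\log N)^c$ for large $n$. Now raise the threshold to $32(\log N)^c n^{\alpha}$. The large case then gives $\frac{|U|}{8(\log N)^c}-n^{\alpha}\ge\frac{3|U|}{32(\log N)^c}\ge\frac{|U|}{16(\log N')^c}$. The small case still applies because $32(\log N)^c n^{\alpha}\le n^{\alpha'}$ for large $n$. The paper's split at $n^{\varepsilon}/6$ avoids the issue altogether by leaving polynomial slack in the large case.

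Two smaller remarks on your small case:
\begin{itemize}
\item The averaging should be over the $F$-edges \emph{leaving} $U$. Each such edge has exactly one end in $U$, so some $u$ is incident to at most $|F|/|U|\le s$ of them. In your version, edges of $F$ inside $U$ are counted twice, and the claim as stated can fail.
\item Your edge count $e_{G'-F}(U,V(G')\setminus U)\ge|U|n^{\varepsilon}/4$ already suffices on its own. Each vertex outside $U$ has at most $|U|$ neighbours in $U$, so dividing by $|U|$ (as the paper does) gives $|N_{G'-F}(U)|\ge n^{\varepsilon}/4$ without choosing a special vertex.
\end{itemize}
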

\begin{proof}
    Let $U\subseteq V(G)\setminus X$ and $F\subseteq E(G-X)$ be arbitrary sets with $1\leq |U|\leq \frac{2}{3}|V(G-X)|$ and $|F|\leq s|U|$. Suppose first that $|U|\leq n^{\varepsilon}/6$. By the minimum degree condition, it follows that $e_G(U,V(G)\setminus U)\geq n^{\varepsilon}|U|/6$. Therefore, $e_{G-F}(U,V(G)\setminus U)\geq n^{\varepsilon}|U|/7$. But every vertex has at most $|U|$ neighbors in $U$, and it follows that $|N_{G-F}(U)|\geq \frac{e_{G-F}(U,V(G)\setminus U)}{|U|}\ge  n^{\varepsilon}/7$. Hence, $|N_{(G-X)-F}(U)|=|N_{G-F}(U)\setminus X|\geq n^{\varepsilon}/8\ge \frac{1}{16(\log |V(G-X)|)^c}|U|$, as desired. Next, suppose that $|U|>n^{\varepsilon}/6$. Then, using $|X|\le n^{\alpha}$ and $\alpha<\varepsilon$, we obtain, for sufficiently large $n$, 
    $$|N_{(G-X)-F}(U)|=|N_{G-F}(U)\setminus X|\geq\frac{1}{8(\log |V(G)|)^c}\cdot |U|-n^{\alpha}\geq \frac{1}{16(\log |V(G-X)|)^c}\cdot |U|.$$ 
    This concludes the proof that $G-X$ is a $(1/16,c,s)$-expander.
\end{proof}
Next, we shall state one of the main technical ingredients from~\cite{expanders}: A lemma guaranteeing in any near-regular graph $G$ a collection of vertex-disjoint near-regular sublinear expanders which cover almost all the vertices of $G$.
\begin{lemma}[\cite{expanders}, Lemma 4.1 applied with $\alpha =1$]\label{lem: exapnder decomposition}
    Let $\varepsilon, C, n, d$ satisfy $d \ge 2 \log n$, $0 \le \varepsilon \le (\log n)^{-(C-1)}$, $C \ge 57$ and let $n$ be large enough. Let $c = \frac{C(C-1)}{C-29}$. 

	Suppose that $G$ is an $n$-vertex graph with $\Delta(G) \le d$ and $d(G) \ge d(1- \varepsilon)$. Then, there is a collection $\mathcal{H}$ of vertex-disjoint subgraphs of $G$ such that every $H \in \mathcal{H}$ is a $(\frac{1}{8}, c, s_H)$-expander satisfying $d(H) \ge d(1 - \varepsilon_H)$ and $\delta(H) \ge d(H)/2$, where $s_H \coloneqq \frac{d}{4 (\log |V(H)|)^c}$, and $\varepsilon_H \coloneqq (\log |V(H)|)^{-(C-29)}$. Moreover, $\sum_{H \in \mathcal{H}} |V(H)| \ge (1 - \frac{(\log \log \log n)^2}{\log \log n})n$. 
\end{lemma}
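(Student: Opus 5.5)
The statement is Lemma~4.1 of~\cite{expanders} specialised to $\alpha=1$, so the proof consists of two parts. First, check that the hypotheses here are exactly the ones that lemma needs. Second, record why the conclusion has the stated form. Concretely, I would restate Lemma~4.1 with $\alpha=1$. The conditions $d\ge 2\log n$, $0\le \varepsilon\le(\log n)^{-(C-1)}$, $C\ge 57$ and $c=\frac{C(C-1)}{C-29}$ should then match verbatim. The same should hold for the output parameters $s_H=\frac{d}{4(\log|V(H)|)^c}$ and $\varepsilon_H=(\log|V(H)|)^{-(C-29)}$, and for the bound $\bigl(1-\frac{(\log\log\log n)^2}{\log\log n}\bigr)n$ on the covered vertices. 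Nothing needs to be re-proved if these all match. I expect this to be the case, since the displayed statement is transcribed from that source.

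For completeness I would also sketch the underlying argument. It is an iterative sparse-cut decomposition.

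Start with $\mathcal{H}=\{G\}$. While some $H\in\mathcal{H}$ is not a $(\frac18,c,s_H)$-expander, take a witness: a set $U$ with $|U|\le\frac23|V(H)|$ and an edge set $F$ with $|F|\le s_H|U|$ such that $|N_{H-F}(U)|<\frac{|U|}{8(\log|V(H)|)^c}$. Delete $F$ and the edges into the small boundary $N_{H-F}(U)$, and split $H$ into $H[U]$ and $H-U-N_{H-F}(U)$. Before and after each split, repeatedly delete vertices of degree below $d(H)/2$. Because $\Delta\le d$ and the average degree is close to $d$, this deletion removes only few edges.

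The bookkeeping uses the potential $\sum_H e(H)$. This is at most $\frac d2\sum_H|V(H)|$ because $\Delta\le d$. Each split at a piece of size $m$ costs about $d\,|U|/(\log m)^c$ edges. Each vertex lies on the small side of a split only $O(\log m)$ times per scale, since pieces shrink by a factor $2/3$. So the total loss charged to a vertex is $\sum_{\text{scales}}(\log m)^{1-c}\cdot d$. This converges, and the choice of $c$ relative to $C$ makes the loss compatible with the density requirement $d(H)\ge d(1-\varepsilon_H)$.

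A piece whose density drops below $d(1-\varepsilon_H)$ has lost many edges relative to its size. By the potential count, such pieces cover few vertices. Pieces that become too small, of size below roughly $\exp(\log\log n)$, are discarded. Together, these two kinds of discarded pieces account for the lost fraction $(\log\log\log n)^2/\log\log n$.

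The main obstacle in this from-scratch route is the accounting just described. The loss must be bounded uniformly across all scales, and the density guarantee $d(H)\ge d(1-\varepsilon_H)$ is relative to the piece's own size, so one needs a density-increment argument at every scale. This is exactly where the constants $C\ge 57$ and $c=\frac{C(C-1)}{C-29}$ come from. In the paper I would not redo this, and would simply invoke~\cite{expanders}.
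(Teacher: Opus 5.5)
Your proposal is correct and matches the paper: the paper gives no proof of this statement and simply imports it as Lemma~4.1 of~\cite{expanders} specialised to $\alpha=1$, so checking that the hypotheses, the parameters $c$, $s_H$, $\varepsilon_H$ and the covering bound transcribe correctly is the whole argument. Your accompanying sparse-cut sketch is only heuristic (it does not actually derive the per-piece density guarantee or the discarding threshold) and may not reflect how~\cite{expanders} proves the lemma, but you explicitly defer to the citation, so this does not affect correctness.
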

We also require the following lemma from~\cite{expanders} in order to link up a given set of vertices in a sublinear expander with disjoint paths through a random set. 
\begin{lemma}[\cite{expanders}, Lemma 5.1, applied with $\varepsilon=1/16$]\label{lem: expander paths}
Let $c > 0$, $0 < q < 1$, $s \ge \frac{2 (\log n)^{9c+21}}{q^{10}}$, and let $n$ be sufficiently large as a function of\footnote{Importantly, in the statement of this lemma we are allowing $q$ to depend on $n$. While this is not explicitly stated in~\cite{expanders}, we carefully checked their proofs and the assumption that $q$ is constant is not made in their statements or proofs.} $c$. Suppose that $G$ is an $n$-vertex $(1/16, c, s)$-expander, and let $V$ be a $q$-random subset of $V(G)$. Then, with probability at least $1 - n^{-1}$, the following holds:

For every choice of distinct vertices $x_1, \ldots, x_r, y_1, \ldots, y_r \in V(G)\setminus V$ satisfying $|N_G(X)| \ge \frac{100 (\log n)^{7c+19}}{q^6}|X|$ for every $X \subseteq \{x_1, \ldots, x_r, y_1, \ldots, y_r\}$, there exists a sequence of pairwise vertex-disjoint paths $P_1, \ldots, P_r$ in $G$ such that, for each $i \in [r]$, $P_i$ is a path from $x_i$ to $y_i$ with internal vertices in $V$.
\end{lemma}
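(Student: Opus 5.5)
The plan is to follow the standard ``expansion into a random set, then greedy linking with short paths'' strategy. There are three parts: a probabilistic statement about $V$, a deterministic expansion statement inside $V$, and a linking procedure that uses the Hall-type hypothesis on $\{x_1,\ldots,x_r,y_1,\ldots,y_r\}$.

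\textbf{Step 1 (the random set inherits expansion).} Show that with probability at least $1-n^{-1}$ the following holds for every $U\subseteq V(G)$ and every $F$ with $|F|\le s|U|$. The neighbourhood $N_{G-F}(U)$ meets $V$ in at least roughly $\frac{q}{32(\log n)^c}|U|$ vertices, and moreover every vertex has at least $q\delta/2$ neighbours in $V$ when degrees are large. Sets $U$ cannot be handled by a naive union bound, so I would first reduce to connected sets, or sets spanned by trees of bounded size in an auxiliary graph. Then I would apply Chernoff to $|N(U)\cap V|$ and union bound over the $n\cdot \Delta^{O(|U|)}$ such sets. The hypothesis $s\ge 2(\log n)^{9c+21}/q^{10}$ provides the slack that absorbs $F$ and the $q$-losses.

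\textbf{Step 2 (short robust paths inside $V$).} From Step 1, deduce a ``ball-growing'' lemma. Let $W$ be a forbidden set and let $A,B$ be sets with $|N_G(A)\cap V|,|N_G(B)\cap V|\ge K|W|$, where $K=(\log n)^{O(c)}/q^{O(1)}$. Then there is an $A$--$B$ path of length $m=O((\log n)^{c+1}/q)$ whose interior lies in $V\setminus W$. To prove this, grow the balls $B^i_V(A)$ in $G[V]-W$, treating the edges incident to $W$ as part of the deleted set $F$. Since $|W|$ is small relative to the current ball size, the robust expansion applies, and each round grows the ball by a factor $1+\Omega(q/(\log n)^c)$. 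After $m$ rounds both balls exceed half of $V$, so they meet.

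\textbf{Step 3 (linking all pairs).} Connect the pairs $(x_i,y_i)$ one at a time. I expect this step to be the main obstacle, because the total number of used vertices, $r\cdot m$, can be much larger than the neighbourhood of a single $x_i$. The fix I would try first is to group the endpoints and exploit the Hall condition $|N_G(X)|\ge \frac{100(\log n)^{7c+19}}{q^6}|X|$. Suppose some pair $x_i$ cannot be linked after deleting the current path set $W$. Let $X$ be the set of all endpoints whose neighbourhoods are largely swallowed by $W$. Then the hypothesis, combined with Step 1, shows that $N_G(X)\cap V$ is large compared to $|X|\cdot m$. This lets us either start from the whole set $X$, finding one path between two members of $X$ and reassigning, or rebuild the paths through a rerouting argument. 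Concretely, I would first greedily match the endpoints to distinct private neighbours in $V$; Hall's theorem applies by the hypothesis together with Step 1. I would then run the greedy linking in the order that keeps the invariant $|W|\le m\cdot(\#\text{linked pairs})$. Each new path is found by Step 2 applied to the private neighbourhoods of $x_i$ and $y_i$, which the hypothesis guarantees to be at least $K m$ times larger than what previous paths could have destroyed, with the polylog factors matching the exponents $7c+19$ and $9c+21$.
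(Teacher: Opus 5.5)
\textbf{Verdict: genuine gap.} The paper gives no proof of this statement. It imports Lemma~5.1 of Letzter, Methuku and Sudakov~\cite{expanders} with $\varepsilon=1/16$, adding only the check that their argument never uses that $q$ is constant. Your sketch therefore has to stand on its own, and it does not.

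\textbf{Step~1.} As written it is false, and its proposed proof does not work. Since $U$ and $F$ are quantified after $V$ is sampled, $U=V$ already gives $N_{G-F}(U)\cap V=\emptyset$. For $U=\{u\}$ you may also take $F$ to be all edges from $u$ into $V$; this is admissible once $|N_G(u)\cap V|\le s$, and the expander definition only forces $\deg(u)>s$. In this paper's application every degree is at most $n^{\varepsilon}$ while $qn^{\varepsilon}$ is much smaller than $s$, so this really happens. Even a corrected version ($U\subseteq V$, no adversarial $F$) is not reached by your union bound. Once $|U|$ exceeds the minimum degree, the only general lower bound is $|N_G(U)|\ge |U|/(16(\log n)^c)$. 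So the lower-tail probability for a fixed $U$ is only $\exp(-\Theta(q|U|/(\log n)^c))$. Because $q/(\log n)^c<1$, this is far too weak to beat the factor $n\Delta^{O(|U|)}$ in your union bound. Reducing arbitrary $U$ to connected pieces is also unjustified, since neighbourhoods of nearby pieces overlap. What is missing is a way to use the large minimum degree and edge-robustness to avoid union-bounding over all sets of size $|U|$. One example is certifying most of the robust neighbourhood of $U$ by a much smaller subset of $U$. This is where a hypothesis like $s\ge 2(\log n)^{9c+21}/q^{10}$ is actually consumed; saying ``the slack absorbs $F$'' does not replace that argument.

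\textbf{Step~3 (the more fundamental gap).} With Hall, the hypothesis gives a single endpoint only about $qK$ neighbours in $V$, where $K=100(\log n)^{7c+19}q^{-6}$. The earlier paths, however, may occupy $(i-1)m$ vertices of $V$. Since $r$ can be of order $n/K$, this is in general far larger than $qK$. Nothing therefore implies your key sentence that the private neighbourhoods of $x_i,y_i$ are ``at least $Km$ times larger than what previous paths could have destroyed''. Moreover, Step~2 needs the starting sets to be a large factor bigger than the \emph{whole} forbidden set $W$, not just $W\cap N_G(A)$. So it becomes inapplicable after a few pairs whatever order you choose; the invariant $|W|\le m\cdot\#\{\text{linked pairs}\}$ holds automatically and gives nothing. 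Reserving the other endpoints' private neighbourhoods does not help either, since they then join $W$ and have about $r$ times the size of a single one. ``Finding a path between two members of $X$ and reassigning'' is unavailable because the pairing $x_i\leftrightarrow y_i$ is prescribed.

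The real content of the lemma is growing a small set (an endpoint's neighbourhood in $V$) past a forbidden set much larger than it. That needs a genuinely new mechanism. One option is to use the edge-robust expansion to delete the edges from the growing set into $W$, together with an argument that there are few of them. Another is to maintain an extendability-type invariant over all sets of unlinked endpoints. Your proposal supplies neither, and the claimed match with the exponents $7c+19$ and $9c+21$ is not derived.
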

We record the following immediate consequence of the previous lemma, which allows us to amplify the success probability of the desired outcome at the cost of slightly increasing $q$.
\begin{corollary}\label{cor: expander paths}
    Let $c > 0$, let $n$ be sufficiently large as a function of $c$, let $0 < q \leq 1/\lceil\log n\rceil$ and $s\ge \frac{2 (\log n)^{9c+21}}{q^{10}}$. Suppose that $G$ is an $n$-vertex $(1/16, c, s)$-expander, and let $V$ be a $(\lceil\log n\rceil\cdot q)$-random subset of $V(G)$. Then, with probability at least $1 - n^{-\log n}$, the following holds.

    For every choice of distinct vertices $x_1, \ldots, x_r, y_1, \ldots, y_r\in V(G)\setminus V$ satisfying $|N_G(X)| \ge \frac{100 (\log n)^{7c+19}}{q^6}|X|$ for every $X \subseteq \{x_1, \ldots, x_r, y_1, \ldots, y_r\}$, there is a sequence of vertex-disjoint paths $P_1, \ldots, P_r$ in $G$ such that, for each $i \in [r]$, $P_i$ is a path from $x_i$ to $y_i$ with internal vertices in $V$.
\end{corollary}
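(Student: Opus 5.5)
The plan is to obtain the amplified random set as a union of $k:=\lceil\log n\rceil$ independent $q$-random subsets, and to apply Lemma~\ref{lem: expander paths} to each of them. Let $V_1,\ldots,V_k$ be independent $q$-random subsets of $V(G)$ and put $V':=V_1\cup\cdots\cup V_k$. Each vertex lies in $V'$ independently with probability $1-(1-q)^k\le kq$, and $kq\le 1$ by the assumption $q\le 1/\lceil\log n\rceil$, so the statement makes sense.

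First I couple $V'$ with $V$. Set $p:=\bigl(1-(1-q)^k\bigr)/(kq)\in[0,1]$. Let $V''$ be an independent $p'$-random subset with $p'$ chosen so that $1-(1-(1-(1-q)^k))(1-p')=kq$. Then $V'\cup V''$ is distributed exactly as a $(kq)$-random subset, so we may take $V=V'\cup V''\supseteq V'$. The property in the conclusion is monotone in $V$, but only partly: enlarging $V$ makes it easier to route internal vertices, while it shrinks the allowed endpoint set $V(G)\setminus V$. This is no problem. If $x_i,y_i\notin V$ then $x_i,y_i\notin V_j$ for every $j$, and any paths with internal vertices in $V_j\subseteq V$ are valid paths with internal vertices in $V$.

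Next I apply Lemma~\ref{lem: expander paths} to each $V_j$ with the same $c,q,s$; its hypotheses are exactly the ones assumed here. Call $V_j$ \emph{good} if the conclusion of Lemma~\ref{lem: expander paths} holds for $V_j$. Each $V_j$ fails independently with probability at most $n^{-1}$, so all $k$ fail with probability at most $n^{-k}\le n^{-\log n}$. Hence with probability at least $1-n^{-\log n}$ some $V_j$ is good.

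Finally, fix such a $j$ and take any distinct $x_1,\ldots,x_r,y_1,\ldots,y_r\in V(G)\setminus V$ satisfying the neighbourhood condition. These vertices lie in $V(G)\setminus V_j$, and the neighbourhood condition is identical, since it involves neither $V$ nor $V_j$. Goodness of $V_j$ then gives vertex-disjoint $x_i$--$y_i$ paths with internal vertices in $V_j\subseteq V$, as required.

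The only delicate step is the coupling, namely checking that $V$ can be realised as a superset of a union of independent $q$-random sets. Everything else is immediate from Lemma~\ref{lem: expander paths} and independence.
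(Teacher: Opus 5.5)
Your proof is correct and follows the paper's argument: take $\lceil\log n\rceil$ independent $q$-random sets, couple their union into $V$ using $1-(1-q)^{\lceil\log n\rceil}\le \lceil\log n\rceil q$, and note that if any one of them satisfies the conclusion of Lemma~\ref{lem: expander paths} then so does $V$. Your padding set $V''$, with $p'=1-(1-kq)/(1-q)^k\in[0,1]$ by Bernoulli's inequality, just makes explicit the coupling the paper asserts, and the auxiliary quantity $p$ you define is never used and can be dropped.
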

\begin{proof}
    Let $V_1,\ldots,V_{\lceil\log n\rceil}$ be independent $q$-random subsets of $V(G)$. Since $\bigcup V_i$ is a $(1-(1-q)^{\lceil\log n\rceil})$-random subset and $(1-(1-q)^{\lceil\log n\rceil})\le q \lceil\log n\rceil$, we can couple these subsets to $V$ in such a way that $\bigcup V_i\subseteq V$ holds in every outcome. The probability that at least one of $V_1,\ldots,V_{\lceil \log n\rceil}$ satisfies the conclusion of Lemma~\ref{lem: expander paths} is at least $1-n^{-\lceil\log n\rceil}\ge 1-n^{-\log n}$. Note that if any of the $V_i$ satisfies this conclusion then so does $V$. This establishes the claim of the corollary.
\end{proof}
The following technical lemma combines the previous results into a ready-to-use statement which we will apply directly in our proof of Theorem~\ref{thm: main}. We need some additional terminology: Given two vertices $u,v$ in a graph $G$, we define a \emph{$u$-$v$-arc} in $G$ as simply a $u$-$v$-path if $u\neq v$, and as a cycle of positive length through $u=v$ if $u$ and $v$ are identical. In both cases, we refer to the \emph{internal vertices} of an arc as all its vertices distinct from $u$ and $v$, and refer to $u,v$ as its \emph{endpoints}.
\begin{lemma}\label{lem: expander connections}
    Let $0<\alpha,\varepsilon,\gamma<1$ be constants such that $\alpha +\gamma\leq\varepsilon/2$. Let $c>0$, and let $m\le n$ be positive integers such that $n$ is sufficiently large. Let $s\ge 2 (\log n)^{9c+31}n^{10\gamma/7}$. Suppose that $G$ is an $m$-vertex $(1/8, c, s)$-expander with $\Delta(G)\leq n^{\varepsilon}$ and $\delta(G)\geq n^{\varepsilon}/3$. Let $V_*\subseteq V(G)$ be a fixed set of at most $n^{\alpha}$ vertices and $V_0$ an $n^{-\gamma}$-random subset of $V(G)$. Then, with probability at least $1-n^{-\varepsilon^2\log n/4}$ the following holds: 

    \noindent For every $\emptyset \neq W\subseteq V_*\cup V_0$, and every multigraph $R$ with vertex-set $W$, maximum degree at most $2$ and at least one edge, there exists a collection of internally vertex-disjoint arcs $\mathcal P$ in $G$ with the following properties.
    \begin{itemize}
        \item Every arc $P\in \mathcal P$ has all internal vertices in $V(G)\setminus W$ and its endpoints in $W$.
        \item Every cycle $C$ in $R$ has a vertex $v_C$ such that no vertex in $V(C)\setminus \{v_C\}$ is an endpoint of any arc in $\mathcal P$, and the graph $R'$ obtained by removing from every cycle $C$ in $R$ all edges and all vertices besides $v_C$ and adding an edge $uv$ if $\mathcal P$ contains an arc from $u$ to $v$ is a cycle (possibly a loop or two parallel edges). 
        \item The arcs in $\mathcal P$ contain a total of at least $m\cdot n^{-\gamma}/2$ edges.
    \end{itemize}
\end{lemma}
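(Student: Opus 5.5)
We work with $G':=G-V_*$ and feed it to Corollary~\ref{cor: expander paths}. Since $|V_*|\le n^{\alpha}$, Lemma~\ref{lem: can remove} (with $\delta(G)\ge n^{\varepsilon}/3$ and, if needed, $s$ truncated to $\min(s,n^{\varepsilon}/100)$, which is still at least $2(\log n)^{9c+31}n^{10\gamma/7}$ because $\gamma\le\varepsilon/2$ makes the exponents compatible) shows that $G'$ is a $(1/16,c,s)$-expander. Removing $V_0$ as well is impossible this way, since $|V_0|\approx mn^{-\gamma}$ is far too large. So instead we split the randomness: write $V_0=A\cup B$, where $A$ and $B$ are independent random subsets. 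We choose $q$ so that $\lceil\log n\rceil q$ is, say, $n^{-\gamma}/2$; then $q\approx n^{-\gamma}/\log n$ and $q^{-10}\le n^{10\gamma}(\log n)^{10}$.

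This choice shows the exponent bookkeeping is delicate. The hypothesis $s\ge 2(\log n)^{9c+31}n^{10\gamma/7}$ suggests the authors take $q$ with $q^{10}\approx n^{-10\gamma/7}$, i.e.\ $q\approx n^{-\gamma/7}$. I would therefore pick the path-vertex set to be a $q$-random set with $q=n^{-\gamma/7}$ inside $V(G)\setminus(V_*\cup V_0)$, independent of $V_0$ and \emph{not} needing to be $V_0$ itself. The internal vertices of the arcs only need to avoid $W\subseteq V_*\cup V_0$. Concretely, sample the reservoir $U$ as a $\lceil\log n\rceil q$-random subset of $V(G)$ and use $U\setminus(V_*\cup V_0)$, coupling as in the proof of Corollary~\ref{cor: expander paths}. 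The corollary then gives a failure probability of at most $m^{-\log m}$; and if $m<n^{\varepsilon}$ the statement is trivial or handled via minimum degree. A union bound then handles the intersection with $V_0$, together with the Chernoff estimate $|V_0|\ge mn^{-\gamma}/1.5$. Since the corollary's conclusion is quantified over all choices of terminals, every $W$ and $R$ is covered simultaneously.

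Given $W$ and $R$, we pick one representative $v_C$ from each cycle of $R$ and arrange the terminals into a single long cyclic order. We first list the paths of $R$ and the representatives $v_C$ in some order $w_1,\dots,w_k$. We want arcs joining consecutive items, closing up into a cycle $R'$; each path component of $R$ is traversed internally by its own edges, and arcs connect an end of one component to the start of the next. The key point is that the third bullet demands at least $mn^{-\gamma}/2$ edges, which is comparable to $|V_0|$. To meet it, we do not connect the terminals directly. Instead we route the arcs through many extra vertices of $V_0\setminus W$, which are still available as waypoints since only $W$ is forbidden internally: partition $V_0\setminus W$ into consecutive waypoints $z_1,z_2,\dots$ inserted between $w_k$ and $w_1$, and ask the corollary for disjoint paths between consecutive waypoints. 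Each path has at least one edge, so we get about $|V_0|-|W|$ edges. If $|W|$ is large, the arcs between terminals themselves already number about $|W|$. In either case the count is at least $\max(|W|,|V_0\setminus W|)\ge mn^{-\gamma}/2$ edges.

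The neighbourhood condition for the terminals $X$ follows as in Lemma~\ref{lem: can remove}. Each terminal is used at most twice, so we split each terminal into two copies, and we need $|N(X)|\ge 100(\log n)^{7c+19}q^{-6}|X|$. For $|X|\le n^{\varepsilon}/6$ the minimum degree gives $n^{\varepsilon}/8$, which exceeds $n^{6\gamma/7}\operatorname{polylog}\cdot|X|$ only when $|X|$ is small. For larger $X$ we need genuine expansion, and $\beta/(\log)^c$ expansion is far below $q^{-6}$. \textbf{This is the main obstacle}: terminal sets of size about $mn^{-\gamma}$ cannot satisfy the neighbourhood condition. I would fix this by using only a sparse set of waypoints, and by treating the terminals in $V_0$ via short stubs: each terminal first takes an edge to a private neighbour outside the reservoir, using that the degree is at least $n^{\varepsilon}/3\gg|V_*\cup V_0|$ locally. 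This reduces matters to terminals spread with density $n^{-\gamma}$. Since $n^{-\gamma}\cdot\Delta\le n^{\varepsilon-\gamma}$, each vertex has few terminal neighbours, so $|N(X)|\gtrsim n^{\gamma}|X|$ by double counting, and this exceeds $q^{-6}\operatorname{polylog}=n^{6\gamma/7}\operatorname{polylog}$. The statement would then be proved by verifying that these stubs exist for all $W$ simultaneously, with high probability, via Chernoff bounds on the number of $V_0$-neighbours of each vertex.
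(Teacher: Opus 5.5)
Your construction does not secure the third bullet. You take the waypoints from $V_0\setminus W$ and claim the arcs carry at least $\max(|W|,|V_0\setminus W|)$ edges. But the number of arcs equals the number of components of $R$ after each cycle is collapsed to $v_C$; it is not $|W|$. Take $W=V_0$ and $R$ a Hamilton path (or Hamilton cycle) on $W$. Then $V_0\setminus W=\emptyset$ and a single arc closes up $R'$. Corollary~\ref{cor: expander paths} gives no control on that arc's length, so you may get a single edge instead of $mn^{-\gamma}/2$. Since $W$ can be all of $V_*\cup V_0$, a $W$-independent supply of waypoints must lie outside $V_*\cup V_0$. The paper samples a further independent $n^{-\gamma}$-random set $V_5'\subseteq V(G')$, of size at least $mn^{-\gamma}/2$ by Chernoff. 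It then replaces one edge of the demand multigraph by a path whose internal vertices are exactly $V_5'$, which forces every vertex of $V_5'$ onto an arc. Because $V_5'$ is sparse, it does not spoil the neighbourhood condition.

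Two further steps do not work as written.

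First, Corollary~\ref{cor: expander paths} requires the $2r$ terminals to be distinct, while each $v_C$, each isolated vertex of $R$ and each waypoint is an endpoint of two arcs. ``Splitting each terminal into two copies'' is not an operation the corollary supports. If your stubs are meant to create the second copy, you would need pairwise distinct, sparse stub endpoints outside all reservoirs, chosen for all $W$ at once, and you do not carry this out. The paper instead splits the demand multigraph, which has maximum degree $2$, into three matchings. It uses three pairwise disjoint reservoirs $V_1',V_2',V_3'$ of density $n^{-\gamma/7}$, one per matching. All random sets are colour classes of a single random colouring of $V(G')$, so the reservoirs avoid $V_0$ from the outset. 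Applying the corollary to $U$ and then discarding $U\cap V_0$ does not work, since terminals lying in $U$ are not covered by its conclusion.

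Second, the terminals in $V_*$ are not vertices of $G'=G-V_*$, and it is these that need stubs. The paper joins each $v\in V_*$ to two private neighbours in another $n^{-\gamma}$-random set $V_4'$.

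Finally, your ``main obstacle'' is not one. By Chernoff, every vertex has at most $6n^{\varepsilon-\gamma}$ neighbours in $(V_0\setminus V_*)\cup V_4'\cup V_5'$. Combined with $\delta(G')\ge n^{\varepsilon}/4$, the double counting you sketch at the end gives $|N_{G'}(X)|\ge n^{\gamma}|X|/24$ for every $X\subseteq (V_0\setminus V_*)\cup V_4'\cup V_5'$, whatever its size. This is exactly how the paper verifies the hypothesis of the corollary.
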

\begin{proof}
     First observe that $m=|V(G)|>\delta(G)\ge n^\varepsilon/3$ and hence $m$ is sufficiently large whenever $n$ is. Also, note that we have $2(\log n)^{9c+31}n^{10\gamma/7}\le n^{\varepsilon}/100$ since $\gamma\leq \varepsilon/2$ and $n$ is sufficiently large. This in particular implies that we may without loss of generality assume $s\le n^\varepsilon/100$ (since every $(1/8,c,s)$-expander is also a $(1/8,c,s')$-expander for every $s'\le s$).

     We may now apply Lemma~\ref{lem: can remove} to the vertex-set $V_*$ in $G$, and find that $G' = G-V_*$ is a $(1/16,c,s)$-expander with $\delta(G')\geq n^{\varepsilon}/3-n^{\alpha}\geq n^{\varepsilon}/4$. Also, $G'$ has $m'\ge m-n^\alpha\ge \frac{3}{4}m$ vertices.

     Let us randomly and independently color the vertices of $G'$ with colors from $\{0,1,2,3,4,5\}$, where we assign colors $0,4,5$ with probability $n^{-\gamma}$ each and colors $1,2,3$ with probability $n^{-\gamma/7}$ each, while some vertices remain uncolored. Denote by $V_i'$ the set of vertices with color $i$. Importantly, we ensure to couple this random coloring to $V_0$ in such a way that $V_{0}'=V_0\setminus V_*$.
     Since each $V_i'$ is distributed as a binomial random subset of $V(G')$, the probability that each of $V_1',V_2',V_3'$ satisfies the conclusion of Corollary~\ref{cor: expander paths} with $q=\frac{1}{n^{\gamma/7}\lceil \log m'\rceil}$ is at least $1-3m'^{-\log m'}\geq 1-n^{-\varepsilon^2\log n/2}$. Furthermore, a standard application of the Chernoff bound together with a union bound over all vertices shows that with probability at least $1-e^{-n^{\varepsilon-\gamma}/2}$ every vertex in $G$ has at most $6n^{\varepsilon-\gamma}$ neighbors in $V_0'\cup V_4'\cup V_5'$ (here we use the assumption $\Delta(G)\le n^\varepsilon$). Similarly, with probability at least $1-e^{-n^{\varepsilon-\gamma}/25}$, every vertex in $V_*$ has at least $n^{\varepsilon-\gamma}/6$ neighbors in $V_4'$ (using the assumption $\delta(G)\ge n^\varepsilon/3$). Finally, we also obtain that $|V_5'|\geq n^{-\gamma}m/2$ with probability at least $1-e^{-n^{-\gamma}m/24}\ge 1-e^{-n^{\varepsilon-\gamma}/72}$. All of these events happen simultaneously with probability at least $1-n^{-\varepsilon^2\log n/4}$. We will now show that deterministically, conditioned on these events, the event in the lemma statement holds. Hence, the latter also will hold with probability at least $1-n^{-\varepsilon^2\log n/4}$, which will conclude the proof.

     So let $W\subseteq V_*\cup V_0$ be an arbitrary subset and $R$ an arbitrary non-empty multigraph with vertex-set $W$ and maximum degree $2$. In particular, $R$ is a disjoint union of paths and cycles (where the latter may have length $1$ or $2$). For every cycle $C$ in $R$ let us pick an arbitrary vertex $v_C$ on that cycle and let $W'\subseteq W$ be the set obtained by keeping, from each cycle $C$, only the representative $v_C$ and by keeping all vertices not lying on cycles of $R$. Let $R_{W'}\subseteq R[W']$ be obtained by removing all loops. Then, $R_{W'}$ is a union of paths and isolated vertices. Let $H$ be an arbitrary multigraph on $W'$ such that $R_{W'}\cup H$ is a cycle spanning all vertices of $W'$. Note that $\Delta(H)\leq 2$ and $H$ contains at least one edge since $R_{W'}$ is not a cycle. 
     Additionally, $H$ contains at most $1$ loop and if it does then $|W'|=1$.

     Now, we show there exists a collection of internally vertex-disjoint arcs $\mathcal P$ such that there is exactly one arc with endpoints $u$ and $v$ in $\mathcal P$ for every edge $uv$ in $H$, while all the internal vertices of these arcs are part of $V_1'\cup V_2'\cup V_3'\cup V_4'\cup V_5'$ and hence disjoint from $W$. We further ensure that $\mathcal P$ contains at least $m\cdot n^{-\gamma}/2$ edges in total by choosing it such that every vertex in $V_5'$ appears as an internal vertex of some arc in $\mathcal{P}$. Note that such a collection of arcs $\mathcal P$ will certify that the event in the lemma statement holds, as desired.

     We cannot directly apply Corollary~\ref{cor: expander paths}, as some vertices of $H$, namely $V(H)\cap V_*$, are not in $G'$. To circumvent this issue, we make the following adjustment to $H$. For every $v\in V_*$, let us greedily pick two neighbors $v_1,v_2$ of $v$ in $V_4'$ such that all the chosen neighbors are distinct. Note that this is possible since $|V_*|\leq n^\alpha$ and each $v\in V_*$ has at least $n^{\varepsilon-\gamma}/6\geq 2n^\alpha$ neighbors in $V_4'\setminus V_*$. Also note that, since $V(H)\subseteq W'\subseteq V_\ast\cup V_0$, which is disjoint from $V_4'$, all these chosen neighbors are not contained in $H$. Let us now adjust $H$ to obtain a multigraph $H'$, by, in some order, for every $v\in V_*$ with incident edges $vu,vw$, removing $v$ from $H$, adding $v_1$ and $v_2$ and replacing the edges $vu,vw$ with $v_1u,v_2w$, where we add only the edge $v_1u$ if $v$ is incident to only one edge $vu$, we add no edges at all if $v$ is isolated and, in the special case that $v$ is incident to a loop in $H$, we just add the edge $v_1v_2$. Observe that $\Delta(H')\le 2$ still holds after this modification, since $V(H)\subseteq V_*\cup V_0$ by definition and hence all the new vertices we added do not coincide with old vertices of $H$. 
     Further, if $H'$ contains a loop then so does $H$. As we observed earlier, this implies that $|W'|=1$ and, by the definition of $H'$, $H=H'$. We get that if $H'$ contains a loop then this loop is the only edge in $H'$.

     We now aim to find a collection of internally vertex-disjoint arcs $\mathcal P'$ in $G$ with internal vertices in $V_1'\cup V_2' \cup V_3' \cup V_5'$ such that for every edge $uv$ in $H'$ there is an arc in $\mathcal P'$ from $u$ to $v$, while ensuring that every vertex in $V_5'$ occurs as the internal vertex of some arc in $\mathcal P'$. Note that such $\mathcal P'$ can be extended to the desired collection of arcs $\mathcal P$ in $G$ using the edges $vv_1$ and $vv_2$ where necessary.

     Before finding this collection $\mathcal{P}'$, let us make a last adjustment to $H'$ to encode the requirement that the collection of arcs contains all of $V_5'$. Let $e$ be an arbitrary edge of $H'$ and create a new graph $H''$, obtained from $H'$ by replacing $e$ with an arc whose internal vertices span $V_5'$. Note that since $V(H')\subseteq V_0'\cup V_4'$, after this modification we still have $\Delta(H'')\le 2$. 
      Further, $H''$ does not contain a loop, since if $H'$ has a loop then $e$ is this loop and $H''$ is then a cycle of length at least $|V_5'|\geq 2$ as $H'$ has no other edges. 
     If we can now find a collection $\mathcal P''$ of internally vertex-disjoint arcs in $G$ with internal vertices from $V_1'\cup V_2'\cup V_3'$ such that for every edge $uv$ in $H''$ there is an arc from $u$ to $v$ in $\mathcal P''$, then by concatenating the arcs in $\mathcal P''$ with matching endpoints in $V_5'$ we obtain a collection of arcs $\mathcal P'$ as described above and, hence, the desired collection of arcs $\mathcal P$.

     It remains to find $\mathcal{P}''$ with the desired properties. We aim to use the conclusion of Corollary~\ref{cor: expander paths} for $V_1',V_2',V_3'$, where our pairs $x_1,\ldots,x_r,y_1,\ldots,y_r$ are vertices from $V(H'')$. Let $X\subseteq V(H'')$ be arbitrary. Then, since $X\subseteq V_0'\cup V_4'\cup V_5'$, every vertex has at most $6n^{\varepsilon-\gamma}$ neighbors in $X$. Together with $\delta(G')\geq n^{\varepsilon}/4$, we conclude $|N_{G'}(X)|\geq n^{\gamma}|X|/24$, which satisfies the requirement of Corollary~\ref{cor: expander paths} with $q=\frac{1}{n^{\gamma/7}\log m'}$.

     Since $\Delta(H'')\leq 2$, there exists a partition of $H''$ into three matchings $M_1,M_2,M_3$. Now, for each matching $M_i$, by the properties guaranteed by Corollary~\ref{cor: expander paths}, there exists a collection of vertex-disjoint paths $\mathcal P_i$ such that the internal vertices are from $V_{i}'$ and for every edge $uv\in M_i$, there is a path in $\mathcal P_i$ from $u$ to $v$. Finally, let $\mathcal P''=\mathcal P_1\cup\mathcal P_2\cup\mathcal P_3$ and note that the paths remain internally vertex-disjoint, as the internal vertices of $\mathcal P_i$ are from $V_{i}'$. As explained before, from $\mathcal{P}''$ we can obtain $\mathcal{P}'$ which in turn yields a collection of arcs $\mathcal{P}$ in $G$ with the desired properties. This concludes the proof of the lemma.
\end{proof}
%%%%%%%%%%%%%%%%%%%%%%%%%%%%%%%%%%%%%%%%%%%%%%%%%%%%%%%%%%%%%

Lemma~\ref{lem: expander connections} provides us with a tool for finding paths in expanders given that most of the endpoints lie in an $n^{-\gamma}$-random subset of $V(G)$. However, in the proof of Theorem~\ref{thm: main}, the endpoints we want to connect will follow a different random distribution. The following lemma couples these two distributions together, so that we will be able to apply Lemma~\ref{lem: expander connections} nonetheless.
\begin{lemma}\label{lem: coupling}
    Let $R$ be a multigraph with maximum degree at most $2$. Consider selecting each edge of $R$ independently with probability $p\in \left[0,\frac{1}{4}\right]$, and define $V'\subseteq V(R)$ as the set of all vertices for which at least one incident edge was selected. 

    Then, there exists a $2\sqrt{p}$-random subset $V$ of $V(R)$ coupled to the random process described above in such a way that $V'\subseteq V$ holds in every outcome.
\end{lemma}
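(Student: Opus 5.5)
The plan is to realise the edge selection as a product of independent ``half-edge'' coins, so that membership of a vertex in $V'$ is controlled only by coins attached to that vertex. Concretely, split every edge $e=uv$ of $R$ into two half-edges, one at $u$ and one at $v$; a loop at $v$ contributes two half-edges, both at $v$. Since $\Delta(R)\le 2$ and a loop counts twice towards the degree, every vertex carries at most two half-edges. To each half-edge $h$ we attach an independent Bernoulli random variable $\xi_h$ with parameter $\sqrt{p}$, and we declare the edge $e$ \emph{selected} if and only if both of its half-edge coins equal $1$. Distinct edges use disjoint pairs of half-edges, so the selection events of different edges are independent, each with probability $(\sqrt p)^2=p$. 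Hence this is exactly the random process in the statement, and $V'$ can be defined from it.

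Next, define $V_0$ as the set of vertices $v$ such that at least one half-edge at $v$ has $\xi_h=1$. If $v\in V'$, some edge at $v$ is selected, so both of its coins are $1$, in particular the one at $v$; hence $V'\subseteq V_0$ in every outcome. The events $\{v\in V_0\}$ depend on pairwise disjoint sets of coins, namely the half-edges at $v$, so they are mutually independent across vertices. Moreover $\Pr[v\in V_0]=1-(1-\sqrt p)^{d(v)}=:q_v$, and since $d(v)\le 2$ we get $q_v\le 1-(1-\sqrt p)^2\le 2\sqrt p$.

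Finally, we pad to reach probability exactly $2\sqrt p$, which is at most $1$ because $p\le 1/4$. For each vertex $v$, take an additional independent Bernoulli variable $\eta_v$ with parameter $r_v\in[0,1]$ chosen so that $q_v+(1-q_v)r_v=2\sqrt p$; this is possible since $q_v\le 2\sqrt p\le 1$, and if $q_v=1$ then $2\sqrt p=1$ and we may take $r_v=0$. Set $V:=V_0\cup\{v:\eta_v=1\}$. Then the vertices lie in $V$ independently, each with probability exactly $2\sqrt p$, so $V$ is a $2\sqrt p$-random subset of $V(R)$, and $V'\subseteq V_0\subseteq V$ holds in every outcome.

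There is no real obstacle. The only points needing care are the treatment of loops and of parallel edges in the multigraph, which is handled by counting half-edges with multiplicity, and checking that $2\sqrt p\le 1$ so that the padding step is well defined.
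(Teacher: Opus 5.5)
Your proof is correct and is essentially the paper's argument: the paper also realizes each edge's selection as the product of two independent Bernoulli$(\sqrt{p})$ coins, one per endpoint (two coins at the same vertex for a loop). It likewise takes $V$ to be the set of vertices having some incident coin equal to $1$. The only difference is that you explicitly pad each vertex's inclusion probability from $1-(1-\sqrt{p})^{d(v)}\le 2\sqrt{p}$ up to exactly $2\sqrt{p}$, using $p\le 1/4$. The paper leaves this step implicit, only noting that the probability is at most $2\sqrt{p}$.
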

\begin{proof}
    Instead of selecting each edge $e=uv$ with probability $p$, sample $X_u^e$ and $X_v^e$ independently, Bernoulli with probability $\sqrt p$, and set $X_e = X_u^e\cdot X_v^e$ (if $e$ is a loop, use two independent Bernoulli variables corresponding to its two incidences at $u$). Then, using $X_e$ as the indicator for the edge $e$, we obtain a subgraph of $R$ containing each edge independently with probability $p$ from which we can select $V'$ as the set of vertices incident to at least one edge. Now define $V\subseteq V(R)$ as the set of all vertices $u$ for which $X^e_u=1$ for some edge $e$ incident to $u$. Then, the probability that $u$ is included in $V$ is independent of all other vertices and is at most $2\sqrt{p}$ as every vertex is incident to at most $2$ edges in $R$. To conclude, note that $V'\subseteq V$.
\end{proof}
\section{Key technical lemma}\label{sec:keylemma}
In this section, we prove a key technical lemma about cycle decompositions of Eulerian graphs. Recall that a \emph{cycle decomposition} $\mathcal{C}$ of a (multi)graph $G$ is a collection of cycles in $G$ containing every edge of $G$ exactly once. As is well-known, a (multi)graph $G$ admits a cycle decomposition if and only if all its vertices have even degree. In the following, we will call \emph{connected} multigraphs with this property \emph{Eulerian}. 

In the remainder of this paper, given a collection $\mathcal C$ of cycles in a graph and some $p\in [0,1]$, we write $\mathcal C_p$ to denote a random collection of cycles obtained by selecting every cycle of $\mathcal C$ with probability $p$ independently of all other cycles. With a slight abuse of notation, for a collection of cycles $\mathcal{C}$ in a graph $G$ we will sometimes also use $\mathcal{C}$ to refer to the \emph{subgraph} of $G$ formed by the union of all the cycles in $\mathcal{C}$. In particular, $V(\mathcal{C})$ will refer to the vertex-set of said subgraph, and $\Delta(\mathcal{C})$ to its maximum degree.

In the following, given a weight function $w:V(G)\rightarrow \mathbb{Z}_{\ge 0}$ on the vertices of a graph $G$ and a subset of vertices $X$, we will use $w(X):=\sum_{v\in X}w(v)$ to denote the total weight of $X$. Similarly, we will use the notation $w(H):=w(V(H))$ for the total weight of vertices in a subgraph $H$ of $G$.

The following is the aforementioned key technical lemma that we aim to prove in this section.
\begin{lemma}\label{lem: cycles}
    Let $G$ be a non-empty Eulerian multigraph equipped with a cycle decomposition $\mathcal{C}$. Let $k\in \mathbb{N}$, $\delta \in (0,1)$, $p\in [k^{-\delta},1]$ and let $w:V(G)\rightarrow \mathbb{Z}_{\ge 0}$ be a weight function on $V(G)$ with positive total weight. Then, there exists a subcollection $\mathcal C^*\subseteq \mathcal C$ of cycles with $\Delta(\mathcal C^*)\leq 6k$ such that with probability at least $w(G)^{-\delta}/2$, the graph $\mathcal C^*\cup\mathcal C_p$ has a connected component of total weight at least $w(G)^{1-\delta}/2$.
\end{lemma}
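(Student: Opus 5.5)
The plan is to argue by induction on $|\mathcal C|$, or equivalently on $W:=w(G)$ with the number of cycles as a secondary parameter, tracking the product of the success probability and the component weight. For each vertex $v$ let $d_{\mathcal C}(v)$ denote the number of cycles of $\mathcal C$ through $v$, counted with multiplicity, so that $\deg_G(v)=2d_{\mathcal C}(v)$.

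\textbf{Easy case.} Suppose there is a subcollection $\mathcal C'\subseteq\mathcal C$ with $\Delta(\mathcal C')\le 6k$ whose union is connected and carries weight at least $W^{1-\delta}/2$. Then we set $\mathcal C^*=\mathcal C'$ and the lemma holds with probability $1$. This covers in particular the case where every vertex has $d_{\mathcal C}(v)\le 3k$: since $G$ is connected, all of $\mathcal C$ can be taken. So we may assume there are ``hubs'', that is, vertices $v$ with $d_{\mathcal C}(v)>3k$. In $\mathcal C^*$ we may keep at most $3k$ of the cycles through a hub; the remaining cycles through it must be supplied by the random selection $\mathcal C_p$.

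\textbf{Splitting at a hub.} Fix a hub $v$ and consider the ``branches'' at $v$. These are the connected pieces $B_1,\dots,B_t$ of the cycle-hypergraph after detaching $v$, where each $B_i$ is a union of cycles, $v$ is shared among all $B_i$, and each $B_i$ is Eulerian with an induced decomposition $\mathcal C_i$. Order the branches so that their weights $a_1\ge a_2\ge\dots$ satisfy $\sum_i a_i\ge W-w(v)$. There are two subcases.
\begin{itemize}
\item \emph{One branch dominates}, say $a_1\ge W/2$. Apply induction to $B_1$ and obtain $\mathcal C^*_1$. Inside $B_1$, $v$ now has at most the degree allowed, and the loss compared with $W$ is at most a factor $2$. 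This loss has to be absorbed by splitting into a bounded number of dyadic scales; a slack factor of $2$ appears in the statement for this purpose.
\item \emph{Weight is spread over many branches.} Put into $\mathcal C^*$ the recursively obtained collections $\mathcal C^*_i$ for every branch, together with up to $3k$ cycles through $v$ from the heaviest branches, one per branch. Each further branch $B_i$ is then attached to $v$ in $\mathcal C^*\cup\mathcal C_p$ as soon as one of its cycles through $v$ is selected, which happens independently with probability at least $p\ge k^{-\delta}$. Since the $B_i$ share only $v$, the events ``$B_i$ attaches and its recursive event succeeds'' are independent across $i$. Hence the weight of the component of $v$ stochastically dominates a sum of independent variables $a_i^{1-\delta}/2\cdot\mathbf 1[\text{success}_i]$, where success has probability at least $p\,a_i^{-\delta}/2$.
\end{itemize}

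\textbf{Combining the branches.} A convexity and dyadic-pigeonhole argument over the weights $a_i$ should then give the bound. Group the branches by weight scale $a_i\in[2^j,2^{j+1})$ and pick the scale $j$ carrying the most total weight. Within that group of $N_j$ branches, the expected number of successes is about $N_j p\, 2^{-j\delta}/2$. If this is at least $1$, Chernoff gives, with constant probability, about $N_j p\,2^{j(1-2\delta)}$ weight, which should exceed $W^{1-\delta}/2$. If it is less than $1$, at least one success occurs with probability about $N_j p\, 2^{-j\delta}$, giving weight $2^{j(1-\delta)}$. The product of probability and weight is then about $N_j 2^{j(1-2\delta)}p$. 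Here we use the $3k$ deterministic attachments in $\mathcal C^*$, chosen from the heaviest branches, to cover the regime where $p$ alone is too small. The condition $p\ge k^{-\delta}$ should make the first $k$ branches, handled deterministically, and the remaining branches, handled randomly, balance into a total of $W^{-\delta}\cdot W^{1-\delta}$.

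\textbf{Main obstacle.} The step I expect to be hardest is precisely this balancing. The success probability must be at least $W^{-\delta}/2$ \emph{and} the weight at least $W^{1-\delta}/2$ simultaneously, not merely their product. Moreover, the degree budget $6k$ at $v$ must not be exceeded when $v$ also lies in recursive subcollections. I would handle the latter by reserving $3k$ of the budget at $v$ for the recursion and $3k$ for the new attachments, which would explain the constant $6$. For the former, I would first try choosing a single dyadic scale and losing only constant factors, and if the $\log W$ scales cause a loss I would absorb it into the $W^{-\delta}$ slack.
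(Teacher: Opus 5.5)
Your proposal has a genuine gap. The induction as you set it up does not close, and the ``main obstacle'' you flag is exactly where the missing idea lies.

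\textbf{The inductive hypothesis does not compose.} You use the lemma's own conclusion as the inductive hypothesis, and it fails in two ways.

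First, it is not \emph{rooted}. The heavy component promised inside a branch $B_i$ need not contain $v$, nor the selected cycle through $v$. So ``$B_i$ attaches to $v$'' does not attach any of the recursively guaranteed weight to the component of $v$.

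Second, it loses constants at every level. In the dominant-branch case you only get weight $(W/2)^{1-\delta}/2$, a factor $2^{1-\delta}$ short, and this compounds over the depth of the recursion. The dyadic pigeonhole costs a further $\log W$. This cannot be absorbed, because the exponent $\delta$ in the conclusion is the same one as in $p\ge k^{-\delta}$; the factors $\tfrac12$ in the statement are not slack for this. Your own computation shows the degradation: a success probability $a_i^{-\delta}/2$ times a weight $a_i^{1-\delta}/2$ gives branch contributions of order $p\,a_i^{1-2\delta}$, so the exponent drops by $\delta$ per level.

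The fix is to track only the \emph{expectation} of a rooted quantity. Given root cycles $\mathcal R$ meeting every component, find $\mathcal H\subseteq\mathcal C$ with $\mathbb E[w(V)]\ge w(\mathcal C)^{1-\delta}$, where $V$ is the union of the components of $\mathcal C_p\cup\mathcal H$ containing a root. Expectations add over branches. Two elementary inequalities then make the induction lossless:
\[
a^{1-\delta}+b\ge(a+b)^{1-\delta},\qquad x^{1-\delta}+p\sum_i y_i^{1-\delta}\ge\Bigl(x+\sum_i y_i\Bigr)^{1-\delta}.
\]
The second holds when $y_i\le x/k$ and $p\ge k^{-\delta}$, since then $p\,y_i^{-\delta}\ge x^{-\delta}$. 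You convert only once, at the very end, by reverse Markov. Since $0\le w(V)\le W$,
\[
\Pr\bigl[w(V)\ge W^{1-\delta}/2\bigr]\ \ge\ \frac{\mathbb E[w(V)]-W^{1-\delta}/2}{W-W^{1-\delta}/2}\ \ge\ \frac{W^{-\delta}}{2}.
\]
So the product bound is all that is needed. No Chernoff bounds or weight scales are required, and both factors $\tfrac12$ in the statement come from this single step.

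\textbf{The degree budget at a hub is not under control.} Every branch contains $v$, so each recursive $\mathcal C_i^*$ may use up to $6k$ of the degree at $v$. Their union over $t$ branches can then have degree of order $kt$ there. Reserving $3k$ ``for the recursion'' does not help, since the reservation is per branch, and hubs inside different branches interact in the same way.

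The paper resolves this by working with cycles rather than vertices and strengthening the hypothesis:
\begin{itemize}
\item It greedily partitions $\mathcal C$ into connected subcollections $\mathcal K_1,\dots,\mathcal K_t$, each containing exactly one root $R_i$. Each is chosen to lexicographically maximise the new weight, so the new weights are non-increasing and $V(\mathcal K_i)\cap V(\mathcal K_j\setminus\{R_j\})=\emptyset$ for $i<j$.
\item Only $R_1,\dots,R_k$ go into $\mathcal H$ deterministically; the other roots enter only through $\mathcal C_p$.
\item It recurses on $\mathcal K_i\setminus\{R_i\}$, with the cycles meeting $R_i$ as the new roots. It carries along two extra guarantees: at most $k$ roots lie in $\mathcal H$, and each of them meets at most $2k$ cycles of $\mathcal H$.
\end{itemize}
These guarantees yield that every cycle of $\mathcal H$ meets at most $3k$ cycles of $\mathcal H$, hence $\Delta(\mathcal H)\le 6k$.

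Your intuition of treating the $k$ heaviest branches deterministically and the rest with probability $p$ is the right one for the weight bound. Without this rooted, degree-controlled, expectation-based strengthening, however, the argument does not go through.
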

As a first step towards Lemma~\ref{lem: cycles}, it will be useful to prove the following auxiliary ``loaded induction''-type statement first.
\begin{lemma}\label{lem: root cycles}
    Let $\mathcal C$ be a collection of edge-disjoint cycles in some graph, let $w:V(\mathcal{C})\rightarrow \mathbb{Z}_{\ge 0}$ be a non-negative integer weight function on its vertices, and let $\mathcal R \subseteq \mathcal C$ be a subcollection of cycles such that $\mathcal R$ contains at least one cycle in every connected component of $\mathcal C$. Let $k\in \mathbb{N}$, $\delta \in (0,1)$ and $p\in [k^{-\delta},1]$. Then, there exists another subcollection $\mathcal H\subseteq \mathcal C$ meeting each of the following conditions:
    \begin{enumerate}[label=(C\arabic*), ref=(C\arabic*)]
    \item \label{cond:C1} $|\mathcal{H}\cap \mathcal{R}|\le k$,
    \item \label{cond:C2} each cycle in $\mathcal H\cap \mathcal R$ intersects at most $2k$ cycles in $\mathcal H$ in some vertex,
    \item \label{cond:C3} every cycle in $\mathcal H$ intersects at most $3k$ cycles in $\mathcal H$ in some vertex, and, 
    \item \label{cond:C4}\textcolor{white}{hack}

    \vspace{-1.4cm}
    \begin{align}\label{exp: cycle expectation}
        \mathbb E[w(V)]\geq w(\mathcal C)^{1-\delta},
    \end{align}
    where $V$ is the set of vertices contained in connected components of $\mathcal C_p\cup \mathcal H$ which contain at least one cycle of $\mathcal R$.
\end{enumerate}
\end{lemma}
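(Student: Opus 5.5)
The plan is induction on $|\mathcal C|$, or equivalently on $w(\mathcal C)$ together with $|\mathcal C|$, with the statement loaded by the root collection $\mathcal R$ exactly as in Lemma~\ref{lem: root cycles}.

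\textbf{Reduction to one component.} Inequality~\eqref{exp: cycle expectation} is not additive over components: we have $\sum a_i^{1-\delta}\ge(\sum a_i)^{1-\delta}$, but the budget in \ref{cond:C1} is global. So I first handle several components of $\mathcal C$ by passing to roots. If some component contains more than $k$ root cycles, I may discard roots: any subcollection $\mathcal R'\subseteq\mathcal R$ still meeting every component only makes \ref{cond:C1}, \ref{cond:C2} easier, while $V$ can only shrink. The real work is then the following dichotomy for a component.

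\textbf{Base case and Case~1.} For the base case, a single cycle or total weight $0$, take $\mathcal H=\mathcal R$ restricted appropriately; the whole component is then reached deterministically. For the step, fix a root $R_0\in\mathcal R$ and let $\mathcal N$ be the set of cycles meeting $R_0$ in a vertex. In Case~1, $|\mathcal N|\le 2k$. I put $R_0$ into $\mathcal H$ and all of $\mathcal N$ as well, so \ref{cond:C2} holds for $R_0$. I then recurse on $\mathcal C\setminus(\{R_0\}\cup\mathcal N)$, taking as new roots the cycles adjacent to $\mathcal N$ together with the remaining old roots. The vertices of $R_0\cup\mathcal N$ are then always in $V$. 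Components of the remainder attached through $\mathcal N$ are reached deterministically, so their contribution transfers without loss.

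The delicate point in Case~1 is that cycles of $\mathcal N$ are not in $\mathcal R$, so they are constrained only by \ref{cond:C3} with the weaker bound $3k$. To control this, I plan to strengthen the induction hypothesis: roots that were promoted from non-roots carry budget $3k$, with their already-used incidences counted. This is the ``loading''.

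\textbf{Case~2.} Here $|\mathcal N|>2k$, and I use randomness. Put only $R_0$ in $\mathcal H$ and do not include $\mathcal N$. Delete $R_0$, and consider the components of $\mathcal C\setminus\{R_0\}$ grouped by which cycles of $\mathcal N$ they contain; there are many such groups, each rooted at cycles of $\mathcal N$. Apply induction to each group $i$ with weight $w_i$ and a budget of roughly $k$ divided appropriately. A group is connected to $R_0$ whenever its root cycle lies in $\mathcal C_p$, which happens with probability $p\ge k^{-\delta}$, independently of the inner randomness. This gives
\[
\mathbb E[w(V)]\ \ge\ p\sum_i w_i^{1-\delta}.
\]
If there are at least $k$ groups of comparable weight, Hölder gives $\sum_i w_i^{1-\delta}\ge k^{\delta}\bigl(\sum_i w_i\bigr)^{1-\delta}$, and so $p\cdot k^{\delta}\ge 1$ closes the induction. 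If instead a few groups carry most of the weight, I recurse only into the heaviest group and use $\delta<1$ concavity to absorb the loss.

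\textbf{Main obstacle.} I expect the hardest part to be the bookkeeping that makes these two cases compatible. The budgets $k$, $2k$ and $3k$ in \ref{cond:C1}--\ref{cond:C3} must survive the recursion. The groups in Case~2 are not vertex-disjoint a priori: they share vertices of $R_0$ and may be linked through other cycles. Finally, the Hölder step needs the ``many balanced groups versus one dominant group'' split to lose nothing in the exponent. I would first try to choose $R_0$ to be a root maximizing $|\mathcal N|$, and to split weights dyadically, checking that the constants $2k$ and $3k$ are exactly what this scheme consumes.
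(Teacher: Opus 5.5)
There is a genuine gap: your proposal never handles the situation that carries all the difficulty, namely more than $k$ root cycles. With a single root the lemma is immediate. Put $R_0$ into $\mathcal H$ and apply induction once to $\mathcal C\setminus\{R_0\}$, with the cycles meeting $R_0$ as roots. Then (C1) for that call yields (C2) and (C3), and $a^{1-\delta}+b\ge(a+b)^{1-\delta}$ yields (C4). But that call, like every deeper level, has many roots, of which (C1) lets at most $k$ enter $\mathcal H$; the rest are reachable only through $\mathcal C_p$. Choosing which roots to commit to, without any loss in (C4), is the whole content of the lemma, and your proposal does not do it.

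Your reduction paragraph does not address this. Discarding roots also does not make (C1) easier, since $\mathcal H$ may still contain the discarded roots, and they count in $|\mathcal H\cap\mathcal R|$.

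Your Case~2 does not achieve it either. Take $|\mathcal N|=2k+1$ with the cycles of $\mathcal N$ otherwise disjoint. Put weight $1$ on a vertex of each of $m$ of them that lies on no other cycle, weight $0$ elsewhere, and let $p=k^{-\delta}$.
\begin{itemize}
\item For $m=1$, keeping $\mathcal N$ out of $\mathcal H$ gives $\mathbb E[w(V)]=p<1=w(\mathcal C)^{1-\delta}$.
\item For $m=\sqrt k$, the random branch gives $p\sqrt k=k^{1/2-\delta}$, and committing to the heaviest group gives $1+p(\sqrt k-1)$. Both are below $w(\mathcal C)^{1-\delta}=k^{(1-\delta)/2}$ for large $k$. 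Committing to all $m\le k$ weighted cycles, by contrast, respects (C2) and gives $m$.
\end{itemize}
If instead the inductive calls on the groups may put their own roots (cycles of $\mathcal N$) into $\mathcal H$, then $R_0$ can meet up to $k$ of them per group, and (C2) fails unless the calls share one budget. Dividing $k$ among the groups is not available, because the lemma with a smaller $k'$ needs $p\ge k'^{-\delta}$. Finally, (C4) carries no slack and is iterated to unbounded depth, so constant losses from dyadic splitting are not affordable either.

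The missing idea is to interpolate between your two regimes: commit deterministically to the $k$ heaviest root-pieces and leave all lighter ones to $\mathcal C_p$. The paper partitions $\mathcal C$ greedily into connected pieces $\mathcal K_1,\dots,\mathcal K_t$, each containing exactly one root $R_i$. Each $\mathcal K_i$ is chosen to lexicographically maximise (newly covered weight, number of cycles). Maximality gives three things:
\begin{itemize}
\item the pieces partition $\mathcal C$;
\item $V(\mathcal K_i)\cap V(\mathcal K_j\setminus\{R_j\})=\emptyset$ for $i<j$, which resolves your overlap concern and keeps the light pieces away from the committed part;
\item the newly covered weights $w(Y_i)$, where $Y_i=V(\mathcal K_i)\setminus\bigcup_{j<i}V(\mathcal K_j)$, are non-increasing.
\end{itemize}
Then $R_1,\dots,R_{\min\{k,t\}}$ go into $\mathcal H$. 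Induction is applied once to the union of the $\mathcal K_i\setminus\{R_i\}$, $i\le k$, with all cycles meeting these $R_i$ as roots. This single shared budget $k$ gives (C2) as $k+k$ and (C3) as $2k+k$. Induction is applied separately to each $\mathcal K_i\setminus\{R_i\}$, $i>k$, which is attached whenever $R_i\in\mathcal C_p$. Let $x$ be the weight of the first $\min\{k,t\}$ pieces and $y_i=w(Y_i)\le x/k$ for $i>k$. This gives
\[
\mathbb E[w(V)]\ge x^{1-\delta}+p\sum_i y_i^{1-\delta}\ge x^{1-\delta}+pk^{\delta}x^{-\delta}\sum_i y_i\ge\Bigl(x+\sum_i y_i\Bigr)^{1-\delta},
\]
exactly and without case distinction. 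Your Case~1 and the extra ``loading'' then become unnecessary: (C1) and (C2) themselves are the loading.
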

\begin{proof}
     We proceed by induction on $|\mathcal C|$, where the base case ($|\mathcal C|=0$) is trivial. Now suppose we are given a non-empty collection $\mathcal{C}$ of edge-disjoint cycles in some graph and that we have proved the assertion of the lemma for all edge-disjoint cycle collections smaller than $\mathcal{C}$. Let $w,\mathcal{R},k,\delta,p$ be given as in the lemma statement. Our goal in the following is to carry out the induction step, i.e.\ to show that the assertion of the lemma holds for $\mathcal{C}$.

     Let $t=|\mathcal R|$. Let $\mathcal K_1,\ldots,\mathcal K_t$ be disjoint subsets of $\mathcal C$ obtained by iteratively selecting $\mathcal K_i\subseteq \mathcal C\setminus(\cup_{j<i}\mathcal K_j)$ to be a set of cycles lexicographically maximizing $(w(V(\mathcal K_i)\setminus \bigcup_{j<i}V(\mathcal{K}_j)),|\mathcal K_i|)$ under the constraints that $\mathcal K_i$ is connected and $\mathcal K_i$ contains exactly one cycle of $\mathcal R$. Let $R_1,\ldots, R_t$ be the cycles in $\mathcal R$ such that $R_i\in \mathcal K_i$. The following claim introduces a central observation about this partition, which will be used repeatedly.
     \begin{claim}\label{clm: disjointness}
     We have that $\mathcal{K}_1,\ldots,\mathcal{K}_t$ form a partition of $\mathcal C$. Furthermore,  $V(\mathcal K_i)\cap V(\mathcal K_j\setminus \{R_j\})=\emptyset$ for all $1\le i<j\le t$.
     \end{claim}
     \begin{claimproof}
For the first part, suppose towards a contradiction that $\mathcal C\setminus \bigcup_{i=1}^t \mathcal K_i\neq \emptyset$. Since by assumption in the lemma, every component of $\mathcal C$ contains a cycle in $\mathcal R$, and since furthermore every cycle in $\mathcal R$ belongs to exactly one of $\mathcal K_1,\ldots,\mathcal K_t$, it now follows that there must exist some $C\in \mathcal C\setminus \bigcup_{i=1}^t \mathcal K_i$ such that $V(C)\cap V(\mathcal K_i)\neq \emptyset$ for some $1\le i \le t$. Then, $\mathcal{K}_i\cup \{C\}$ is also connected, still contains precisely one cycle from $\mathcal R$, and is disjoint from $\bigcup_{k<i}\mathcal K_k$. Hence, we obtain a contradiction to the maximality of $|\mathcal K_i|$ in our choice. 

The proof of the second part of the claim is analogous:
 Suppose towards a contradiction that there exists $C\in \mathcal K_j\setminus\{R_j\}$ such that $V(C)\cap V(\mathcal K_i)\neq \emptyset$. Then, again, $\mathcal K_i\cup \{C\}$ is connected, contains precisely one cycle from $\mathcal{R}$, and is still disjoint from $\bigcup_{k<i}\mathcal{K}_k$. So, as before, we obtain a contradiction to our choice of $\mathcal{K}_i$.
     \end{claimproof}

    For all $i\in [t]$, let us set $\mathcal K_i^-:= \mathcal K_i\setminus \{R_i\}$ and let $\mathcal R_i\subseteq \mathcal K_i^-$ denote the set of all cycles in $\mathcal{K}_i^-$ which intersect $R_i$ in at least one vertex. Let further $\mathcal K_{\leq k}:=\bigcup_{i=1}^{\min\{k,t\}} \mathcal K_i$, $\mathcal K_{\leq k}^-:=\bigcup_{i=1}^{\min\{k,t\}}\mathcal K_i^-$ and let $\mathcal R_{\leq k}$ denote the set of all cycles in $\mathcal{K}_{\le k}^-$ which share some vertex with at least one of the cycles $\{R_1,\ldots,R_{\min\{k,t\}}\}$.

    We next plan to apply induction to $\mathcal K_{\leq k}^-$ and $\mathcal R_{\leq k}$ to obtain a new collection $\mathcal H_{\leq k}\subseteq \mathcal K_{\le k}^-$, as well as, for each $k+1\leq i\leq t$, to $\mathcal K_i^-$ and $\mathcal R_i$ to obtain a new collection $\mathcal H_i\subseteq \mathcal K_i^-$. 

    Note that by definition, we have $R_1 \in \mathcal C \setminus \bigcup_{i=1}^{t}\mathcal K_i^-$ and $R_i\in \mathcal C\setminus \mathcal K_i^-$ for every $i$, so $|\mathcal{K}_{\le k}^-|<|\mathcal C|$ and $|\mathcal{K}_i^-|<|\mathcal{C}|$ for every $k+1\le i \le t$. Hence, to ensure that these inductive applications are valid, it suffices to verify that for every $i\in [t]$, every connected component of $\mathcal{K}_i^-$ contains at least one cycle in $\mathcal R_i$. Indeed, since $\mathcal{R}_i\subseteq \mathcal{R}_{\le k}$ for every $i\in [\min\{k,t\}]$, this will then also imply that every connected component of $\mathcal{K}_{\le k}^-=\bigcup_{i=1}^{\min\{k,t\}}\mathcal{K}_i^-$ contains at least one cycle from $\mathcal{R}_{\le k}$. 

   So let $i\in [t]$ be given to us arbitrarily. Observe that $\mathcal K_i=\mathcal K_i^-\cup \{R_i\}$ is connected by definition. Hence, every connected component of $\mathcal K_i^-$ intersects $V(R_i)$ and, thus, contains a cycle of $\mathcal R_i$, as desired. 

   Thus, the above-mentioned inductive applications are justified and the arising collections $\mathcal{H}_{\le k}\subseteq \mathcal{K}_{\le k}^-$ and $\mathcal{H}_i\subseteq \mathcal{K}_i^-$ for $k+1\le i\le t$ are well-defined. Finally, we set\begin{align*}
        \mathcal H := \{R_1,\ldots,R_{\min\{k,t\}}\}\cup \mathcal H_{\leq k}\cup \bigcup_{i=k+1}^t\mathcal H_i.
    \end{align*}
     and claim that $\mathcal H$ satisfies the conditions \ref{cond:C1}--\ref{cond:C4} in the lemma. Note that $\mathcal{H}\cap \mathcal{R}=\{R_1,\ldots,R_{\min\{k,t\}}\}$ and so \ref{cond:C1} holds. Also, for every $i\in [\min\{k,t\}]$ we have that $\{R_i\}$ is vertex-disjoint from all cycles in $\mathcal H_{\leq k}\setminus \mathcal R_{\leq k}$ by the definition of $\mathcal R_{\leq k}$ and from $\bigcup_{i=k+1}^t\mathcal H_i$ by Claim~\ref{clm: disjointness}. But $\mathcal H_{\leq k}\cap \mathcal R_{\leq k}$ contains at most $k$ cycles by \ref{cond:C1}, so each $R_i$ intersects at most $\min\{k,t\}+k\le 2k$ cycles in $\mathcal{H}$. This verifies that \ref{cond:C2} holds for $\mathcal{H}$. We move on to \ref{cond:C3}. Consider any $C\in\mathcal H$, we want to prove that $C$ intersects at most $3k$ other cycles in $\mathcal{H}$. To do so, let us distinguish four cases. If $C\in\{R_1,\ldots,R_{\min\{k,t\}}\}$, then the statement follows from \ref{cond:C2}, which we just proved. If $C\in \mathcal{H}_{\le k}\cap \mathcal R_{\leq k}$, then $C$ intersects at most $2k$ cycles in $\mathcal H_{\leq k}$ by induction and $C$ does not intersect $\bigcup_{i=k+1}^t\mathcal H_i$ by Claim~\ref{clm: disjointness}. It follows that $C$ intersects at most $2k+\min\{k,t\}\le 3k$ cycles in $\mathcal H$. Next, suppose $C\in \mathcal H_{\leq k}\setminus\mathcal R_{\leq k}$. Then, $C$ intersects neither $\{R_1,\ldots,R_{\min\{k,t\}}\}$ (by definition of $\mathcal R_{\leq k}$) nor $\bigcup_{i=k+1}^t\mathcal H_i$ (by Claim~\ref{clm: disjointness}). Therefore, we are done by induction. Finally, suppose $C\in \mathcal H_i$ for some $k+1\leq i\leq t$. By Claim~\ref{clm: disjointness}, all cycles in $\mathcal{H}$ which $C$ intersects in some vertex must lie in $\mathcal H_i$, and by induction (using condition \ref{cond:C3}), there are at most $3k$ cycles in $\mathcal{H}_i$ which $C$ intersects. This completes the proof that $\mathcal{H}$ satisfies condition~\ref{cond:C3}.

    It is left to show that $\mathcal{H}$ also satisfies \ref{cond:C4}. 
    Recall that $V\subseteq V(\mathcal C)$ in the lemma statement was defined as the set of all vertices contained in connected components of $\mathcal C_p\cup \mathcal H$ which contain at least one cycle of $\mathcal{R}$. We then have to show that $\mathbb{E}[w(V)]\ge w(\mathcal C)^{1-\delta}$. 

    Using the definitions of $\mathcal{H},\mathcal{H}_{\le k},\mathcal{R}_{\le k}$ and $\mathcal{H}_i, \mathcal{R}_i$, we can observe that each of the following sets of vertices is contained in $V$:
    \begin{itemize}
        \item The connected components of $(\mathcal{K}_{\le k}^-)_p\cup \mathcal{H}_{\le k}$ which contain at least one cycle in $\mathcal{R}_{\le k}$ (note that these components intersect vertices of at least one $R_i$, $i \in [\min\{k,t\}]$, all of which we include in $\mathcal H$) form part of $V$.
        \item For each integer $i$ with $k+1\le i\le t$ and such that $R_i\in \mathcal{C}_p$ (which happens with probability $p$), we have that all connected components of $(\mathcal{K}_i^-)_p\cup \mathcal{H}_i$ which contain at least one cycle in $\mathcal{R}_i$ (and hence intersect $R_i\in \mathcal{R}$) are contained in $V$.
        \item We have $\bigcup_{i=1}^{\min\{k,t\}}V(R_i)\subseteq V$, and each vertex in $\bigcup_{i=k+1}^{t}V(R_i)$ is contained in $V$ with probability at least $p$. 
    \end{itemize}

Pause to note that by Claim~\ref{clm: disjointness}, the sets described in the first two bullets above are pairwise disjoint from one another. Furthermore, by inductive applications of~\ref{cond:C4}, we have that the expected total weight of vertices of the first type is at least $w(\mathcal{K}_{\le k}^-)^{1-\delta}$ and the expected total weight of the vertices of the second type is at least $p\sum_{i=k+1}^{t} w(\mathcal{K}_i^-)^{1-\delta}$. Hence, the expected total weight of vertices of the first and second types is at least
$$w(\mathcal{K}_{\le k}^-)^{1-\delta}+p\sum_{i=k+1}^{t} w(\mathcal{K}_i^-)^{1-\delta}.$$

Finally, observe that the additional weight provided by the third type of vertices described above is, in expectation, at least $w(\mathcal{K}_{\le k})-w(\mathcal{K}_{\le k}^-)+p\cdot w(V(\mathcal C)\setminus V\left(\mathcal{K}_{\le k}\cup \bigcup_{i=k+1}^t\mathcal{K}_i^-\right))$. 
All in all, we thus find that
\begin{align*}
        \mathbb E[w(V)]&\geq w(\mathcal{K}_{\le k}^-)^{1-\delta}+w(\mathcal{K}_{\le k})-w(\mathcal{K}_{\le k}^-)+p\left(\sum_{i=k+1}^{t} w(\mathcal{K}_i^-)^{1-\delta}\right)+p\cdot w\left(V(\mathcal C)\setminus V\left(\mathcal{K}_{\le k}\cup \bigcup_{i=k+1}^t\mathcal{K}_i^-\right)\right) \\
        &\ge w(\mathcal{K}_{\le k})^{1-\delta}+p\left(\left(\sum_{i=k+1}^{t} w(\mathcal{K}_i^-)^{1-\delta}\right)+w\left(V(\mathcal C)\setminus V\left(\mathcal{K}_{\le k}\cup \bigcup_{i=k+1}^t\mathcal{K}_i^-\right)\right)\right), 
    \end{align*}
    where we used the easily verified fact that $a^{1-\delta}+b\ge (a+b)^{1-\delta}$ for all $a,b\in\mathbb{Z}_{\ge 0}$ and $\delta \in (0,1)$.

    In the following, for every $i\in [t]$ let us denote $Y_i:=~V(\mathcal{K}_i)\setminus \bigcup_{j<i}V(\mathcal{K}_j)$. Note that by our definition of $\mathcal{K}_1,\ldots,\mathcal{K}_t$, we then have $w(Y_1)\ge w(Y_2)\ge \cdots\ge w(Y_t)$. Note in addition that the sets $Y_1,\ldots,Y_t$ form a partition of $V(\mathcal{C})$. We can then rewrite $V(\mathcal{K}_{\le k})=Y_1\cup\cdots\cup Y_{\min\{k,t\}}$. Further note that $\bigcup_{i=k+1}^t Y_i=V(\mathcal C)\setminus V(\mathcal{K}_{\le k})$ , and hence  $$V(\mathcal C)\setminus V\left(\mathcal{K}_{\le k}\cup \bigcup_{i=k+1}^t\mathcal{K}_i^-\right)=\bigcup_{i=k+1}^t Y_i\setminus \bigcup_{i=k+1}^{t}V(\mathcal{K}_i^-)=\bigcup_{i=k+1}^{t}(Y_i\setminus V(\mathcal{K}_i^-)),$$ where in the last step we used that $Y_i\cap V(\mathcal{K}_j^-)=\emptyset$ whenever $i\neq j\in [t]$, which follows from Claim~\ref{clm: disjointness} and the definition of $Y_1,\ldots,Y_t$. This allows us to deduce the following lower bound on $\mathbb{E}[w(V)]$:
\begin{align*}\mathbb{E}[w(V)]&\ge w(\mathcal{K}_{\le k})^{1-\delta}+p\left(\left(\sum_{i=k+1}^{t} w(\mathcal{K}_i^-)^{1-\delta}\right)+w\left(V(\mathcal C)\setminus V\left(\mathcal{K}_{\le k}\cup \bigcup_{i=k+1}^t\mathcal{K}_i^-\right)\right)\right)\\
    &=w(\mathcal{K}_{\le k})^{1-\delta}+p\left(\sum_{i=k+1}^{t} w(\mathcal{K}_i^-)^{1-\delta}+\sum_{i=k+1}^t w(Y_i\setminus V(\mathcal{K}_i^-))\right)\\
    &\ge w(\mathcal{K}_{\le k})^{1-\delta}+p\sum_{i=k+1}^t (w(\mathcal{K}_i^-)^{1-\delta}+w(Y_i\setminus V(\mathcal{K}_i^-)))\\
    &\ge w(\mathcal{K}_{\le k})^{1-\delta}+p\sum_{i=k+1}^t w(Y_i)^{1-\delta},
    \end{align*} 
    where we again used the inequality $a^{1-\delta}+b\ge (a+b)^{1-\delta}$ in the last step.

    Before further estimating the above expression, it will be useful to prove the following simple inequality for non-negative real numbers.

    \begin{fact}\label{fact: inequality}
    Let $q\in\mathbb{Z}_{\ge 0}$, $k \in \mathbb{N}$, $\delta\in(0,1)$, and $p\ge k^{-\delta}$ as well as $x, y_1,\ldots,y_q\geq 0$ be real numbers such that $y_i\le \frac{x}{k}$ for each $i\in [q]$. Then,
    $$
    x^{1-\delta}+p\sum_{i=1}^q y_i^{1-\delta}\geq \left(x+\sum_{i=1}^qy_i\right)^{1-\delta}.
    $$
\end{fact}
\begin{claimproof}
  The statement trivially holds when $x=0$, so moving on suppose $x>0$. We then have 
    \begin{align*}
        &x^{1-\delta}+p\sum_{i=1}^q y_i^{1-\delta}\geq x^{1-\delta}+p\sum_{i=1}^q y_i\cdot (x/k)^{-\delta}\\ &\ge \left(x+\sum_{i=1}^{q}y_i\right)\cdot x^{-\delta}\ge \left(x+\sum_{i=1}^qy_i\right)^{1-\delta}.
    \end{align*}    
    \vspace{-0.8cm}

\textcolor{white}{hack}
\end{claimproof}

Let now $x:=w(\mathcal{K}_{\le k})=w(Y_1)+\cdots+w(Y_{\min\{k,t\}})$, $q:=t-\min\{k,t\}$, and $(y_1,\ldots,y_{q}):=\linebreak(w(Y_{k+1}),\ldots,w(Y_t))$. Since $w(Y_i)\le w(Y_{\min\{k,t\}})\le \frac{1}{\min\{k,t\}}x$ for every $k+1\le i \le t$, we can see that the numbers $q,x,y_1,\ldots,y_q$ satisfy the conditions of Fact~\ref{fact: inequality} (also if $k>t$, since then $q=0$ and the conditions are trivially satisfied). Hence, we obtain
 \begin{align*}
        \mathbb{E}[w(V)]&\ge w(\mathcal K_{\leq k})^{1-\delta}+p\sum_{i=k+1}^t w(Y_i)^{1-\delta}=x^{1-\delta}+p\sum_{j=1}^q y_j^{1-\delta}\\
        &\ge \left(x+\sum_{j=1}^q y_j\right)^{1-\delta}=\left(\sum_{i=1}^tw(Y_i)\right)^{1-\delta}=w(\mathcal{C})^{1-\delta},
    \end{align*}
    as desired. This shows that $\mathcal{H}$ also satisfies condition~\ref{cond:C4} of the inductive assertion, concluding the proof of the lemma.
\end{proof}

Lemma~\ref{lem: cycles} can now be straightforwardly deduced from Lemma~\ref{lem: root cycles}.

\begin{proof}[Proof of Lemma~\ref{lem: cycles}]
    Since $w(\mathcal{C})=w(G)>0$ by assumption of the lemma, we have that $\mathcal{C}\neq \emptyset$.
    Pick some cycle $R\in\mathcal C$ arbitrarily, and set $\mathcal{R}:=\{R\}$. Since $G$ is Eulerian and hence connected, we have that $\mathcal{R}$ has at least one cycle in every connected component of $\mathcal{C}$. Hence, we may apply Lemma~\ref{lem: root cycles}, which yields a subcollection of cycles $\mathcal{H}\subseteq \mathcal C$ such that every cycle in $\mathcal{H}$ intersects at most $3k$ cycles in $\mathcal{H}$ in some vertex, and $\mathbb E[w(V)]\geq w(G)^{1-\delta}$, where $V$ is the set of vertices contained in the unique connected component of $\mathcal C_p\cup \mathcal H$ which contains $R$ (and $V:=\emptyset$ if such a component does not exist). Since every cycle has maximum degree two, it follows from the first property of $\mathcal{H}$ that $\Delta(\mathcal H)\leq 6k$. Furthermore, since $V$ in any outcome has total weight at most $w(G)$, it follows by applying Markov's inequality to the non-negative random variable $w(G)-w(V)$ that $$\Pr[w(V)\ge w(G)^{1-\delta}/2]=\Pr\left[w(G)-w(V)\le w(G)-w(G)^{1-\delta}/2\right]$$
    $$=1-\Pr\left[w(G)-w(V)> w(G)-w(G)^{1-\delta}/2\right]\ge 1-\frac{w(G)-\mathbb{E}[w(V)]}{w(G)-w(G)^{1-\delta}/2}$$
    $$=\frac{\mathbb{E}[w(V)]-w(G)^{1-\delta}/2}{w(G)-w(G)^{1-\delta}/2}\ge \frac{w(G)^{1-\delta}/2}{w(G)}=\frac{w(G)^{-\delta}}{2},$$ as desired. This concludes the proof of the lemma.
\end{proof}
\section{Proof of Theorem \ref{thm: main}}\label{sec:thm}
We are finally ready to present the proof of the main technical result of this paper, Theorem~\ref{thm: main}.
 \begin{proof}[Proof of Theorem~\ref{thm: main}]

For convenience, we will only show that provided $\varepsilon>0$ is sufficiently small and rational and $n$ is sufficiently large in terms of $\varepsilon$ such that $n^\varepsilon$ is an integer dividing $n$, we can guarantee a path in $L\cup M$ using at least $n^{1-3\varepsilon}-1$ edges of $M$. It is not difficult to check that this suffices to prove the assertion of the theorem.

Let us now start the proof under these assumptions on $\varepsilon$ and $n$. Suppose towards a contradiction that there is no path in $L\cup M$ using at least $n^{1-3\varepsilon}-1$ edges of $M$.

  Fix some $0<\alpha\ll\varepsilon$ bounded away from zero such that $n^\alpha$ is an integer (the precise conditions on $\alpha$ become clear during the proof). Let $\mathcal I$ be the set of intervals of vertices of the form $\{v_{(i-1)n^\varepsilon+1},\ldots,v_{in^\varepsilon}\}$, $1\leq i\leq 2n^{1-\varepsilon}$. Given a graph $G$ on $V(L)$, we write $G\slash\mathcal I$ for the multigraph obtained by contraction of the intervals (here, we keep loops as well as multiple parallel edges between pairs of vertices, one for each connecting edge of the original intervals). In the following, let $G_M=M\slash\mathcal I$. Note that $G_M$ is a bipartite multigraph with no loops which is $n^{\varepsilon}$-regular. Let $G_L$ be obtained from $L\slash\mathcal I$ by removing the loops. Note that $G_L$ is a cycle.
  \begin{claim}\label{clm:simple}
      There is a simple, spanning subgraph $G'_M$ of $G_M$ satisfying $d(G'_M)\geq n^{\varepsilon}-1$ and $\Delta(G_M')\le n^\varepsilon$.
  \end{claim}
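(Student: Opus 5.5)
The plan is to take $G'_M$ to be the \emph{underlying simple graph} of $G_M$: on the same vertex set, join two contracted intervals by a single edge whenever $G_M$ has at least one edge between them. Then $G'_M$ is spanning and simple, and $\Delta(G'_M)\le \Delta(G_M)=n^\varepsilon$ is immediate. The whole difficulty is the average degree bound, and for it I will use the standing assumption (towards a contradiction) that no path in $L\cup M$ uses at least $n^{1-3\varepsilon}-1$ edges of $M$.

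First I reduce the average degree bound to a count. $G_M$ has $2n^{1-\varepsilon}$ vertices and exactly $|M|=n$ edges. So $d(G'_M)=|E(G'_M)|/n^{1-\varepsilon}$, and it suffices to show that the \emph{excess} $x:=|E(G_M)|-|E(G'_M)|=\sum_{\{I,J\}}(\mathrm{mult}(I,J)-1)$ is at most $n^{1-\varepsilon}$. Suppose $x>n^{1-\varepsilon}$. Let $D$ be the simple bipartite graph of \emph{doubled pairs}, i.e.\ pairs $\{I,J\}$ of multiplicity at least $2$ in $G_M$. Each such pair contributes at most $n^\varepsilon-1$ to the excess, so $|E(D)|> n^{1-2\varepsilon}$. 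Moreover $\Delta(D)\le n^\varepsilon$. Since a bipartite graph with $e$ edges and maximum degree $\Delta$ has a matching of size at least $e/\Delta$ (König), $D$ contains a matching $\{I_1J_1,\dots,I_rJ_r\}$ with $r> n^{1-3\varepsilon}$. Here every $I_j$ is an interval inside $\{v_1,\dots,v_n\}$ and every $J_j$ is inside $\{v_{n+1},\dots,v_{2n}\}$, because $M$ only joins the two halves.

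Next I turn this matching into a long path, which gives the contradiction. Order the pairs so that $I_1,\dots,I_r$ appear in increasing order along $v_1,\dots,v_n$. For each $j$, pick two $M$-edges $a_jb_j$ and $a'_jb'_j$ with $a_j,a'_j\in I_j$, $a_j$ preceding $a'_j$ on $L$, and $b_j,b'_j\in J_j$. The path $P$ starts at $v_1$ and runs forward along $L$ to $a_1$. It crosses to $b_1$, walks along $L$ inside $J_1$ to $b'_1$, crosses back to $a'_1$, and then continues forward along $L$ to $a_2$; it repeats the same manoeuvre for each $j$.

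It remains to check that $P$ is a path. The forward walks in the left half visit disjoint, increasing stretches, since the vertices strictly between $a_j$ and $a'_j$ are simply skipped. Each detour stays inside its own $J_j$, and the $J_j$ are pairwise distinct intervals because we used a matching in $D$. The detours are also disjoint from the left half. Hence $P$ is a path in $L\cup M$ using $2r>n^{1-3\varepsilon}-1$ edges of $M$, contradicting the assumption. Therefore $x\le n^{1-\varepsilon}$, so $|E(G'_M)|\ge n-n^{1-\varepsilon}$ and $d(G'_M)\ge n^\varepsilon-1$.

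The step that needs care is this last one: making the detours genuinely vertex-disjoint. That is exactly why I pass to a matching in $D$ rather than using all doubled pairs, and why the walk inside $I_j$ must exit at the later endpoint $a'_j$.
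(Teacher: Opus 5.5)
Your proof is correct and follows essentially the same route as the paper. Both arguments take the underlying simple graph of $G_M$ and bound the number of pairs of multiplicity at least two: a large matching among these pairs would give a path in $L\cup M$ that detours through each matched pair along two $M$-edges, contradicting the standing assumption. Your version is more explicit about why the detour walk is a genuine path and uses the sharper König bound $e/\Delta$, but these differences are cosmetic.
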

  \begin{claimproof}
      Let $T\subseteq G_M$ be the simple graph connecting two intervals if and only if they are connected by at least $2$ edges in $G_M$. Suppose that $e(T)\geq n^{1-2\varepsilon}$. As the maximum degree of $T$ is at most $n^\varepsilon/2$, we have that $T$ contains a matching $N$ of size at least $\frac{e(T)}{n^{\varepsilon}}\geq n^{1-3\varepsilon}$. Consider the path obtained by altering the subpath $P=v_1,\ldots,v_n$ of $L$ in the following way. For every edge $II'\in N$, where $I$ is an interval on $P$ and $I'$ an interval on $v_{n+1},\ldots, v_{2n}$, we replace the interval $I$ on $P$ with a detour through $I'$ traversing exactly two edges connecting $I$ and $I'$ in $G_M$. Note that since $N\subseteq T$, there are indeed at least two such edges. Then, the final path uses $2|N|\geq n^{1-3\varepsilon}-1$ edges of $M$, contradicting the assumption that there is no such path. Therefore, $e(T)<n^{1-2\varepsilon}$. Let $G'_M\subseteq G_M$ be the simple graph obtained by removing all multiple edges. Clearly, $G_M'$ has maximum degree at most $n^\varepsilon$. Since $e(T)<n^{1-2\varepsilon}$, it follows that when passing from $G_M$ to $G_M'$ we removed at most $n^\varepsilon\cdot n^{1-2\varepsilon}$ edges. As $G_M$ is $n^\varepsilon$-regular on $2n^{1-\varepsilon}$ vertices, it follows that $d(G'_M)\geq n^\varepsilon-1$.
  \end{claimproof}
  We now apply Lemma~\ref{lem: exapnder decomposition} to $G_M'$ with parameters $\varepsilon':=(\log n)^{-56}$, $C':=57$, $n':=|V(G_M')|=2n^{1-\varepsilon}$, $d':=n^\varepsilon$, $c':=\frac{C'(C'-1)}{C'-29}=114$. Pause to note that by Claim~\ref{clm:simple} and since $n$ is sufficiently large, $G_M'$ with these parameters indeed meets the conditions of Lemma~\ref{lem: exapnder decomposition}.
  Let $\mathcal{G}$ be the collection of vertex-disjoint subgraphs of $G_M'$ obtained from the lemma. Then, the following hold:
  \begin{itemize}
      \item every graph $G\in\mathcal{G}$ is a $(\frac{1}{8},114,s_G)$-expander, where $s_G:=\frac{n^\varepsilon}{4(\log |V(G)|)^{114}}$,
      \item every graph $G\in\mathcal{G}$ satisfies $d(G)\ge (1-(\log |V(G)|)^{-28}) n^\varepsilon$ and $\delta(G)\ge d(G)/2$, and
      \item $Z:=V(G_M')\setminus \bigcup_{G\in\mathcal{G}}V(G)$ has size $o(n^{1-\varepsilon})$.
  \end{itemize}

As a short interlude, we introduce some definitions. The vertices of the graphs in $\mathcal G$ really correspond to \emph{intervals} of $L$, though sometimes we associate vertices of $L$ with vertices in $\mathcal G$. To do so, for $G\in\mathcal G$, we write $V_L(G)$ to denote the set of vertices of $V(L)$ which appear in an interval which is a vertex of $G$. A \textit{connecting-system} is a pair $(\mathcal S,F)$ where $\mathcal S\subseteq\mathcal G$ and where $F\subseteq L\cup M$ is a non-empty linear forest whose components are paths of length at least one, and such that the following hold:
\begin{itemize}
    \item $F$ is edge-disjoint from the edges inside any interval $I\in \bigcup_{G\in\mathcal{S}}V(G)$,
    \item the first and last edge of every maximal path in $F$ are contained in $L$,
    \item the endpoints of every maximal path in $F$ are in $\bigcup_{G\in\mathcal{S}}V_L(G)$, and
  \item the internal vertices of every maximal path in $F$ are disjoint from $\bigcup_{G\in\mathcal{S}}V_L(G)$.
\end{itemize}
Given a connecting-system $(\mathcal{S},F)$, we define its \emph{size} as the number of edges in the intersection $F\cap M$.
Let $H_{(\mathcal S,F)}$ be the multigraph (with loops) on vertex-set $\mathcal S$, where for every maximal path in $F$ with endpoints $u$ and $v$ we add a new (possibly parallel or loop) edge $GG'$ where $G,G'\in\mathcal{S}$ are unique such that $u\in V_L(G)$ and $v\in V_L(G')$. We call a connecting-system \emph{Eulerian} if $H_{(\mathcal S,F)}$ is Eulerian. Given some $G\in \mathcal S$, we denote by $X_{(\mathcal S,F)}(G)\subseteq V(G)$ the set of intervals in $V(G)$ which contain a vertex that appears as an endpoint of a maximal path in $F$. For every $G\in \mathcal{G}\setminus \mathcal{S}$, we define $X_{(\mathcal S, F)}(G):=\emptyset$. 

Next, suppose we are given numbers $p\in [0,1]$ and $k\in\mathbb{N}$, as well as a probability distribution\footnote{In practice, instead of a probability distribution over connecting-systems we will usually deal with a random sample from such a distribution. With a slight abuse of terminology, in the following we will not distinguish between this random object and the probability distribution defining it.} over connecting-systems $(\mathcal S,F)$. We then say that the latter is \emph{$(p,k)$-random} if for every $G\in \mathcal{G}$ the set $X_{(\mathcal S,F)}(G)$, where $(\mathcal S,F)$ is sampled according to said distribution, can be coupled to be included in the union of a $p$-random subset of $V(G)$ and a (deterministic) set of size at most $k$. 

Finally, suppose that $(\mathcal S, F)$ is Eulerian and that $\mathcal C$ is an edge-disjoint cycle decomposition of $H_{(\mathcal S,F)}$. For $G\in \mathcal{S}$ we denote by $R_{(\mathcal S,F)}(G,\mathcal C)$ the graph with vertex-set $V_L(G)$, where we add an edge $uv$ for every cycle $C\in\mathcal C$ going through the vertex $G\in \mathcal S$ of $H_{(\mathcal S,F)}$ such that the two maximal paths of $F$ corresponding to the two edges of $C$ incident to $G$ end in $u$ and $v$, respectively. As the cycles in $\mathcal{C}$ are edge-disjoint and $F$ is a linear forest, $R_{(\mathcal S,F)}(G,\mathcal C)$ is a matching.

This ends our notational and terminological interlude, and we now continue with the proof. To start, let us define $F$ as the linear forest obtained from $L$ by removing all edges $uv$ for which there exists some $G\in\mathcal G$ with $u,v\in V_L(G)$, and removing isolated vertices afterwards. We then have that $F$ is a linear forest, all whose components are paths of positive length whose endpoints are vertices in $\bigcup_{G\in\mathcal{G}}V_L(G)$ at the boundary of some interval in $\bigcup_{G\in \mathcal{G}}V(G)$. Note however that not every vertex at the boundary of an interval in $\bigcup_{G\in \mathcal{G}}V(G)$ must appear as an endpoint of a path in $F$, since there may be consecutive intervals along $L$, both belonging to $V(G)$ for the same graph $G\in\mathcal G$. To avoid $F$ being empty when $|\mathcal G|=1$ and the unique graph $G\in\mathcal G$ has $V(G)=\mathcal I$, in that case we let $F$ be a single, arbitrary edge of $L$ which connects two intervals of $\mathcal I$. Pause to note that with this definition, $(\mathcal G,F)$ is a connecting-system of size $0$. Our goal in the following will be to start from $(\mathcal{G},F)$ and modify this connecting-system step-by-step, each time increasing its size, until we find a connecting-system of large size. We will then extract a cycle in $L\cup M$ using many edges of $M$, which, in particular, will give a path with the same property, as desired.
  \begin{claim}
      $(\mathcal G,F)$ is Eulerian.
  \end{claim}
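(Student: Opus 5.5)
We need to show that $H_{(\mathcal G,F)}$ is connected and that every vertex has even degree. The idea is to read the maximal paths of $F$ as the gaps of the cyclic sequence of intervals along $L$ between consecutive runs lying in a common $G\in\mathcal G$, and to trace $L$ once around.

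\emph{Degenerate case.} If $|\mathcal G|=1$ and the unique $G$ satisfies $V(G)=\mathcal I$, then $F$ is a single edge of $L$ joining two intervals of $G$. Hence $H_{(\mathcal G,F)}$ is one vertex carrying a loop, which is connected with degree $2$, so it is Eulerian.

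\emph{General case, the closed walk.} Traverse $L$ cyclically, starting at a vertex inside an interval of some $G_0\in\mathcal G$. This is possible because $\mathcal G$ is non-empty: the vertices of the graphs in $\mathcal G$ cover all but $o(n^{1-\varepsilon})$ intervals. The edges of $L$ fall into two kinds:
\begin{itemize}
\item edges with both ends in $V_L(G)$ for a single $G$; these are removed;
\item all remaining edges; these are kept in $F$.
\end{itemize}
So the traversal alternates between maximal stretches inside some $V_L(G)$ and maximal paths of $F$. Each maximal path of $F$ starts at the last vertex of a stretch in some $V_L(G)$ and ends at the first vertex of the next stretch, in some $V_L(G')$. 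Its internal vertices lie in intervals of $Z$, since any internal vertex in some $V_L(G'')$ would end the path there. Its first and last edges leave or enter an interval of a graph in $\mathcal G$. Consecutive maximal paths of $F$ meet the same $G$ at the two ends of the stretch between them, as one of them ends and the next begins in $V_L(G)$. Therefore the sequence of edges of $H_{(\mathcal G,F)}$ read off during the traversal forms a closed walk $G_0,G_1,\dots,G_r=G_0$.

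\emph{Conclusions from the walk.} Since each maximal path of $F$ is met exactly once, this closed walk uses every edge of $H_{(\mathcal G,F)}$ exactly once. Every vertex therefore has even degree, with a loop counting $2$, as in a closed trail. The traversal visits every interval, hence every $G\in\mathcal G$, so every vertex of $H_{(\mathcal G,F)}$ lies on the walk and the graph is connected.

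\emph{Main obstacle.} The main care needed is the boundary case in which the traversal never leaves a single $G$, i.e.\ $F$ would be empty. This happens exactly when $|\mathcal G|=1$ and there is no $Z$-interval, which is the degenerate case handled separately above. Beyond that, the only point is that each maximal path of $F$ contains at least one edge, which holds because we removed isolated vertices.
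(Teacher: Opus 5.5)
Your proof is correct and is essentially the paper's argument made explicit. The paper identifies each $V_L(G)$ to a single vertex, notes that this is $L$ with disjoint vertex sets identified and loops deleted (hence Eulerian), and observes that the result is a subdivision of $H_{(\mathcal G,F)}$; your cyclic traversal of $L$ is exactly the Euler tour of that quotient, and your separate treatment of the degenerate single-loop case is fine.

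One justification should be sharpened: why no vertex of any $V_L(G'')$ is internal to a path of $F$. Each interval has $n^{\varepsilon}\ge 2$ vertices, so every such vertex has an $L$-neighbour in its own interval and hence an incident removed edge. This is what guarantees that every $G\in\mathcal G$ actually appears on your closed walk, which you need for connectivity.
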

  \begin{claimproof}
     Let $H$ be the multigraph obtained from $F$ by identifying $V_L(G)$ into a single vertex, for every $G\in\mathcal G$. Then, $H$ is a subdivision of $H_{(\mathcal G,F)}$ (note that since $Z$ may be non-empty, ``subdivision'' here is required).
     Since the edge-set of $F$ was obtained from that of $L$ only by removing edges with both endpoints in the same $G\in\mathcal G$, we find that $H$ is obtained from $L$ by identifying some pairwise disjoint sets of vertices and removing some loops. But note that $L$ is Eulerian and both the operations of identifying disjoint subsets of vertices into single vertices, as well as removing loops, do not change Eulerianness. Thus, we find that $H$ is Eulerian. Furthermore, since the subdivision $H$ of $H_{(\mathcal G,F)}$ is Eulerian, then so is $H_{(\mathcal G,F)}$ itself.
  \end{claimproof}
  As $(\mathcal G,F)$ is Eulerian, we have that $H_{(\mathcal G,F)}$  can be decomposed into a collection of edge-disjoint cycles. Fix such a collection $\mathcal C$ of cycles, and let $w:V(H_{(\mathcal G,F)})=\mathcal{G}\rightarrow \mathbb{Z}_{\ge 0}$ be the weight function defined as $w(G):=|V(G)|$ for $G\in\mathcal G$. Let $\mathcal C^*\subseteq \mathcal C$ be the collection of cycles put forth by applying Lemma~\ref{lem: cycles} to the cycle-collection $\mathcal{C}$ with parameters $k:=n^{\alpha}$, $\delta:=\varepsilon$ and $p:=n^{-\varepsilon\alpha}$. Then, $\Delta(\mathcal C^\ast)\le 6k$. Let $\mathcal C_p\subseteq \mathcal C$ be obtained by selecting every cycle in $\mathcal C$ independently with probability $p$. We denote by $R$ a heaviest connected component of $\mathcal C^*\cup \mathcal C_p$ and note that $R$ is an Eulerian multigraph as it is an edge-disjoint union of cycles. Let $\mathcal R:=V(R)$ be the set of vertices of $R$. Then, by the guarantee of the lemma, we have that with probability at least $(|V(G_M')|-|Z|)^{-\delta}/2>n^{-\varepsilon}$, we have $\sum_{G\in \mathcal{R}}|V(G)|=w(R)\ge (|V(G_M')|-|Z|)^{1-\delta}/2=((2-o(1))n^{1-\varepsilon})^{1-\delta}/2\ge n^{1-2\varepsilon}$, for $n$ large enough. Let $F_0\subseteq F$ be the union of all components of $F$ which correspond to edges in $R$. Then, $(\mathcal R,F_0)$ is a connecting-system of size $0$ such that $H_{(\mathcal R,F_0)}=R$ and, consequently, $(\mathcal R,F_0)$ is Eulerian. Furthermore, $(\mathcal{R},F_0)$ has the following important property when interpreted as a probability distribution over connecting-systems.
  \begin{claim}
      $(\mathcal R,F_0)$ is $(2\sqrt{p},6k)$-random.
  \end{claim}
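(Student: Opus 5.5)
We need to show that for each $G\in\mathcal G$, the set $X_{(\mathcal R,F_0)}(G)$ is contained in the union of a $2\sqrt p$-random subset of $V(G)$ and a deterministic set of size at most $6k$. The plan is to describe exactly which intervals of $G$ can serve as endpoints of paths in $F_0$, and then invoke Lemma~\ref{lem: coupling}.

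\textbf{Step 1: the structure of endpoints.} Fix $G\in\mathcal G$. Each edge of $H_{(\mathcal G,F)}$ corresponds to a maximal path of $F$, and each maximal path has two endpoints, each lying in some interval of a graph in $\mathcal G$. Define a multigraph $R_G$ as follows:
\begin{itemize}
\item its vertex-set is the set of intervals $V(G)$;
\item for every cycle $C\in\mathcal C$ through the vertex $G$, and for each passage of $C$ through $G$, add an edge between the two intervals containing the endpoints of the two maximal paths of $F$ that correspond to the consecutive edges of $C$ at $G$.
\end{itemize}
This is the interval-level version of $R_{(\mathcal G,F)}(G,\mathcal C)$. Since $F$ is a linear forest coming from $L$, each interval $I\in V(G)$ is incident to at most two maximal paths of $F$ (one leaving from each boundary of $I$). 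Hence $R_G$ has maximum degree at most $2$, with loops allowed when both boundaries of $I$ are used by the same cycle.

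The interval $I$ lies in $X_{(\mathcal R,F_0)}(G)$ only if some edge of $R_G$ incident to $I$ comes from a cycle of $\mathcal C^*\cup\mathcal C_p$. Indeed, $F_0$ consists of the paths corresponding to edges of $R$, and $R\subseteq \mathcal C^*\cup\mathcal C_p$ is a union of whole cycles of $\mathcal C$, because it is a component of such a union.

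\textbf{Step 2: the deterministic part.} Let $D_G$ be the set of intervals incident to an $R_G$-edge coming from a cycle of $\mathcal C^*$. Since $\Delta(\mathcal C^*)\le 6k$, the vertex $G$ of $H_{(\mathcal G,F)}$ meets at most $6k$ edges of $\mathcal C^*$. Each such edge corresponds to exactly one path endpoint in $V_L(G)$, hence to one interval. Therefore $|D_G|\le 6k$, and $D_G$ is deterministic because $\mathcal C^*$ is fixed.

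\textbf{Step 3: the random part.} Each edge of $R_G$ is attached to a single cycle of $\mathcal C$. A cycle passing through $G$ several times contributes several edges of $R_G$, and these are selected together; this correlation is the main obstacle, since Lemma~\ref{lem: coupling} requires edges to be selected independently.

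To handle it, I would refine $\mathcal C$ at the outset. Split every cycle at repeated vertices so that each cycle of $\mathcal C$ visits each vertex of $H_{(\mathcal G,F)}$ at most once, i.e.\ $\mathcal C$ consists of genuine cycles. Then each cycle contributes at most one edge to $R_G$, distinct edges of $R_G$ come from distinct cycles, and these edges are selected independently with probability $p$.

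Lemma~\ref{lem: coupling} requires $p\le 1/4$, which holds since $p=n^{-\varepsilon\alpha}$ is small. It then couples the set of intervals incident to a selected edge with a $2\sqrt p$-random subset $V_G$ of $V(G)$.

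\textbf{Step 4: independence across $G$ and conclusion.} The couplings for different $G$ should be built simultaneously from the same selection of $\mathcal C_p$. The definition of $(p,k)$-random is per $G$, so only the per-$G$ coupling is needed. Combining the steps gives
\[
X_{(\mathcal R,F_0)}(G)\subseteq D_G\cup V_G,
\]
which proves that $(\mathcal R,F_0)$ is $(2\sqrt p,6k)$-random.
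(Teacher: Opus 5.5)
Your proposal is correct and follows essentially the same route as the paper: you split $X_{(\mathcal R,F_0)}(G)$ into a deterministic part coming from $\mathcal C^*$ (at most $\deg_{\mathcal C^*}(G)\le 6k$ intervals) and a random part coming from $\mathcal C_p$, and you handle the random part by applying Lemma~\ref{lem: coupling} to the maximum-degree-$2$ multigraph $R_{(\mathcal G,F)}(G,\mathcal C)/\mathcal I$. The one superfluous element is the refinement in Step~3: $\mathcal C$ is by definition a decomposition into genuine cycles, so each cycle already passes through $G$ at most once and contributes at most one edge, which is exactly why the edges are sampled independently. Re-choosing $\mathcal C$ would also change the very object the claim is about, so this point is better stated as an observation than as a modification.
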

  \begin{claimproof}
      Note that the paths of $F_0$ can be partitioned into the linear forests $F_{\mathcal C^*}$ and $F_{\mathcal C_p}$ corresponding to the edges of $\mathcal C^*$ and $\mathcal C_p$ respectively. Now consider some arbitrary $G\in \mathcal G$. Then, since $\Delta(\mathcal C^*)\le 6k$, we have that $|X_{(\mathcal R,F_{\mathcal C^*})}(G)|\leq 6k$. Observe that $R_{(\mathcal G,F)}(G,\mathcal C)$ is a matching where the vertices which are not on the boundary of some interval in $V(G)$ have degree $0$. Thus, the graph $R_{(\mathcal G,F)}(G,\mathcal C)\slash \mathcal I$, obtained by identifying intervals into single vertices, is a multigraph of maximum degree at most $2$. Pause to note that since $\mathcal C_p$ contains every cycle of $\mathcal C$ with probability $p$, $X_{(\mathcal R,F_{\mathcal C_p})}(G)$ is a subset of the set of intervals/vertices in $R_{(\mathcal G, F)}(G,\mathcal C)\slash \mathcal I$ which remain incident to at least one edge after subsampling every edge independently with probability $p$. By Lemma~\ref{lem: coupling}, it follows that $X_{(\mathcal R,F_{\mathcal C_p})}(G)$ can be coupled to be included in a $2\sqrt{p}$-random subset of $V(G)$. The assertion of the claim now follows, using that $X_{(\mathcal R, F_0)}(G)=X_{(\mathcal R, F_{\mathcal C^*})}(G)\cup X_{(\mathcal R, F_{\mathcal C_p})}(G)$.
  \end{claimproof}
  In the following, for every $G\in \mathcal G$, let $V_G^\ast\subseteq V(G)$ be a deterministic set with $|V_G^\ast|\le 6k=6n^\alpha$ and $V_G^0\subseteq V(G)$ be a $2\sqrt{p}$-random subset, coupled to $\mathcal{C}_p$ such that in every outcome of the randomness, the inclusion $X_{(\mathcal R, F_0)}(G)\subseteq V_G^\ast \cup V_G^0$ holds. Note that the existence of such a coupling is guaranteed by the previously proved claim. 
  
  Choose $\alpha_0:=2\alpha$ and $\gamma:=\alpha\varepsilon/3$. For sufficiently large $n$, we have $6n^\alpha\le n^{\alpha_0}$ and $2\sqrt p=2n^{-\alpha\varepsilon/2}\le n^{-\gamma}$, so we may couple each $V_G^0$ to be contained in an $n^{-\gamma}$-random superset $\widehat V_G^0\subseteq V(G)$. Now, let $A$ be the event that the probabilistic event described in the statement of Lemma~\ref{lem: expander connections} holds applied to every $G\in\mathcal G$ (with parameters $\alpha_0, \varepsilon,\gamma$ and the associated deterministic and random sets $V_G^\ast, \widehat V_G^0$ taking the role of $V_*$ and $V_0$). Pause to note that Lemma~\ref{lem: expander connections} is indeed applicable here: every $G\in \mathcal G$ is a $(1/8,114,s_G)$-expander with $s_G=\frac{n^\varepsilon}{4(\log |V(G)|)^{114}}$, and our choice of $\alpha$ makes the required inequalities among the constants hold. Combined with a union bound over all $G\in \mathcal{G}$, the lemma guarantees that $A$ happens with probability at least $1-|\mathcal G|\cdot n^{-\varepsilon^2\log n/4}\geq 1-n^{-\varepsilon^2\log n/4+1}$. Also recall that, as established previously, we have $\sum_{G\in \mathcal{R}}|V(G)|\geq n^{1-2\varepsilon}$ with probability at least $n^{-\varepsilon}$. Therefore, for $n$ sufficiently large, with positive probability simultaneously $A$ happens and $\sum_{G\in \mathcal R}|V(G)|\geq n^{1-2\varepsilon}$. In the rest of the proof, let us fix an instance of $\mathcal R$ with these properties. Moving on, we will only use the deterministic properties of this instance and no longer consider any probabilistic events. 

We fix some last piece of notation before the final step of the proof. In the remainder, given a connecting-system $(\mathcal S, T)$, we will use the notation $(\mathcal S,T)\subseteq (\mathcal R,F_0)$ to express that $\mathcal S\subseteq \mathcal R$ and, for every $G\in\mathcal S$, $X_{(\mathcal S,T)}(G)=X_{(\mathcal R,F_0)}(G)$.

  We now prove the following key claim, which allows us to successively modify the connecting-system $(\mathcal{R},F_0)$ while increasing its size, until finally reaching a cycle in $L\cup M$ using a large number of edges in $M$. With this claim at hand, we will then be able to swiftly conclude the proof of the theorem.
  \begin{claim}\label{clm: iterative}
      Let $(\mathcal S,T)\subseteq (\mathcal R,F_0)$ be an Eulerian connecting-system of size $s$. Let $G\in\mathcal S$. Then, either there exists $T'\supseteq T$ such that $(\mathcal S\setminus \{G\},T')\subseteq (\mathcal R,F_0)$ is an Eulerian connecting-system of size at least $s+n^{-\varepsilon}\cdot |V(G)|$ or $\mathcal S=\{G\}$ and there exists a cycle in $L\cup M$ using at least $s+n^{-\varepsilon}\cdot |V(G)|$ edges of $M$.
  \end{claim}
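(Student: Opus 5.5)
The plan is to contract $G$ away by routing, inside the expander $G$, arcs that join up the path-ends of $T$ landing in $V_L(G)$, and to make these arcs long by absorbing many vertices of $G$. Since $(\mathcal S,T)$ is Eulerian, $H_{(\mathcal S,T)}$ has a cycle decomposition $\mathcal C'$. Consider $R_{(\mathcal S,T)}(G,\mathcal C')$, which is a matching on $V_L(G)$. Contracting intervals gives a multigraph on the set $W:=X_{(\mathcal S,T)}(G)$ with maximum degree at most $2$. Because $(\mathcal S,T)\subseteq(\mathcal R,F_0)$, we have $W=X_{(\mathcal R,F_0)}(G)\subseteq V_G^\ast\cup \widehat V_G^0$. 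We are conditioning on the event $A$, so the conclusion of Lemma~\ref{lem: expander connections} holds for $G$ with this $W$ and this auxiliary multigraph $R$, provided $R$ has at least one edge. It does: $G\in\mathcal S$ carries at least one edge of $H_{(\mathcal S,T)}$, since $T$ is non-empty and $H$ is connected. The lemma returns internally disjoint arcs $\mathcal P$ in $G$ with total length at least $|V(G)|n^{-\gamma}/2$. Each edge of $G\subseteq G_M'$ is a single edge of $M$ between two intervals.

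Next I lift the arcs to $L\cup M$. An arc $I_0I_1\cdots I_r$ in $G$ becomes a path in $L\cup M$: enter $I_j$ via an $M$-edge, walk along $L$ inside the interval $I_j$ to the endpoint of the next $M$-edge, and leave. Because the arcs are internally disjoint and we only use intra-interval $L$-edges of intervals of $G$, which $T$ avoids by the definition of a connecting-system, the lifted paths are disjoint from $T$ apart from endpoints. At an endpoint interval $I\in W$, the $T$-paths end at boundary vertices of $I$, and we walk inside $I$ from that boundary vertex to the $M$-edge. The second condition of Lemma~\ref{lem: expander connections} is needed here. Within each cycle $C$ of $R$, all pairings except at $v_C$ are resolved by simply joining the two $T$-ends at that interval along the interval, which uses its internal $L$-edges.

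\emph{This is the step I expect to be most delicate:} the two ends meeting at one interval must be joinable by disjoint sub-walks inside it. Each interval has only two boundary vertices and is a path, so at most two $T$-ends sit at an interval, and the $L$-segment between them plus an $M$-edge attachment point can be ordered. If necessary, I would reserve an internal vertex, using that the interval has $n^\varepsilon$ vertices while $G$ has degree at most $n^\varepsilon$. The resulting $R'$ is a single cycle through the representatives. Combined with the cycle structure of $\mathcal C'$, this means the new forest $T'$, namely $T$ together with the lifted arcs and the intra-interval connectors, forms paths whose ends now lie in other members of $\mathcal S$. In $H_{(\mathcal S\setminus\{G\},T')}$ this amounts to splicing the closed trail through $G$; it keeps every degree even and keeps connectivity, because all cycles through $G$ are merged into one closed trail. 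Hence it is Eulerian.

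I then check the connecting-system conditions for $T'$. The first and last edges are $L$-edges, since the original $T$-path ends are unchanged. Internal vertices avoid the remaining $V_L$, because the new vertices lie in $V_L(G)$. The ends are unchanged, so $X_{(\mathcal S\setminus\{G\},T')}(G')=X_{(\mathcal R,F_0)}(G')$. The size grows by at least $|V(G)|n^{-\gamma}/2\ge n^{-\varepsilon}|V(G)|$, as $\gamma<\varepsilon$. If $\mathcal S=\{G\}$, all $T$-paths start and end in $V_L(G)$, and the same construction closes everything into a single cycle in $L\cup M$ with at least $s+n^{-\varepsilon}|V(G)|$ edges of $M$. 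The ability to choose the pairing so that the result is one cycle, rather than several, is exactly what the $R'$-is-a-cycle guarantee provides.
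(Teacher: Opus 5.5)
Your proposal is correct and follows the paper's proof essentially step for step. The shared steps are: a cycle decomposition of $H_{(\mathcal S,T)}$; the contracted matching $R$ on $W=X_{(\mathcal S,T)}(G)\subseteq V_G^\ast\cup\widehat V_G^0$; Lemma~\ref{lem: expander connections} applied under the event $A$; lifting arcs via $M$-edges and intra-interval $L$-segments; whole-interval connectors; and the $R'$-is-a-cycle property, which merges all cycles through $G$ into one closed trail.

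Two small points should be tightened.
\begin{itemize}
\item Whole-interval connectors are needed at every vertex of $R$ that is not an arc endpoint. This includes degree-$2$ vertices of path components of $R$, not only the non-representative vertices of cycles.
\item Acyclicity of $T'$ when $|\mathcal S|\ge 2$ deserves one line. By connectivity, the merged trail must visit some vertex of $\mathcal S\setminus\{G\}$.
\end{itemize}
Your ``reserve an internal vertex'' fallback is unnecessary. The two $M$-attachment points at $v_C$ are distinct, so they can simply be ordered along the interval.
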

  \begin{claimproof}
    Let $\mathcal C$ be an edge-disjoint cycle decomposition of $H_{(\mathcal S,T)}$. Recall that $X_{(\mathcal S,T)}(G)$ is, by definition, the set of intervals with positive degree in $R_{(\mathcal S,T)}(G,\mathcal C)\slash\mathcal I$. Also, note that, since $(\mathcal S, T)$ is an Eulerian connecting-system, $H_{(\mathcal S, T)}$ is an Eulerian graph and hence there is at least one cycle in $\mathcal C$ passing through $G$. It follows then by definition that $X_{(\mathcal S,T)}(G)\neq \emptyset$ and that the graph $R_{(\mathcal S,T)}(G,\mathcal C)\slash\mathcal I$ has at least one edge. Let us now define $R:=(R_{(\mathcal S,T)}(G,\mathcal C)\slash\mathcal I)[X_{(\mathcal S,T)}(G)]$ as the multigraph obtained from $R_{(\mathcal S,T)}(G,\mathcal C)\slash\mathcal I$ by removing all isolated vertices. Note that $R$ has maximum degree at most two and contains at least one edge since $T$ is non-empty. Recall that we assumed the event $A$ holds. Thus, we can now apply the conclusion of Lemma~\ref{lem: expander connections} to the expander $G$, and the multigraph $R$ with vertex-set $W:=X_{(\mathcal S,T)}(G)\subseteq V_G^\ast\cup \hat V_G^0$. The lemma then yields a collection $\mathcal P$ of internally vertex-disjoint arcs in $G$ with the following properties.
    \begin{itemize}
    \item Every arc $P\in \mathcal P$ has all its internal vertices in $V(G)\setminus W$ and its endpoints in $W$. 
        \item Every cycle $C$ in $R$ has a vertex $v_C$ such that no vertex in $V(C)\setminus \{v_C\}$ is an endpoint of any arc in $\mathcal P$, and the graph $R'$ obtained from $R$ by removing all edges and all vertices of every cycle $C$, except $v_C$, and adding an edge $uv$ for every arc $P\in \mathcal{P}$ with endpoints $u$ and $v$, is a cycle. The latter in particular implies that for every cycle $C$ in $R$, the vertex $v_C$ is the endpoint of exactly \emph{two} arcs in $\mathcal P$ (or one closed arc in $\mathcal P$, in case that $\mathcal P$ only consists of a single arc).
        \item $\mathcal P$ contains a total of at least $|V(G)|\cdot n^{-\gamma}/2\geq n^{-\varepsilon}|V(G)|$ edges.
    \end{itemize}
    Observe that the edges on the arcs in $\mathcal{P}$ are edges in $G\subseteq G_M'$, and hence by definition of $G_M'$ they correspond to edges in the matching $M$. In the following, let us denote by $M_{\mathcal P}\subseteq M$ the submatching of $M$ containing all matching edges corresponding to edges on some arc in $\mathcal P$. By the above, we then have $|M_\mathcal P|\ge n^{-\varepsilon} |V(G)|$.

    Suppose $I\in V(G)$ is an interval which appears as an internal vertex of some arc $P\in\mathcal P$. Then, exactly two vertices of $I$ are incident to edges in $M_{\mathcal P}$, namely to the two edges in $M_{\mathcal P}$ corresponding to the two edges of $P$ incident to $I$. Let $I'\subseteq I$ be the subinterval connecting these two vertices. Let $F\subseteq L\cup M$ be the subgraph obtained as the union of the matching $M_{\mathcal P}$ with the subpath of $L$ induced by the vertices in $I'$ for every interval $I$ which is an internal vertex of one of the arcs in $\mathcal P$. Note that since the arcs in $\mathcal P$ are internally vertex-disjoint, the graph $F$ defined in this way is a linear forest. Note further that the components of $F$ correspond to the arcs in $\mathcal P$, using the same edges of $M$ and intersecting the same intervals, so $F$ also uses at least $n^{-\varepsilon}|V(G)|$ edges of $M$. Let $V\subseteq V(R)$ be the set of intervals which are endpoints of arcs in $\mathcal P$. Note that $V$ can be partitioned into $V_1$, the intervals with degree $1$ in $R$, and $V_2$, the intervals of the form $v_C$ for some cycle $C$ in $R$. As every interval $I$ in $V_1$ has degree $1$ in $R$, there exists exactly one maximal path in $T$ which ends in a vertex $v\in I$. Since every path in $T$ ends in an edge of $L$ while not containing edges of $L$ inside $I$, $v$ must be a boundary vertex of $I$. Let us extend all the paths in $F$ which end in an interval in $V_1$ to the corresponding vertex $v$ at the boundary via the unique connecting subpath of $L$ within that interval. For every path of $F$ which ends in an interval $I\in V_2$, let us also extend this path to a vertex at the boundary of the interval in such a way that it does not intersect the other vertex on $I$ which is an endpoint of a path in $F$. Since every interval in $V_2$ is the endpoint of exactly $2$ paths in $F$, there is a unique way to do this. Let $F'\supseteq F$ be the resulting graph. Finally, let $F''\supseteq F'$ be obtained by adding, for every interval $I\in V(R)\setminus V$, the whole subpath of $L$ induced by $I$ as a path to $F''$. 

    Next, let us observe the following property of $F''$. For every path in $T$ with endpoint $u\in V_L(G)$, there is exactly one path in $F''$ which also ends in $u$. Besides these intersections, the paths in $F''$ and $T$ are disjoint, since all the vertices of $F''$ are in $V_L(G)$ while only the endpoints of $T$ may be in $V_L(G)$. Therefore, $T':=F''\cup T$ is a graph of maximum degree $2$, where every vertex of degree $1$ in $T$ which is not in $V_L(G)$ still has degree $1$ and no vertex in $V_L(G)$ has degree $1$. 

    Consider the graph obtained from $H_{(\mathcal S,T)}$ by replacing the vertex $G$ by the new set of vertices $V(F'')$; replacing every non-loop edge $GG'$ incident to $G$ by the edge $vG'$, where $v\in V(F'')$ is the unique endpoint in $V_L(G)$ of the path in $T$ corresponding to $GG'$; and replacing every loop at $G$ by an edge $uv$, where $u,v\in V(F'')$ are the two endpoints in $V_L(G)$ of the corresponding path in $T$. Let $H$ be the union of this graph and $F''$. We claim that $H$ is a connected graph. Since $H_{(\mathcal S,T)}$ is connected, every connected component of $H$ contains a vertex of $V(F'')$. Recall that every edge in $R$ corresponds to a cycle $C\in \mathcal C$ through $G$. Furthermore, if $u,v$ are the vertices in $G$ which appear on the paths of $T$ corresponding to the edges incident to $G$ in $C$, then we made sure that $u,v\in V(F'')$. For each such pair $uv$, we have that they are connected in $H$ through the cycle $C$. Let $H'$ be obtained from $H$ by adding the edge $uv$ for every such pair $u,v$ obtained from an edge of $R$, and note that $H'$ is connected if and only if $H$ is. Next, consider $H'':=H'[V(F'')]$ and note that if $H''$ is connected then so is $H$ since, as mentioned earlier, every connected component of $H$ (and hence of $H'$) intersects $V(F'')$. But by the properties of $\mathcal P$ guaranteed by Lemma~\ref{lem: expander connections} and the derivation of $F''$ from $\mathcal P$, $H''$ is a spanning cycle and, thus, connected.

    Suppose for a moment that $G$ is the only element of $\mathcal S$. Then, when we added the edge $uv$ when defining $H'$ in the above argument, this was actually already an edge of $H$ and similarly, we did not remove any vertices since $H$ has no other vertices besides $V(F'')$. So, $H=H'=H''$ is a spanning cycle in this case. Replacing every edge of $H$ which corresponds to a path in $T$ by this path we obtain a cycle in $L\cup M$ incorporating all the paths of $T$ as well as all the edges of $M_{\mathcal P}$. Therefore, this cycle contains at least $s+n^{-\varepsilon}|V(G)|$ edges of $M$, concluding the proof of the claim in this case.

    So, moving on, let us assume that $G$ is not the only element of $\mathcal S$. In the rest of the argument, we aim to show that $T'=F''\cup T$ is the desired linear forest, satisfying the properties required by the claim we set out to prove. 

First we prove that $T'$ is a linear forest. We already noted above that $T'$ has maximum degree at most two, so it remains to be shown that $T'$ contains no cycles. Suppose towards a contradiction that $T'$ contains a cycle. Then, this cycle must consist of the union of a non-empty set of maximal paths in $F''$ and a non-empty set of maximal paths in $T$. Moreover, all endpoints of the involved paths must be contained in $V_L(G)$, as any endpoint $v\notin V_L(G)$ of a maximal path in $F''$ or $T$ has degree $1$ in $T'$ and hence cannot be contained in a cycle.

It now follows that the cycle in $T'$ we considered also constitutes a cycle in $H''$, which we have shown is a spanning cycle, and so the cycle we considered must equal that spanning cycle. Since further the maximum degree of $T'$ is at most two, it then follows that every maximal path in $T$ has either no or exactly two endpoints in $V_L(G)$. It now follows that $G$ is an isolated vertex in $H_{(\mathcal S,T)}$ contradicting that we are assuming $\mathcal S\setminus \{G\}\neq \emptyset$ and that $H_{(\mathcal S,T)}$ is Eulerian and hence connected. This contradiction shows that our above assumption was false, indeed, $T'$ does not contain any cycles and, hence, is a linear forest. 

    Recall that the endpoints of maximal paths in $T'$ are exactly the endpoints of paths in $T$ which are not in $V_L(G)$. Furthermore, we did not change the edges incident to these endpoints in $T'$, so they are still edges of the cycle $L$. The internal vertices of the paths in $T'$ are either internal vertices of paths in $T$ or from $V_L(G)$. Therefore, the internal vertices of paths in $T'$ are disjoint from $\bigcup_{G'\in \mathcal S\setminus\{G\}}V_L(G')$. Let us set $\mathcal S':=\mathcal S\setminus\{G\}$. We claim that now, $(\mathcal S',T')$ is a connecting-system: Pause to note that the graph $H$ defined above is a subdivision of $H_{(\mathcal S',T')}$. Thus, since we showed that $H$ is connected, it follows that $H_{(\mathcal S',T')}$ is connected, too. Also, the degree of every vertex in $H_{(\mathcal S',T')}$ is the same as in $H_{(\mathcal S,T)}$, so $(\mathcal S',T')$ is also Eulerian. Furthermore, $X_{(\mathcal S',T')}(G')=X_{(\mathcal S,T)}(G')$ for every $G'\in\mathcal S\setminus \{G\}$ so that $(\mathcal S',T')\subseteq (\mathcal R,F_0)$. Finally, we have that $|T'\cap M|=|T\cap M|+|M_{\mathcal P}|\geq s+n^{-\varepsilon}|V(G)|$. This is one of the two desired outcomes of the claim, and this concludes the proof of the latter.
  \end{claimproof} 
  Let us apply Claim~\ref{clm: iterative} iteratively, starting from the connecting-system $(\mathcal R,F_0)$, until we obtain a cycle in $L\cup M$. Note that this happens exactly after $|\mathcal R|$ applications of the claim. Tracking the size of the connecting-system through this process, we conclude that the resulting cycle contains at least $\sum_{G\in\mathcal R}n^{-\varepsilon}|V(G)|=n^{-\varepsilon}\sum_{G\in \mathcal R}|V(G)|\ge n^{1-3\varepsilon}$ edges of $M$. In particular, there exists a path in $L\cup M$ using at least $n^{1-3\varepsilon}-1$ edges of $M$. This is what we initially set out to prove, and concludes the proof of the theorem.
\end{proof}
\paragraph{Statement of AI use:} ChatGPT 5.6 was used to proofread this paper. All of the ideas and writing are due entirely to the authors.
\bibliographystyle{abbrv}
\bibliography{sources.bib}

@article {expanders,
    AUTHOR = {Letzter, Shoham and Methuku, Abhishek and Sudakov, Benny},
     TITLE = {Nearly {H}amilton cycles in sublinear expanders and
              applications},
   JOURNAL = {J. Lond. Math. Soc. (2)},
  FJOURNAL = {Journal of the London Mathematical Society. Second Series},
    VOLUME = {113},
      YEAR = {2026},
    NUMBER = {2},
     PAGES = {Paper No. e70452, 42},
      ISSN = {0024-6107,1469-7750},
   MRCLASS = {05C45 (05C35 05C48 05C83)},
  MRNUMBER = {5032574},
       DOI = {10.1112/jlms.70452},
       URL = {https://doi.org/10.1112/jlms.70452},
}

@article {groen,
    AUTHOR = {Groenland, Carla and Longbrake, Sean and Steiner, Raphael and
              Turcotte, J\'er\'emie and Yepremyan, Liana},
     TITLE = {Longest cycles in vertex-transitive and highly connected
              graphs},
   JOURNAL = {Bull. Lond. Math. Soc.},
  FJOURNAL = {Bulletin of the London Mathematical Society},
    VOLUME = {57},
      YEAR = {2025},
    NUMBER = {10},
     PAGES = {2975--2990},
      ISSN = {0024-6093,1469-2120},
   MRCLASS = {05C38 (05C45 05C60 68V05)},
  MRNUMBER = {4971882},
       DOI = {10.1112/blms.70134},
       URL = {https://doi.org/10.1112/blms.70134},
}

@article {norin,
    AUTHOR = {Norin, Sergey and Steiner, Raphael and Thomass\'e, St\'ephan
              and Wollan, Paul},
     TITLE = {Small hitting sets for longest paths and cycles},
   JOURNAL = {Proc. Amer. Math. Soc.},
  FJOURNAL = {Proceedings of the American Mathematical Society},
    VOLUME = {154},
      YEAR = {2026},
    NUMBER = {6},
     PAGES = {2319--2335},
      ISSN = {0002-9939,1088-6826},
   MRCLASS = {05C38 (05C69)},
  MRNUMBER = {5065039},
       DOI = {10.1090/proc/17582},
       URL = {https://doi.org/10.1090/proc/17582},
}

@article{devos,
  title={Longer cycles in vertex transitive graphs},
  author={DeVos, Matt},
  journal={preprint arXiv:2302.04255},
doi= {10.48550/arXiv.2302.04255},
  year={2023}
}

@article{ma2025intersections,
  title={Intersections of longest cycles in vertex-transitive and highly connected graphs},
  author={Ma, Jie and Zhao, Ziyuan},
  journal={preprint arXiv:2508.17438},
  year={2025}
}

@incollection {karp,
    AUTHOR = {Karp, Richard M.},
     TITLE = {Reducibility among combinatorial problems},
 BOOKTITLE = {Complexity of computer computations},
    SERIES = {The IBM Research Symposia},
     PAGES = {85--103},
      YEAR = {1972},
   MRCLASS = {68A20},
  MRNUMBER = {378476},
MRREVIEWER = {John\ T.\ Gill},
doi = {10.1007/978-1-4684-2001-2_9}
}

@inproceedings{lovasz1969,
  title={Problem 11},
  author={Lov{\'a}sz, L{\'a}szl{\'o}},
  booktitle={Combinatorial Structures and Their Applications, Proc. Calgary Internat. Conf. on Combinatorial structures and their applications, 1969},
  publisher={Gordon and Breach},
  pages={497},
  year={1970},
}

@article {babai,
    AUTHOR = {Babai, L\'aszl\'o},
     TITLE = {Long cycles in vertex-transitive graphs},
   JOURNAL = {J. Graph Theory},
  FJOURNAL = {Journal of Graph Theory},
    VOLUME = {3},
      YEAR = {1979},
    NUMBER = {3},
     PAGES = {301--304},
   MRCLASS = {05C35 (05C38)},
  MRNUMBER = {542553},
       DOI = {10.1002/jgt.3190030314},
}

@article {cayley-survey-84,
    AUTHOR = {Witte, David and Gallian, Joseph A.},
     TITLE = {A survey: {H}amiltonian cycles in {C}ayley graphs},
   JOURNAL = {Discrete Math.},
  FJOURNAL = {Discrete Mathematics},
    VOLUME = {51},
      YEAR = {1984},
    NUMBER = {3},
     PAGES = {293--304},
   MRCLASS = {05C45 (05C25)},
  MRNUMBER = {762322},
MRREVIEWER = {A.\ Gregory\ Starling},
       DOI = {10.1016/0012-365X(84)90010-4},
}

@incollection {alspach81,
    AUTHOR = {Alspach, Brian},
     TITLE = {The search for long paths and cycles in vertex-transitive
              graphs and digraphs},
 BOOKTITLE = {Combinatorial mathematics, {VIII} ({G}eelong, 1980)},
    SERIES = {Lecture Notes in Math.},
    VOLUME = {884},
     PAGES = {14--22},
 PUBLISHER = {Springer, Berlin},
      YEAR = {1981},
      ISBN = {3-540-10883-1},
   MRCLASS = {05C38},
  MRNUMBER = {641232},
}

@article {kutnar,
    AUTHOR = {Kutnar, Klavdija and Maru{\v si\v c}, Dragan},
     TITLE = {Hamilton cycles and paths in vertex-transitive
              graphs---current directions},
   JOURNAL = {Discrete Math.},
  FJOURNAL = {Discrete Mathematics},
    VOLUME = {309},
      YEAR = {2009},
    NUMBER = {17},
     PAGES = {5491--5500},
      ISSN = {0012-365X,1872-681X},
   MRCLASS = {05-02 (05C25)},
  MRNUMBER = {2548567},
       DOI = {10.1016/j.disc.2009.02.017},
       URL = {https://doi.org/10.1016/j.disc.2009.02.017},
}

@article{bedert,
  title={The {L}ov{\'a}sz conjecture holds for moderately dense {C}ayley graphs},
  author={Bedert, Benjamin and Dragani{\'c}, Nemanja and M{\"u}yesser, Alp and Pavez-Sign{\'e}, Mat{\'\i}as},
  journal={preprint arXiv:2603.08675},
  year={2026}
}

@article{bradac,
  title={Hamiltonicity of regular sublinear expanders},
  author={Brada{\v{c}}, Domagoj and Janzer, Oliver},
  journal={preprint arXiv:2605.15043},
  year={2026}
}

@article {RS,
    AUTHOR = {Rapaport-Strasser, Elvira },
     TITLE = {Cayley color groups and {H}amilton lines},
   JOURNAL = {Scripta Math.},
  FJOURNAL = {Scripta Mathematica},
    VOLUME = {24},
      YEAR = {1959},
     PAGES = {51--58},
      ISSN = {0036-9713},
   MRCLASS = {05.00 (20.00)},
  MRNUMBER = {111702},
MRREVIEWER = {M.\ Fiedler},
}

@article {rankin,
    AUTHOR = {Rankin, R. A.},
     TITLE = {A campanological problem in group theory},
   JOURNAL = {Proc. Cambridge Philos. Soc.},
  FJOURNAL = {Proceedings of the Cambridge Philosophical Society},
    VOLUME = {44},
      YEAR = {1948},
     PAGES = {17--25},
      ISSN = {0008-1981},
   MRCLASS = {20.0X},
  MRNUMBER = {22846},
MRREVIEWER = {H.\ S. M. Coxeter},
       DOI = {10.1017/s030500410002394x},
       URL = {https://doi-org.ezproxy.princeton.edu/10.1017/s030500410002394x},
}

@article{bucic,
  title={Long cycles in vertex transitive digraphs},
  author={Buci{\'c}, Matija and Hendrey, Kevin and Mohar, Bojan and Steiner, Raphael and Yepremyan, Liana},
  journal={preprint arXiv:2602.16333},
  year={2026}
}

@article{draganic,
  title={Hamiltonicity of expanders: optimal bounds and applications},
  author={Dragani{\'c}, Nemanja and Montgomery, Richard and Correia, David Munh{\'a} and Pokrovskiy, Alexey and Sudakov, Benny},
  journal={preprint arXiv:2402.06603},
  year={2024}
}

@article {watkins,
    AUTHOR = {Watkins, Mark E.},
     TITLE = {Connectivity of transitive graphs},
   JOURNAL = {J. Combinatorial Theory},
  FJOURNAL = {Journal of Combinatorial Theory},
    VOLUME = {8},
      YEAR = {1970},
     PAGES = {23--29},
      ISSN = {0021-9800},
   MRCLASS = {05.40},
  MRNUMBER = {266804},
MRREVIEWER = {E.\ A.\ Nordhaus},
}

@article {bondy,
    AUTHOR = {Bondy, J. A. and Locke, S. C.},
     TITLE = {Relative lengths of paths and cycles in {$3$}-connected
              graphs},
   JOURNAL = {Discrete Math.},
  FJOURNAL = {Discrete Mathematics},
    VOLUME = {33},
      YEAR = {1981},
    NUMBER = {2},
     PAGES = {111--122},
      ISSN = {0012-365X,1872-681X},
   MRCLASS = {05C38},
  MRNUMBER = {599075},
MRREVIEWER = {Carsten\ Thomassen},
       DOI = {10.1016/0012-365X(81)90159-X},
       URL = {https://doi.org/10.1016/0012-365X(81)90159-X},
}

@InProceedings{gallai,
  author =	{Gallai, Tibor},
  title =	{{Problem 4}},
  booktitle =	{Proceedings
of the Colloquium Held at Tihany, Hungary, September 1966},
  pages =	{362},
  series =	{},
 
  year =	{1968},
  volume =	{},
  editor =	{Erd\H{o}s, Paul and Katona, Gyulia},
  publisher =	{Academic Press},
  address =	{New York} 
}

@article {walther,
    AUTHOR = {Walther, Hansjoachim},
     TITLE = {{\"U}ber die {N}ichtexistenz eines {K}notenpunktes, durch den
              alle l\"angsten {W}ege eines {G}raphen gehen},
   JOURNAL = {J. Combinatorial Theory},
  FJOURNAL = {Journal of Combinatorial Theory},
    VOLUME = {6},
      YEAR = {1969},
     PAGES = {1--6},
      ISSN = {0021-9800},
   MRCLASS = {05.40},
  MRNUMBER = {236054},
MRREVIEWER = {W.\ Moser},
}

@book {Wal78,
    AUTHOR = {Walther, Hansjoachim},
     TITLE = {Anwendungen der {G}raphentheorie},
 PUBLISHER = {VEB Deutscher Verlag der Wissenschaften, Berlin},
      YEAR = {1979},
     PAGES = {239},
   MRCLASS = {90B10 (05Cxx 90C35)},
  MRNUMBER = {539852},
}

@article {Zam76,
    AUTHOR = {Zamfirescu, Tudor},
     TITLE = {On longest paths and circuits in graphs},
   JOURNAL = {Math. Scand.},
  FJOURNAL = {Mathematica Scandinavica},
    VOLUME = {38},
      YEAR = {1976},
    NUMBER = {2},
     PAGES = {211--239},
      ISSN = {0025-5521,1903-1807},
   MRCLASS = {05C35},
  MRNUMBER = {429645},
MRREVIEWER = {Carsten\ Thomassen},
       DOI = {10.7146/math.scand.a-11630},
       URL = {https://doi.org/10.7146/math.scand.a-11630},
}

@inproceedings{Groetschel84,
  author    = {Martin Gr{\"o}tschel},
  title     = {On intersections of longest cycles},
  booktitle = {Graph theory and combinatorics: proceedings of the Cambridge Combinatorial Conference in honour of Paul Erd{\"o}s},
  editor    = {B{\'e}la Bollob{\'a}s},
  pages     = {171 -- 189},
  year      = {1984},
url={https://www.researchgate.net/publication/266281681_On_intersections_of_longest_cycles}
}

@incollection{Bondy95,
    author = {J. A. Bondy},
    title = {Basic graph theory: paths and circuits},
    booktitle = {Handbook of Combinatorics},
    publisher = {Elsevier},
    year = {1995},
url={https://dl.acm.org/doi/10.5555/233157.233163}
}

@article {Lon21,
    AUTHOR = {Long, Jr., James A. and Milans, Kevin G. and Munaro, Andrea},
     TITLE = {Sublinear longest path transversals},
   JOURNAL = {SIAM J. Discrete Math.},
  FJOURNAL = {SIAM Journal on Discrete Mathematics},
    VOLUME = {35},
      YEAR = {2021},
    NUMBER = {3},
     PAGES = {1673--1677},
      ISSN = {0895-4801,1095-7146},
   MRCLASS = {05C38 (05C40 05C69 05D15)},
  MRNUMBER = {4291372},
MRREVIEWER = {Vahan\ V.\ Mkrtchyan},
       DOI = {10.1137/20M1362577},
       URL = {https://doi.org/10.1137/20M1362577},
}

@article {Kie23,
    AUTHOR = {Kierstead, H. A. and Ren, E. R.},
     TITLE = {Improved upper bounds on longest-path and maximal-subdivision
              transversals},
   JOURNAL = {Discrete Math.},
  FJOURNAL = {Discrete Mathematics},
    VOLUME = {346},
      YEAR = {2023},
    NUMBER = {9},
     PAGES = {Paper No. 113514, 5},
      ISSN = {0012-365X,1872-681X},
   MRCLASS = {05C38 (05C12 05C69)},
  MRNUMBER = {4592301},
MRREVIEWER = {Vahan\ V.\ Mkrtchyan},
       DOI = {10.1016/j.disc.2023.113514},
       URL = {https://doi.org/10.1016/j.disc.2023.113514},
}

@article {Rau14,
    AUTHOR = {Rautenbach, Dieter and Sereni, Jean-S\'ebastien},
     TITLE = {Transversals of longest paths and cycles},
   JOURNAL = {SIAM J. Discrete Math.},
  FJOURNAL = {SIAM Journal on Discrete Mathematics},
    VOLUME = {28},
      YEAR = {2014},
    NUMBER = {1},
     PAGES = {335--341},
      ISSN = {0895-4801,1095-7146},
   MRCLASS = {05C38 (05C70)},
  MRNUMBER = {3174168},
MRREVIEWER = {Haruhide\ Matsuda},
       DOI = {10.1137/130910658},
       URL = {https://doi.org/10.1137/130910658},
}

@article {Chen,
    AUTHOR = {Chen, Guantao and Faudree, Ralph J. and Gould, Ronald J.},
     TITLE = {Intersections of longest cycles in {$k$}-connected graphs},
   JOURNAL = {J. Combin. Theory Ser. B},
  FJOURNAL = {Journal of Combinatorial Theory. Series B},
    VOLUME = {72},
      YEAR = {1998},
    NUMBER = {1},
     PAGES = {143--149},
      ISSN = {0095-8956,1096-0902},
   MRCLASS = {05C38 (05C40)},
  MRNUMBER = {1604717},
MRREVIEWER = {Andreas\ Huck},
       DOI = {10.1006/jctb.1997.1802},
       URL = {https://doi.org/10.1006/jctb.1997.1802},
}

@article {torsten,
    AUTHOR = {Merino, Arturo and M\"utze, Torsten and Namrata},
     TITLE = {Kneser graphs are {H}amiltonian},
   JOURNAL = {Adv. Math.},
  FJOURNAL = {Advances in Mathematics},
    VOLUME = {468},
      YEAR = {2025},
     PAGES = {Paper No. 110189, 80},
      ISSN = {0001-8708,1090-2082},
   MRCLASS = {05C45},
  MRNUMBER = {4876460},
MRREVIEWER = {Peter\ Hor\'ak},
       DOI = {10.1016/j.aim.2025.110189},
       URL = {https://doi.org/10.1016/j.aim.2025.110189},
}

\appendix

\section{Proof of Theorem~\ref{thm:smallhittingset}, assuming Theorem~\ref{thm: main}}
As announced in the introduction, in this section, we give the proof of Theorem~\ref{thm:smallhittingset} assuming Theorem~\ref{thm: main}. As mentioned, the proof of this implication is fully analogous to the proofs of Norin et al.~\cite{norin} and hence is included here only for the sake of completeness, and no claim of originality is made. To allow for a short presentation, we use the following, mostly well-known, auxiliary results explicitly proved by Norin et al.

\begin{lemma}[\cite{norin}, Lemma~4]\label{lem:a}
Let $G$ be a graph, let $L$ be a longest path in $G$, and let $C$ be a cycle in $G$ such that $V(L)\cap V(C)=\emptyset$. Then, there exists a set of at most $\sqrt{2|L|}$ vertices in $G$ which separates $V(L)$ and $V(C)$. 
\end{lemma}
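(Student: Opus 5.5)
The plan is to argue by contradiction via Menger's theorem and then use the maximality of $L$ through a rerouting argument. Let $k$ be the minimum size of a vertex set separating $V(L)$ from $V(C)$; here the separator is allowed to contain vertices of $L$ or $C$. By Menger's theorem there are $k$ pairwise vertex-disjoint $V(C)$–$V(L)$ paths $P_1,\dots,P_k$. We may shorten each one so that it meets $V(L)$ only in its endpoint $x_i$ and $V(C)$ only in its endpoint $y_i$. Since $V(L)\cap V(C)=\emptyset$, each $P_i$ has length at least $1$. The $x_i$ are distinct, and so are the $y_i$. Relabel so that $x_1,\dots,x_k$ appear in this order along $L$. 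It suffices to show $|L|\ge k^2/2$, i.e.\ $k\le\sqrt{2|L|}$; note that $k\le 1$ is trivial.

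\textbf{Key step: a gap bound.} Fix $1\le i<j\le k$. The two arcs of $C$ between $y_i$ and $y_j$ together contain the other $k-2$ endpoints $y_m$ in their interiors. Hence the longer arc $A$ contains at least $\lceil (k-2)/2\rceil$ of them, and so has length at least $k/2$. Now form the path
\[
\text{(start of $L$)} \to x_i \xrightarrow{P_i} y_i \xrightarrow{A} y_j \xrightarrow{P_j} x_j \to \text{(end of $L$)},
\]
which replaces the subpath $L[x_i,x_j]$ by $P_i\cup A\cup P_j$. This is a path because the $P$'s are internally disjoint from $L\cup C$ and from each other. Since $L$ is a longest path, we get
\[
\mathrm{dist}_L(x_i,x_j)\ \ge\ |P_i|+|P_j|+|A|\ \ge\ 2+k/2.
\]

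\textbf{Summing.} Apply the gap bound to the consecutive pairs $(x_i,x_{i+1})$ for $i=1,\dots,k-1$. This gives
\[
|L|\ \ge\ (k-1)\left(\tfrac{k}{2}+2\right)\ =\ \tfrac{k^2+3k-4}{2}\ \ge\ \tfrac{k^2}{2}
\]
for $k\ge 2$. Therefore $k\le\sqrt{2|L|}$, and a minimum separator has the claimed size.

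The main point requiring care is the counting in the arc-length step. Choosing the longer arc of $C$ is exactly what produces the factor $k/2$ per gap, and hence the quadratic lower bound on $|L|$; without it one only gets a linear bound. The remaining details — shortening the Menger paths and checking that the rerouted object is a genuine path — are routine.
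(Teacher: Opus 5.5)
The paper does not prove this lemma; it quotes it from Norin et al.\ as an auxiliary tool, so there is no in-paper argument to compare against. Your proof is correct and self-contained, and it gives exactly the constant $\sqrt{2}$ in the statement. The outline is sound: Menger's theorem in the set form gives $k$ disjoint $V(C)$--$V(L)$ paths, and this separator may use vertices of $V(L)\cup V(C)$, which is the notion the application in Case~1 needs. Rerouting $L$ between consecutive feet $x_i,x_{i+1}$ through $P_i$, an arc of $C$, and $P_{i+1}$ then forces $\mathrm{dist}_L(x_i,x_{i+1})\ge k/2+2$. Summing gives $|L|\ge (k-1)(k/2+2)\ge k^2/2$ for $k\ge 2$.

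One sentence needs repair. The arc of $C$ that is longer in length need not contain $\lceil (k-2)/2\rceil$ of the other feet $y_m$; they could all lie on the short side. The bound you actually use still holds, in either of two ways:
\begin{itemize}
\item The longer arc has length at least $|C|/2\ge k/2$, because $C$ contains the $k$ distinct vertices $y_1,\dots,y_k$.
\item Alternatively, take $A$ to be the arc containing more of the $y_m$ in its interior. It then has length at least $\lceil (k-2)/2\rceil+1=\lceil k/2\rceil$.
\end{itemize}

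Also, the case $k=1$ is ``trivial'' only because $|L|\ge 1$, which you should state. This holds since $G$ contains the cycle $C$ and hence an edge.
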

\begin{lemma}[\cite{norin}, Lemma~5(i)]\label{lem:b}
Let $G$ be a connected graph. Then, $\mathrm{lpt}(G)=1$ or $G$ contains a cycle intersecting all longest paths in $G$.
\end{lemma}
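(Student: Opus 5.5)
The plan is to locate, via the block--cutvertex tree of $G$, a single block that every longest path must pass through. Then, inside that block, I would show that a longest cycle of the block meets every longest path by a length-exchange argument.

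If $G$ has one vertex, or a longest path has length $0$, the statement is trivial. So assume every longest path has at least one edge.

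\textbf{Step 1: a common node of the block--cutvertex tree.} Let $\mathcal T$ be the block--cutvertex tree of $G$. For a longest path $P$, let $T_P$ be the set of nodes of $\mathcal T$ consisting of the blocks containing an edge of $P$ together with the cut vertices lying on $P$. Since a path that leaves a block can only re-enter it through the same cut vertex, $E(P)\cap E(B)$ is a (possibly empty) subpath for each block $B$. From this one checks that $T_P$ induces a subtree of $\mathcal T$.

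Any two longest paths $P,P'$ share a vertex $v$ (the folklore fact). If $v$ is a cut vertex, both subtrees contain the node $v$. Otherwise $v$ lies in a unique block $B$. Every edge at $v$ lies in $B$, and both paths use an edge at $v$, so both subtrees contain $B$.

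Thus the subtrees $\{T_P\}$ pairwise intersect. By the Helly property for subtrees of a tree, they share a common node. If that node is a cut vertex, it lies on every longest path and $\mathrm{lpt}(G)=1$. So assume it is a block $B$, i.e.\ every longest path contains an edge of $B$. If $B$ is a single edge $uv$, then $u$ lies on every longest path and again $\mathrm{lpt}(G)=1$. Hence we may assume $B$ is $2$-connected.

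\textbf{Step 2: a longest cycle of $B$ works.} Let $C$ be a longest cycle in $B$, and let $P$ be a longest path of $G$. Set $Q:=P\cap B$, a subpath with at least one edge and hence at least two vertices. Suppose for contradiction that $V(C)\cap V(Q)=\emptyset$. By Menger's theorem in the $2$-connected graph $B$, there are two vertex-disjoint $V(C)$--$V(Q)$ paths $R_1,R_2$, internally disjoint from $C\cup Q$, with distinct ends $x,y$ on $Q$ (with $x$ before $y$ along $P$) and ends $x',y'$ on $C$. Let $A$ be the longer of the two $x'$--$y'$ arcs of $C$, so that $|A|\ge |C|/2$.

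Replacing the segment $xPy$ of $P$ by $R_1\cup A\cup R_2$ gives a path. This is a genuine path because $R_1,R_2,A\subseteq B$ avoid $P$ outside $Q$: $P$ meets $B$ only in $Q$. Its length is at least
$$|P|-|xPy|+2+|C|/2.$$
Maximality of $P$ then forces $|xPy|\ge |C|/2+2$. On the other hand, $xPy\cup R_2\cup A\cup R_1$ is a cycle in $B$ of length at least $|xPy|+2+|C|/2>|C|$. This contradicts the choice of $C$. Hence $C$ meets every longest path, as required.

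\textbf{Main obstacle.} The delicate point is Step 1: correctly setting up the subtrees $T_P$, and checking that they are connected and pairwise intersecting. This is where the observation that a path meets each block in a subpath, and the handling of non-cut shared vertices, are essential. Step 2 is a routine exchange argument once one notes that $P$ meets $B$ only in $Q$.
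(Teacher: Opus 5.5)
Your proof is correct and complete. The paper itself gives no proof of this lemma: it is imported from Norin et al.~\cite{norin} and used only at the start of the appendix proof of Theorem~\ref{thm:smallhittingset}. There, any cycle meeting all longest paths suffices, because the proof immediately passes to a shortest such cycle. So there is no in-paper argument to compare against, and your write-up is a valid self-contained justification.

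Both steps hold up. In Step~1, $T_P$ is connected for two reasons. Two consecutive edges of $P$ lying in different blocks meet at a cut vertex on $P$, which is adjacent in $\mathcal T$ to both blocks. And every cut vertex on $P$ is adjacent in $\mathcal T$ to the block of an incident edge of $P$. Pairwise intersection via the folklore fact, together with your case distinction and the Helly property, then gives the common node. The endgame (cut vertex, bridge, or $2$-connected block) is handled correctly.

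In Step~2, one point deserves a written argument rather than the paraphrase ``a path that leaves a block can only re-enter it through the same cut vertex.'' That point is the identity $V(P)\cap V(B)=V(Q)$. Let $u\neq u'$ be consecutive visits of $P$ to $V(B)$ that are not joined by an edge of $B$. Then the segment of $P$ between them has all its edges outside $B$. Together with a $u$--$u'$ path in $B$, it forms a cycle containing edges of two different blocks, which is impossible since every cycle lies in a single block. Hence consecutive visits to $V(B)$ are joined by edges of $B$, and $P\cap B$ is exactly the subpath $Q$.

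With this in hand, the rest goes through. The rerouted walk is a genuine path, and Menger applies in $B$ because $|V(C)|\ge 3$ and $|V(Q)|\ge 2$. Your count $|xPy|\ge |C|/2+2$ then yields a cycle of length at least $|C|+4$ in $B$, which is the desired contradiction.
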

\begin{lemma}[\cite{norin}, Lemma~6]\label{lem:disjointpairs}
Let $C$ be a cycle, let $A$ and $B$ be disjoint segments of $C$, and let $a_1,\ldots,a_t\in A$, $b_1,\ldots,b_t\in B$ be distinct. Then, $$\sum_{i=1}^{t}\mathrm{dist}_C(a_i,b_i)\ge \frac{t^2}{2}.$$   
\end{lemma}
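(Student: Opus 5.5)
The plan is to bound each term $\mathrm{dist}_C(a_i,b_i)$ from below by how deep $a_i$ lies inside $A$ plus how deep $b_i$ lies inside $B$, and then use distinctness of the $a_i$ (and of the $b_i$) to show these depths must grow linearly.

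\emph{Depth.} For a vertex $a\in A$, let $d_A(a)$ be the number of vertices of $A$ strictly between $a$ and the nearer end of the segment $A$. Equivalently, $d_A(a)+1$ is the distance along $C$ from $a$ to the nearest vertex of $C$ outside $A$. Define $d_B(b)$ for $b\in B$ in the same way. (If $A$ is all of $C$ minus $B$, the argument is unchanged.)

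\emph{Step 1: a distance bound.} I claim that $\mathrm{dist}_C(a,b)\ge 1+d_A(a)+d_B(b)$ for every $a\in A$ and $b\in B$.
\begin{itemize}
\item A shortest $a$--$b$ path in $C$ is an arc of $C$.
\item It starts inside $A$ and must leave $A$ through one end of the segment. This takes at least $d_A(a)+1$ edges, reaching a vertex outside $A$.
\item Since $A$ and $B$ are disjoint segments, after leaving $A$ the arc enters $B$ through an end vertex of $B$.
\item From that end vertex it needs at least $d_B(b)$ further edges to reach $b$.
\end{itemize}
Adding the two contributions gives the claim.

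\emph{Step 2: counting depths.} Because the $a_i$ are distinct, for each $j\ge 0$ at most two vertices of $A$ have $d_A=j$: one near each end. Hence, after sorting, the $i$-th smallest value of $d_A(a_i)$ is at least $\lfloor (i-1)/2\rfloor$. This gives
$$\sum_{i=1}^t d_A(a_i)\ \ge\ \sum_{i=1}^{t}\left\lfloor \frac{i-1}{2}\right\rfloor=\left\lfloor\frac{(t-1)^2}{4}\right\rfloor \text{ (odd } t),\quad \frac{t^2}{4}-\frac{t}{2}\text{ (even } t).$$
The same bound holds for $\sum_i d_B(b_i)$.

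\emph{Step 3: combining.} Summing Step 1 over $i$ gives
$$\sum_{i=1}^t \mathrm{dist}_C(a_i,b_i)\ \ge\ t+\sum_{i=1}^t d_A(a_i)+\sum_{i=1}^t d_B(b_i).$$
\begin{itemize}
\item For even $t$, the right-hand side is at least $t+2\left(\tfrac{t^2}{4}-\tfrac t2\right)=\tfrac{t^2}{2}$.
\item For odd $t$, it is at least $t+\tfrac{(t-1)^2}{2}=\tfrac{t^2+1}{2}\ge \tfrac{t^2}{2}$.
\end{itemize}
This proves the lemma.

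The only delicate point is Step 1, specifically making sure the arcs inside $A$ and inside $B$ cannot overlap or shortcut each other. This is exactly where the disjointness of the segments $A$ and $B$ is used. The rest is elementary counting.
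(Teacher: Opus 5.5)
Your proof is correct and complete. The per-pair bound $\mathrm{dist}_C(a,b)\ge 1+d_A(a)+d_B(b)$ holds for both $a$–$b$ arcs. Each arc first reaches a vertex outside $A$, at distance at least $d_A(a)+1$ from $a$, and only afterwards (since $A\cap B=\emptyset$) enters $B$ at an end vertex of $B$, from which it needs at least $d_B(b)$ more edges. The fact that at most two vertices of $A$ have each depth then gives exactly $t^2/2$ for even $t$ and $(t^2+1)/2$ for odd $t$.

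The paper does not prove this statement itself; it quotes it as Lemma~6 of Norin et al. So there is no in-paper argument to compare against, and yours is the natural self-contained depth-counting proof.

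One small wording slip: read literally, ``vertices of $A$ strictly between $a$ and the nearer end'' is off by one. It would give the end vertex of $A$ and its neighbour the same depth $0$, which breaks the count in Step~2. You should take your second characterization, $d_A(a)=\mathrm{dist}_C(a,V(C)\setminus A)-1$, as the definition.
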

To state the fourth result from~\cite{norin} that we will use, we need to introduce some additional terminology. 

A \emph{society} consists of a pair $(G,\Omega)$, where $G$ is a graph and $\Omega$ denotes a cyclic order on a subset of vertices of $G$. We use $V(\Omega)$ to denote the subset of vertices appearing in $\Omega$. A \emph{segment} of a society $(G,\Omega)$ is a subset of $V(\Omega)$ which appears contiguously in the cyclic ordering $\Omega$. An \emph{$\Omega$-path} is a path in $G$ with distinct endpoints both in $V(\Omega)$ and all internal vertices outside of $V(\Omega)$. A \emph{transaction} of order $k$ in $(G,\Omega)$ is a set of $k$ pairwise disjoint $\Omega$-paths for which there exist two disjoint segments $A, B$ of $\Omega$ such that each of the $k$ paths has one endpoint in $A$ and the other in $B$. 

The following lemma was implicitly proved by Norin et al., which can be verified by inspecting the proof of Claim~1 within the proof of Theorem~1 of the paper~\cite{norin}. 

\begin{lemma}\label{lem:c}
Let $G$ be a connected graph, and let $C$ be a cycle in $G$ intersecting all longest paths. Let $\Omega$ denote the society in $G$ corresponding to the natural cyclic order on the cycle $C$ in $G$. Let $k\in \mathbb{N}$ be the largest integer such that $(G,\Omega)$ admits a transaction of order $k$. Then, we have 
$$\mathrm{lpt}(G)\le 2k+1.$$
\end{lemma}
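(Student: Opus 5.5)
The plan is to convert the absence of a transaction of order $k+1$ into small separators via Menger's theorem. Then I would use that any two longest paths in a connected graph intersect to force all longest paths to one side of each separator. Finally, a discrete sweeping argument around $C$ should produce two separators whose union, together with a single vertex of $C$, is a longest path transversal of size at most $2k+1$.

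\emph{Step 1 (separators for a split of $\Omega$).} Let $V(\Omega)=A\cup B$ with $A,B$ disjoint nonempty segments. Let $G_{A,B}$ be $G$ with all edges of $C$ deleted, and also all other edges with both ends in $A$, or both ends in $B$, removed from consideration.
\begin{itemize}
\item Every $A$--$B$ path in $G_{A,B}$ contains an $\Omega$-path with one end in $A$ and one in $B$, since $A\cup B=V(\Omega)$.
\item Disjoint $A$--$B$ paths therefore give disjoint $\Omega$-paths between $A$ and $B$, that is, a transaction.
\item Hence at most $k$ disjoint $A$--$B$ paths exist, and Menger yields $X_{A,B}$ with $|X_{A,B}|\le k$ meeting every $A$--$B$ path in $G$ that uses no edge of $C$ joining $A$ to $B$.
\end{itemize}
The two edges of $C$ between $A$ and $B$ are the only remaining connections. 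So if $a,b$ denote one endpoint of each of these two edges, then $G-(X_{A,B}\cup\{a,b\})$ has no path from $A\setminus\{a,b\}$ to $B\setminus\{a,b\}$.

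\emph{Step 2 (orientation).} Every longest path meets $C$. Any two longest paths intersect. Hence, for a fixed split, all longest paths avoiding $Y_{A,B}:=X_{A,B}\cup\{a,b\}$ live on the same side. I would call the split \emph{$A$-heavy} or \emph{$B$-heavy} accordingly, or \emph{neutral} if $Y_{A,B}$ already hits all longest paths. In the neutral case we are done, with a transversal of size $k+2$, which is smaller than $2k+1$ as soon as $k\ge1$; if $k=0$, the Menger separator is empty and a single boundary vertex should suffice, which I would check by hand.

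\emph{Step 3 (sweeping).} Fix the cyclic order $c_1,\dots,c_m$ of $C$. Consider splits in which $A$ grows one vertex at a time: first a single vertex with $B$ the rest, at the end everything but one vertex with $B$ a single vertex. In the first split a longest path avoiding $Y$ cannot lie on the one-vertex side, because that vertex is (essentially) in $Y$; so that split is $B$-heavy. Symmetrically the last split is $A$-heavy. Hence there are consecutive splits $(A,B)$ and $(A',B')=(A\cup\{v\},B\setminus\{v\})$ with the first $B$-heavy and the second $A'$-heavy. I would choose the endpoints $a,b$ of the boundary edges so that both splits share one boundary vertex, namely $v$. A longest path avoiding
\[
X_{A,B}\cup X_{A',B'}\cup\{v\}
\]
must then meet $C$ inside both $B$ and $A'=A\cup\{v\}$. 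Because $v$ is deleted, it would have to connect $A$ to $B\setminus\{v\}$ while avoiding one of the separators, which is impossible. This gives $\mathrm{lpt}(G)\le 2k+1$.

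The main obstacle is the bookkeeping at the boundaries of the segments. The second boundary edge of each split (the far end, which does not move) must also be handled without spending extra vertices, and the argument only works for a transversal of size exactly $2k+1$ if it does. I would try first to handle the far boundary by arguing that the far boundary vertex is common to all splits in the sweep and can be placed on a fixed side. The fallback is to observe that a longest path crossing only through the far $C$-edge is already excluded by the heaviness of the adjacent split.
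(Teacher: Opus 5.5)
\textbf{There is a genuine gap in Step 3, and it comes from Step 1.} Your architecture is the right one: Menger for complementary segments, orienting each split via the pairwise intersection of longest paths, then a discrete sweep. The paper does not reprove this lemma; it refers to Claim~1 of Norin et al. The problem is that deleting $E(C)$ before applying Menger forces you to add the boundary vertices $a,b$. Heaviness of a split is then a statement only about longest paths avoiding $Y_{A,B}=X_{A,B}\cup\{a,b\}$.

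Your transversal $X_{A,B}\cup X_{A',B'}\cup\{v\}$ omits the far boundary vertex. A longest path through the far edge $c_mc_1$ necessarily meets both $Y_{A,B}$ and $Y_{A',B'}$, so neither heaviness hypothesis constrains it, yet it may avoid your transversal. So the deduction that such a path meets $C$ inside both $B$ and $A'$ is unfounded. Neither proposed fix works:
\begin{itemize}
\item Putting the far vertex into every $Y$ forces it into the transversal, giving size $2k+2$.
\item The ``fallback'' is vacuous, since every path through the far edge meets $Y$.
\end{itemize}
There is also a consistency problem. The $A$-heavy/$B$-heavy labels must be computed with one fixed choice of boundary endpoints per split \emph{before} the transition is located. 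Choosing endpoints afterwards so that consecutive splits share $v$ can change the labels.

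\textbf{The missing observation is that no boundary vertices are needed at all.} Under the paper's definition, a single edge of $C$ is itself an $\Omega$-path. So for complementary segments $A,B$, any family of disjoint $A$--$B$ paths in $G$ itself, boundary edges of $C$ included, yields a transaction of the same order; your subpath argument gives this. Menger then gives $Z_{A,B}$ with $|Z_{A,B}|\le k$ meeting every $A$--$B$ path in $G$. Run your sweep with $A_i=\{c_1,\dots,c_i\}$, $B_i=V(C)\setminus A_i$ and these $Z_i$:
\begin{itemize}
\item A neutral split gives the transversal $Z_i$.
\item If split $1$ is $A$-heavy, every longest path avoiding $Z_1$ contains $c_1$, so $Z_1\cup\{c_1\}$ works. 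This case is not automatic, since $Z_1$ may contain $c_2,c_m$ rather than $c_1$. Symmetrically, if split $m-1$ is $B$-heavy, use $Z_{m-1}\cup\{c_m\}$.
\item Otherwise some $B$-heavy split $i$ is followed by an $A$-heavy split $i+1$. Every longest path avoiding $Z_i\cup Z_{i+1}$ then meets $C$ only in $B_i\cap A_{i+1}=\{c_{i+1}\}$. Hence $Z_i\cup Z_{i+1}\cup\{c_{i+1}\}$ is a transversal of size at most $2k+1$, which is exactly where the $+1$ comes from.
\end{itemize}
This also disposes of your $k=0$ case, since any single edge of $C$ is a transaction of order $1$, so $k\ge 1$ always.
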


With these auxiliary statements at hand, we are now ready to present the proof of Theorem~\ref{thm:smallhittingset} assuming Theorem~\ref{thm: main}.

\begin{proof}[Proof of Theorem~\ref{thm:smallhittingset}]
We have to prove that for every fixed $\varepsilon>0$ and every sufficiently large integer $\ell$, every connected graph $G$ on at least two vertices with maximum path length $\ell$ satisfies $\mathrm{lpt}(G)\le \ell^{1/2+\varepsilon}$.  By Lemma~\ref{lem:b}, we have $\mathrm{lpt}(G)=1$, or there exists a cycle in $G$ which intersects all longest paths. Since we clearly have $\mathrm{lpt}(G)\le \ell^{1/2+\varepsilon}$ in the first case, moving on, we may assume that there is a cycle intersecting all longest paths in $G$, and that $C$ is chosen among all such cycles as short as possible. We now distinguish two possible cases.

\textbf{Case~1.} $C$ is not a geodetic cycle (that is, there exists a pair of vertices $x,y$ on $C$ such that $\mathrm{dist}_G(x,y)<\mathrm{dist}_C(x,y)$). It is then not hard to see that there must exist a path $P$ in $G$, with distinct endpoints $a,b\in V(C)$, such that $P$ is internally vertex-disjoint from $C$, and such that $P$ is strictly shorter than the shortest path between $a$ and $b$ on $C$. Let $C_1, C_2$ be the two cycles in $G$ obtained from the union of $P$ and the two segments of $C$ with endpoints $a,b$. Then, we have that both $C_1, C_2$ are shorter than $C$, and hence by our choice of $C$ there must exist longest paths $L_1, L_2$ in $G$ such that $V(L_i)\cap V(C_i)=\emptyset$ for $i=1,2$. By Lemma~\ref{lem:a} there then exist sets $S_1, S_2\subseteq V(G)$, with $|S_1|, |S_2|\le \sqrt{2\ell}$ such that $S_i$ separates $V(L_i)$ and $V(C_i)$ in $G$, for both $i\in \{1,2\}$. Now consider an arbitrary longest path $L$ in $G$. By the choice of $C$, we have that $L$ intersects $C$ and hence $L$ intersects $C_i$ for some $i\in \{1,2\}$. Furthermore, since any two longest paths in a connected graph intersect, we also have that $L$ intersects $V(L_i)$. But then it also must contain a vertex of $S_i$. It follows that $S_1\cup S_2$ is a longest path transversal in $G$, and hence that $\mathrm{lpt}(G)\le |S_1\cup S_2|\le 2\sqrt{2\ell}\le \ell^{1/2+\varepsilon}$, using our assumption that $\ell$ was chosen sufficiently large in the last inequality. This concludes the proof in this first case.

\textbf{Case~2.} $C$ is a geodetic cycle in $G$, i.e., for any two vertices $x,y\in V(C)$, we have $\mathrm{dist}_G(x,y)=\mathrm{dist}_C(x,y)$. Let now $k$ denote the largest integer such that the society $(G,\Omega)$ defined by $V(C)$ equipped with the natural cyclic ordering along the cycle $C$ admits a transaction of order $k$. Then, by Lemma~\ref{lem:c} we have $\mathrm{lpt}(G)\le 2k+1$, and so it remains to upper-bound $k$. 

Let $\mathcal{P}=\{P_1,\ldots,P_k\}$ be a transaction of $(G,\Omega)$ of order $k$ and let $A, B$ be disjoint segments of $\Omega$ such that each path $P_i\in \mathcal{P}$ has one endpoint $a_i$ in $A$ and the other endpoint $b_i$ in $B$.

Let us now define an auxiliary graph $H$ on vertex-set $\{a_1,\ldots,a_k,b_1,\ldots,b_k\}$ whose edges include all pairs $xy$ with $x,y\in V(H)$ such that $x,y$ are consecutive elements of $V(H)$ along the cycle $C$ in $G$, as well as all pairs $a_ib_i, i=1,\ldots,k$. Then, $H$ satisfies the preconditions of Theorem~\ref{thm: main}, which implies that there exists a path $Q$ in the graph $H$ such that $Q$ uses at least $k^{1-o_k(1)}$ edges of the form $a_ib_i$. Now, let $R$ be the path in $G$ obtained from $Q$ by expanding all edges $xy\in E(Q)$ corresponding to consecutive elements of $V(H)$ on the cycle $C$ into the corresponding segment of $C$ in $G$, and replacing all edges of the form $a_ib_i\in E(Q)$ by the path $P_i$. Let $I\subseteq [k]$ denote the set of all indices $i$ such that $a_ib_i\in E(Q)$ (and thus $P_i\subseteq R$). Note that since $C$ is a geodetic cycle of $G$, we have $|P_i|\ge \mathrm{dist}_C(a_i,b_i)$ for every $i\in [k]$. Since the paths $P_i, i\in I$ are pairwise disjoint, we now have
$$\ell\ge |R|\ge \sum_{i\in I}|P_i|\ge \sum_{i\in I}\mathrm{dist}_C(a_i,b_i).$$
It now follows from~Lemma~\ref{lem:disjointpairs} that the last expression in the above inequality chain must be at least $\frac{|I|^2}{2}$, and thus we obtain $\ell\ge \frac{|I|^2}{2}$, and so after rearranging, we obtain that $k^{1-o_k(1)}\le |I|\le \sqrt{2\ell}$. This implies that $k\le \ell^{1/2+o_\ell(1)}$, and hence that $\mathrm{lpt}(G)\le 2k+1\le \ell^{1/2+o_\ell(1)}\le \ell^{1/2+\varepsilon}$ for every sufficiently large $\ell$. This is what we wanted to prove. Having obtained the desired outcome in both Case~1 and~2, we may now conclude the proof.
\end{proof}
\end{document}